\title{Tangent spaces of diffeological spaces and their variants}
\author{Taho Masaki}
\subjclass[2020]{Primary~57P05; Secondary~58A05}
\keywords{diffeology, tangent space, Kan extension}
\address{GRADUATE SCHOOL OF MATHEMATICAL SCIENCES, THE
UNIVERSITY OF TOKYO, 3-8-1 KOMABA, MEGURO-KU, TOKYO, 153-8914,
JAPAN}
\email{taho@ms.u-tokyo.ac.jp}
\newtheorem{thm}{Theorem}[section]
\newtheorem{prop}[thm]{Proposition}
\newtheorem{lemma}[thm]{Lemma}
\newtheorem{cor}[thm]{Corollary}
\newtheorem*{mthm*}{Main Theorem}
\newtheorem*{thm*}{Theorem}
\newtheorem*{prop*}{Proposition}
\theoremstyle{definition}
\newtheorem{defi}[thm]{Definition}
\newtheorem{ex}[thm]{Example}
\newtheorem{remark}[thm]{Remark}
\newcommand{\topsp}{{\mathrm {Top}}}                 
\newcommand{\mfd}{{\mathrm {Mfd}}}                   
\newcommand{\dflg}{{\mathrm {Dflg}}}                 
\newcommand{\encl}{{\mathrm {Eucl}}}                 
\newcommand{\dflgbased}{{\mathrm {Dflg}}_*}          
\newcommand{\enclbased}{{\mathrm {Eucl}}_*}          
\newcommand{\enclzero}{{\mathrm {Eucl}}_0}           
\newcommand{\dflgloc}{{\mathrm {Dflg}}_*^{loc}}      
\newcommand{\enclloc}{{\mathrm {Eucl}_*^{loc}}}      
\newcommand{\enclzeroloc}{{\mathrm {Eucl}_0^{loc}}}  
\newcommand{\vect}{{\mathrm {Vect}}}                 
\newcommand{\dvect}{{\mathrm {DVect}}}               
\newcommand{\vsdcat}{{\mathrm {VSD}}}
\newcommand{\dvscat}{{\mathrm {DVS}}}
\newcommand{\coplotcat}{(X,x)\downarrow\inclloc}
\newcommand{\dfrn}{\mathrm {Dfrn}}
\newcommand{\incl}{i}                                
\newcommand{\inclbased}{i_*}
\newcommand{\inclzero}{i_0}
\newcommand{\inclloc}{i_*^{loc}}                       
\newcommand{\inclzeroloc}{i_0^{loc}}                 
\newcommand{\encltoloc}{\mathop{Germ}}               
\newcommand{\dflgtoloc}{\mathop{Germ}^{D}}
\newcommand{\conti}{C}
\newcommand{\equivrel}{\mathord{\sim}}               
\newcommand{\dom}{\operatorname{dom}}            
\newcommand{\plotdom}[1]{U_{#1}}                      
\newcommand{\coplotdom}[1]{B_{#1}}
\newcommand{\coplotim}[1]{V_{#1}}
\newcommand{\Lan}{\operatorname{Lan}}                   
\newcommand{\Ran}{\operatorname{Ran}}                   
\newcommand{\ad}{\operatorname{ad}}              
\newcommand{\obj}{\operatorname{Obj}}            
\newcommand{\prl}{\operatorname{pr}_1}
\newcommand{\prr}{\operatorname{pr}_2}
\newcommand{\plus}{\operatorname{+}}
\newcommand{\kakeru}{\operatorname{\times}}
\newcommand{\id}{\operatorname{id}}
\newcommand{\nattrans}{\operatorname{\eta}^R}
\newcommand{\nattransexternal}{\operatorname{\eta}}
\newcommand{\precomp}[1]{\circ {#1}}
\newcommand{\del}[1]{{\partial}_{#1}}
\newcommand{\nattransglobal}{\operatorname{\mu}^R}
\newcommand{\nattransglobalexternal}{\operatorname{\mu}}
\newcommand{\hectorbundle}{T^H}
\newcommand{\dvsbundle}{T^{dvs}}
\newcommand{\cross}{\text{{\dag}}}                       
\newcommand{\crosssub}{\cross_{sub}}                 
\newcommand{\crossquot}{\cross_{quot}}               
\newcommand{\halfsub}{H_{sub}}                       
\newcommand{\irrationaltorus}{T_{\theta}^2}          
\newcommand{\locmap}{{\mathrm {Localmap}}}           
\newcommand{\germ}{G}                                
\newcommand{\plots}{{\mathrm {Plots}}}                           
\newcommand{\internalfunctor}{T}                     
\newcommand{\internal}[2]{T_{#1} (#2)}               
\newcommand{\rightfunctor}{\hat{T}^R}
\newcommand{\righttangent}[2]{\hat{T}^R_{#1} (#2)}
\newcommand{\externalfunctor}{\hat{T}}
\newcommand{\external}[2]{\hat{T}_{#1} (#2)}
\newcommand{\vincenttangent}[2]{\hat{T}^V_{#1} (#2)}
\newcommand{\hectorinternal}[2]{T^{H}_{#1} (#2)}
\newcommand{\dvsinternal}[2]{T^{dvs}_{#1} (#2)}
\newcommand{\globalrighttangent}[2]{\hat{\mathbb{T}}^R_{#1} (#2)}
\newcommand{\globalexternal}[2]{\hat{\mathbb{T}}_{#1} (#2)}
\newcommand{\globalrightfunctor}{\hat{\mathbb{T}}^R}
\newcommand{\globalexternalfunctor}{\hat{\mathbb{T}}}
\newcommand{\globalvincenttangent}[2]{\hat{\mathbb{T}}^V_{#1} (#2)}
\newcommand{\smoothmap}[2]{C^{\infty}(#1,#2)}
\newcommand{\smoothlinear}[2]{L^{\infty}(#1,#2)}
\newcommand{\linear}[2]{L(#1,#2)}
\newcommand{\gl}[1]{GL(#1)}
\newcommand{\regular}{regular~}
\newcommand{\frolicher}{Fr\"{o}licher~}
\newcommand{\IZ}{\cite{MR3025051}}
\newcommand{\baez}{\cite{6db4c29e-8954-3d52-84fa-c44d21fd5d68}}
\newcommand{\CW}{\cite{MR3467758}}
\newcommand{\vincent}{\cite{Vincent2008DiffeologicalDG}}
\newcommand{\watts}{\cite{MR3153238}}
\newcommand{\BKW}{\cite{batubenge2023diffeological}}
\newcommand{\SH}{\cite{MR3884529}}
\newcommand{\wu}{\cite{MR3350086}}
\newcommand{\DVS}{\cite{MR4044857}}
\newcommand{\dtop}{\cite{MR3270173}}
\newcommand{\souriau}{\cite{MR0607688}}
\newcommand{\maclane}{\cite{MR1712872}}
\newcommand{\nonnegativeroot}{\cite{MR2200169}}
\newcommand{\italic}[1]{{\it #1}}
\newcommand{\thref}[1]{Theorem~\ref{#1}}
\newcommand{\proref}[1]{Proposition~\ref{#1}}
\newcommand{\defref}[1]{Definition~\ref{#1}}
\newcommand{\exref}[1]{Example~\ref{#1}}
\newcommand{\lemref}[1]{Lemma~\ref{#1}}
\newcommand{\corref}[1]{Corollary~\ref{#1}}
\newcommand{\remref}[1]{Remark~\ref{#1}}
\begin{document}

\begin{abstract}
  Several methods have been proposed to define tangent spaces for diffeological spaces. Among them, the internal tangent functor is obtained as the left Kan extension of the tangent functor for manifolds.
  However, the right Kan extension of the same functor has not been well-studied. In this paper, we investigate the relationship between this right Kan extension and the external tangent space, another type of tangent space for diffeological spaces.
  We prove that by slightly modifying the inclusion functor used in the right Kan extension, we obtain a right tangent space functor, which is almost isomorphic to the external tangent space.
  Furthermore, we show that when a diffeological space satisfies a favorable property called smoothly regular, this right tangent space coincides with the right Kan extension mentioned earlier. 
\end{abstract}

\maketitle

\section{Introduction} \label{intro}
Diffeological spaces were introduced by Souriau \souriau~in the 1980s. They are one of the generalizations of smooth manifolds, which include manifolds with corners, orbifolds, and infinite dimensional manifolds. 
Moreover, diffeological spaces have a lot of types of spaces because diffeological spaces are closed under several operations, such as taking quotient spaces, subspaces, mapping spaces, etc. 
In fact, the category of diffeological spaces is complete, cocomplete, and cartesian closed (Baez-Hoffnung \baez). 
This is also explained in the standard textbook for diffeology by Iglesias-Zemmour \IZ, and we briefly summarize basic properties in Section \ref{section diffeological space}. 
As generalizations of smooth manifolds, differential spaces, and \frolicher spaces are also known. Their definitions and properties are summarized in the Appendix \ref{section differential and frolicher}. 
This appendix is a summarization of Watts \watts~and Batubenge-Karshon-Watts \BKW. 
Familiarity with the definitions and basic properties of these concepts provide better insights when examining the properties of tangent spaces of diffeological spaces and their variants. 
However, since these concepts are not directly essential to the main focus of this paper, they are placed in the appendix. 

Tangent spaces for diffeological spaces have been studied mainly by Vincent \vincent, \IZ~and Christensen-Wu \CW. 
Vincent defined a tangent space of a diffeological space as a vector space of smooth curves divided by an equivalence relation whether the induced smooth derivatives are equal or not (\vincent, Chapter 3). 
Iglesias-Zemmour defined a tangent space of a diffeological space as a subspace of the dual space of the space of 1-forms (\IZ, 6.53), 
where differential forms on diffeological spaces are defined in \IZ, 6.28, without using the notion of tangent spaces. 
Christensen-Wu defined two types of tangent spaces. One of them, which is called the internal tangent space, was defined as 
a vector space of velocity vectors of smooth curves 
(\CW, Definition 3.1). 
The other one is called the external tangent space, which was defined as a vector space of smooth derivations on germs of smooth real-valued functions (\CW, Definition 3.10). 

In \CW, Definition 3.1, it was explained that the internal tangent functor is the left Kan extension of the tangent functor from the category $\enclzero$, 
which is the category of open sets of Euclidean spaces with the origin and based smooth maps between them (defined in \defref{definition euclidean category}). 
On the other hand, the right Kan extension of the same functor remains largely unexplored. 
In the Section \ref{section external}, we investigate the relationship between the external tangent space $\external{x}{X}$ at the point $x$ of a diffeological space $X$ and the right Kan extension. 
To discuss the relationship, we define the category $\enclloc$ in \defref{definition euclidean category} which has the same objects as $\enclzero$ and its morphisms are germs of smooth maps. 
Also, we define the right tangent space $\righttangent{x}{X}$ (\defref{definition external}), which is almost isomorphic to the external tangent space 
(I do not know of any example such that the external tangent space and the right tangent space do not coincide).
In \thref{theorem right tangent space is right Kan extension}, we prove that 
the right tangent space is isomorphic to the image of the right Kan extension of the tangent functor from the category $\enclloc$, not $\enclzero$ 
(the image of the right Kan extension of the tangent functor from $\enclzero$ is called the global right tangent space denoted by $\globalrighttangent{x}{X}$, defined in \defref{definition other external tangent space}). 
\begin{thm*}{\rm\bf\ref{theorem right tangent space is right Kan extension}} \label{a}
  Let $X$ be a diffeological space, and let $x\in X$. Then there is an isomorphism of vector spaces:
  \[
    \Ran_{\inclloc} T^{\enclloc} (X,x)\cong \righttangent{x}{X}.
  \]
\end{thm*}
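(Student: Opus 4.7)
The plan is to compute the right Kan extension via the standard pointwise limit formula and recognise the resulting limit as exactly the defining data of $\righttangent{x}{X}$.

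Since the target category of $T^{\enclloc}$ is complete, the pointwise right Kan extension at $(X,x) \in \dflgloc$ is given by
\[
  \Ran_{\inclloc} T^{\enclloc}(X,x) \;=\; \lim_{((U,0),\phi) \,\in\, (X,x)\downarrow\inclloc} T^{\enclloc}(U,0),
\]
where the comma category $(X,x)\downarrow\inclloc$ has as objects pairs of a pointed Euclidean open set $(U,0)$ together with a germ $\phi : (X,x) \to (U,0)$ in $\dflgloc$, and as morphisms those germs $g : (U,0) \to (V,0)$ in $\enclloc$ satisfying $g \circ \phi = \psi$. Unwinding this limit, an element amounts to a family $(v_\phi)_\phi$ of tangent vectors $v_\phi \in T_0 U$, one for each germ $\phi$, subject to the naturality condition $T_0 g\,(v_\phi) = v_{g \circ \phi}$ for every morphism $g$ in $\enclloc$.

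Next, I would open up the definition of $\righttangent{x}{X}$ from \defref{definition external} and compare it with the above data. I expect the definition to prescribe exactly such a compatible family, so that the required isomorphism is the evident identification of data: the cone property of $\righttangent{x}{X}$ supplies the unique morphism into the limit, while the concrete description of the limit returns a tangent family in the sense of \defref{definition external}. Both assignments are manifestly linear and mutually inverse.

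The main obstacle I anticipate is bookkeeping rather than any substantial computation: carefully matching the indexing and morphism sets of the comma category with the cone data built into $\righttangent{x}{X}$. One subtle point deserves emphasis: the comma category is constructed over $\enclloc$, so its morphisms between target Euclidean spaces are germs rather than global smooth maps. This locality is precisely what distinguishes $\righttangent{x}{X}$ from the global right tangent space $\globalrighttangent{x}{X}$, and it is the locality encoded in $\dflgloc$ that forces the indexing category to be the germ version rather than $\enclzero$. Once this identification is made explicit, the vector-space isomorphism follows without further computation.
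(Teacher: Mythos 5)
Your computation of the right Kan extension as the limit over the comma category $(X,x)\downarrow\inclloc$, giving compatible families $(v_\phi)_\phi$ of tangent vectors, is exactly the paper's starting point. But there is a genuine gap in the second half: you expect Definition \ref{definition external} to present $\righttangent{x}{X}$ as ``exactly such a compatible family,'' so that the isomorphism is an evident identification of data. It is not. A right tangent vector is defined as a linear map $D\colon\germ(X,x)\to\mathbb{R}$ on the algebra of germs of real-valued functions satisfying the Leibniz rule --- a derivation, not a cone over the tangent functor. The entire substance of the theorem is the bridge between these two descriptions, and that bridge is what you have labelled ``bookkeeping.''

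Concretely, two nontrivial steps are missing. First, to pass from a compatible family $v=(v_G)$ to a derivation, one sets $\beta(v)([g,\coplotdom{g}])=v_G([\id_{\mathbb{R}},\mathbb{R}])$ (reading $v_G\in T_{G(x)}\mathbb{R}$ as a derivation on germs at $G(x)$), and must then \emph{prove} linearity and the Leibniz rule for $\beta(v)$. The paper does this by forming, for two germs $G=[g,\coplotdom{g}]$ and $H=[h,\coplotdom{h}]$, the combined object $(G,H)=[(g,h),\coplotdom{g}\cap\coplotdom{h}]$ with target $\mathbb{R}^2$, and exploiting the compatibility of $v$ along the addition map, the multiplication map, and the two projections $\mathbb{R}^2\to\mathbb{R}$; the Leibniz rule for $\beta(v)$ is inherited from the Leibniz rule of the single vector $v_{(G,H)}\in T_{(G(x),H(x))}\mathbb{R}^2$. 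Second, the inverse direction requires the classical identification $\righttangent{u}{U}\cong T_uU$ for a Euclidean open set $U$, used to push a derivation $D$ forward along each germ $g$ to obtain the component $\alpha(D)_G=\rightfunctor(g)(D)$; this is the cone you allude to, but it has to be constructed, not merely invoked. Without these two steps the argument does not close, so as written the proposal establishes the correct shape of the proof but omits its mathematical content.
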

In this theorem, the left side of the isomorphism is the image of the right Kan extension of the tangent functor from $\enclloc$.
See Sections \ref{section internal tangent spaces of diffeological space} and \ref{section external} for details.
By this result, we sometimes simplify the calculations of the right or external tangent spaces. 

Also, we consider the relationship between the right or external tangent spaces and their global versions 
(\defref{definition other external tangent space}, the definition uses global smooth functions, not germs of smooth functions). 
In \thref{theorem global right tangent space of regular space} and \thref{theorem global external tangent space of regular space}, 
we prove that the standard versions of right or external tangent spaces are isomorphic to their global versions if the diffeological space is smoothly regular (\defref{definition smoothly regular}). 
\begin{thm*}{\rm\bf\ref{theorem global right tangent space of regular space}} \label{b}
  Let $X$ be a diffeological space, and let $x\in X$. If $X$ is smoothly regular at $x$, then there is an isomorphism of vector spaces:
  \[
    \globalrighttangent{x}{X}\cong\righttangent{x}{X}.
  \]
\end{thm*}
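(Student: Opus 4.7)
The plan is to construct a natural comparison map between $\globalrighttangent{x}{X}$ and $\righttangent{x}{X}$ and then use smooth regularity at $x$ to upgrade it to an isomorphism. By \thref{theorem right tangent space is right Kan extension}, both sides are right Kan extensions of tangent functors that differ only in the source category ($\enclzero$ versus $\enclloc$). The canonical functor $\enclzero \to \enclloc$, identity on objects and sending each based smooth map to its germ at the origin, induces a functor between the corresponding slice categories, and hence a canonical restriction map $\righttangent{x}{X} \to \globalrighttangent{x}{X}$ that takes a germ-indexed compatible family to its global-indexed subfamily. The goal is to show that this restriction map is an isomorphism when $X$ is smoothly regular at $x$.

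Unwinding the right Kan extension formula, a right (respectively global right) tangent vector at $x$ is a system of compatible tangent vectors indexed by germs at $x$ of based smooth maps $(X,x) \to (U,0)$ (respectively, by global based smooth maps $(X,x) \to (U,0)$). Smooth regularity at $x$ should yield, for every germ $[h]$ of a based smooth map, a global based smooth representative $\tilde{h}: X \to U$ that agrees with a chosen local representative on some D-open neighborhood of $x$. Granted this, injectivity of the comparison map is immediate: the compatibility relation $v_{[h]} = v_{[\tilde{h}]}$ forces the values on germs to be determined by the values on global maps. For surjectivity, I would build an inverse by extending: given a global right tangent vector $v$, declare $\tilde{v}_{[h]} := v_{\tilde{h}}$ for any global extension $\tilde{h}$ of $[h]$.

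The main obstacle is well-definedness of the inverse: if $\tilde{h}_1$ and $\tilde{h}_2$ are global based smooth maps with $[\tilde{h}_1]_x = [\tilde{h}_2]_x$, one must prove $v_{\tilde{h}_1} = v_{\tilde{h}_2}$. Using naturality of $v$ with respect to the smooth subtraction $U \times U \to U$, this reduces to showing $v_g = 0$ for every global based smooth map $g: (X,x) \to (U,0)$ that vanishes on some D-open neighborhood $W$ of $x$. Here smooth regularity is used in bump-function form: it furnishes a global smooth $\rho: X \to \mathbb{R}$ with $\rho(x) = 0$ and $\rho \equiv 1$ outside $W$, so that $g = \rho \cdot g$ holds globally. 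The map $x \mapsto (\rho(x), g(x))$ is a based smooth map $X \to \mathbb{R} \times U$, and $g$ factors as its composition with the smooth multiplication $\mathbb{R} \times U \to U$, whose derivative at $(0,0)$ is zero. Functoriality of $v$ then gives $v_g = 0$, as required. The final step is to check that the resulting assignment $[h] \mapsto \tilde{v}_{[h]}$ is compatible with germs of smooth maps between Euclidean opens, which should follow formally from the compatibility of $v$ with the corresponding global smooth maps after choosing extensions; and that the construction is natural in $(X,x)$, so that the isomorphism is canonical.
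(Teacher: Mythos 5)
Your argument is correct and is essentially the paper's own proof transported through the Kan-extension descriptions of \thref{theorem right tangent space is right Kan extension} and \proref{proposition global right tangent is right Kan extension}: the comparison map is restriction along germs, the inverse is built from the bump-function extension lemma (\lemref{lemma existence of 0 extension dflg space}), and your well-definedness step ($v_g=0$ for $g$ vanishing near $x$, via $g=\rho\cdot g$ and the vanishing derivative of multiplication at the origin) is exactly the paper's Leibniz-rule computation $0=D(\lambda h)=D(h)$ in disguise. One small point to tidy: the maps $\mathbb{R}\times U\to U$ and $U\times U\to U$ you invoke are not defined for a general Euclidean open $U$, so you should first push each component forward along the inclusion $U\hookrightarrow\mathbb{R}^n$ (an isomorphism on tangent spaces at every point), reducing to codomain $\mathbb{R}^n$ where multiplication and subtraction make sense, after which the argument goes through.
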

\begin{thm*}{\rm\bf\ref{theorem global external tangent space of regular space}} \label{c}
  Let $X$ be a diffeological space, and let $x\in X$. If $X$ is smoothly regular at $x$, then there is an isomorphism of vector spaces:
  \[
    \globalexternal{x}{X}\cong\external{x}{X}.
  \]
\end{thm*}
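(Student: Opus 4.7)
The plan is to exhibit a natural restriction map
\[
  \Phi\colon \external{x}{X}\longrightarrow \globalexternal{x}{X}
\]
and verify that it is a linear isomorphism under the smoothly regular hypothesis. The map $\Phi$ is induced by precomposition with the germ-formation morphism $\rho\colon \smoothmap{X}{\mathbb{R}}\to C^\infty_x(X,\mathbb{R})$; for a derivation $D$ on germs, $\Phi(D)(f):=D([f]_x)$ is automatically a derivation on global smooth functions. The strategy is then to promote smooth regularity into a dictionary between germs at $x$ and global smooth functions modulo ``vanishing near $x$''.

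The key technical input in both directions is a pair of immediate consequences of smooth regularity at $x$ in the sense of \defref{definition smoothly regular}: first, for every D-open neighborhood $U\ni x$ there exists a smooth bump $\chi\colon X\to\mathbb{R}$ which is identically $1$ on some smaller neighborhood of $x$ and vanishes outside $U$; second, every germ $[f]_x$ represented by a locally defined smooth $f\colon U\to\mathbb{R}$ therefore admits a global representative $\tilde f:=\chi f$ (extended by $0$) satisfying $[\tilde f]_x=[f]_x$. Injectivity of $\Phi$ is then immediate: if $\Phi(D)=0$, then for any germ $[f]_x$ we have $D([f]_x)=D([\tilde f]_x)=\Phi(D)(\tilde f)=0$. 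For surjectivity, given $D'\in\globalexternal{x}{X}$ I define $D([f]_x):=D'(\tilde f)$ for an arbitrary global extension $\tilde f$, and verify that this yields a well-defined derivation on germs whose image under $\Phi$ is $D'$.

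The main obstacle is showing that the assignment $[f]_x\mapsto D'(\tilde f)$ is independent of the choices: if $\tilde f_1,\tilde f_2$ are two global smooth functions with $[\tilde f_1]_x=[\tilde f_2]_x$, then $D'(\tilde f_1)=D'(\tilde f_2)$. This reduces to the standard locality argument for derivations. The difference $g:=\tilde f_1-\tilde f_2$ vanishes on some D-open neighborhood $V$ of $x$, and smooth regularity produces a bump $\chi$ with $\chi\equiv 1$ near $x$ and $\chi\equiv 0$ outside $V$, so that the identity $g=(1-\chi)\cdot g$ holds globally as smooth functions. Applying the Leibniz rule at $x$ gives
\[
  D'(g)=(1-\chi(x))\,D'(g)+g(x)\,D'(1-\chi)=0.
\]
Once this locality is in hand, linearity and the Leibniz rule for $D$ follow directly from those of $D'$, and $\Phi(D)=D'$ is automatic, completing the bijection. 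The argument is formally parallel to that of \thref{theorem global right tangent space of regular space}, so I would isolate the cutoff/locality lemma once and invoke it uniformly in both proofs.
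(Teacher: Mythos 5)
Your argument establishes, essentially verbatim, the isomorphism $\globalrighttangent{x}{X}\cong\righttangent{x}{X}$ of \thref{theorem global right tangent space of regular space}: the cutoff/extension construction, the locality argument via $g=(1-\chi)g$ and the Leibniz rule, and the resulting bijection between derivations on germs and derivations on global functions are all correct and agree with the paper's treatment of \emph{that} theorem. But the statement you were asked to prove concerns the \emph{external} tangent spaces, and by \defref{definition external} and \defref{definition other external tangent space} an external tangent vector is a right tangent vector that is additionally \emph{smooth} as a map from $\germ(X,x)$ (respectively from $\smoothmap{X}{\mathbb{R}}$ with its functional diffeology) to $\mathbb{R}$. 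Your proposal never engages with this smoothness condition, which is the only content separating \thref{theorem global external tangent space of regular space} from \thref{theorem global right tangent space of regular space}.

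Concretely, the gap sits in your surjectivity step. Given $D'\in\globalexternal{x}{X}$, you define $D$ on germs by $D([g,\coplotdom{g}])=D'(\tilde g)$ and check well-definedness, linearity, and the Leibniz rule; this only shows $D\in\righttangent{x}{X}$, not $D\in\external{x}{X}$. To finish you must show that for each $D$-open neighborhood $B$ of $x$ the composite $\smoothmap{B}{\mathbb{R}}\to\mathbb{R}$, $g\mapsto D'(\tilde g)$, is smooth for the functional diffeology. Since $D'$ is smooth by hypothesis, this reduces to proving that the extension map $E\colon\smoothmap{B}{\mathbb{R}}\to\smoothmap{X}{\mathbb{R}}$, $g\mapsto\tilde g=\lambda g$ extended by zero, is itself smooth --- a genuine verification with plots and the adjoint $\ad(E\circ p)$, carried out in the paper as \lemref{lemma extension map is smooth} using \lemref{lemma 0 extension is smooth}. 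The converse direction (that $D\mapsto(g\mapsto D([g,X]))$ carries $\external{x}{X}$ into $\globalexternal{x}{X}$ and not merely into $\globalrighttangent{x}{X}$) also requires a smoothness check, though an easy one via the universality of the sum and quotient diffeologies. Without these two verifications your argument proves the right-tangent statement rather than the external one.
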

Here, $\globalexternal{x}{X}$ is the global external tangent space of a diffeological space $X$ at the point $x\in X$.
By these results, we can calculate the right or external tangent spaces of many diffeological spaces by calculating their global versions instead.
In many cases, this simplifies the calculations of tangent spaces. 
We have also constructed an example of a diffeological space that is not smoothly regular, in which the isomorphisms in these theorems fail to hold (\exref{example right tangent not equal to global right tangent}).

Moreover, we see some cases in which it is difficult to determine the right or external tangent spaces completely, but it is possible to determine the tangent space of Vincent (\vincent, Chapter 3). 
One of the most interesting examples in this paper is \exref{example external tangent eigenvalue>,<1}. 
In this example, to investigate the right tangent space of the diffeological space $X$ of this example, we think of a diffeological space $Y$ instead of $X$, where $Y$ is similar to $X$ but a little different. 
We only know that the global right tangent space of $X$ at the point $[(0,0)]$ is more or equal to 1-dimensional, but this seems to be a non-trivial and interesting fact. 

Throughout this paper, there is no diffeological space that is known to have non-isomorphic external tangent space and right tangent space. 
Finding this example is one of the unsolved problems.

In Section \ref{section internal tangent spaces of diffeological space}, we review the definitions and basic properties of the internal tangent space $\internal{x}{X}$ and the internal tangent bundle defined in \CW.
After that, we get the induced action on the internal tangent space of an orbit space by some action called a pointwise smooth action. 
\begin{prop*}{\rm\bf\ref{proposition induced linear map from internal tangent of quotient}} \label{d}
  Let $X$ be a diffeological space, let $x\in X$, let $G$ be a group, and let $\varphi$ be a pointwise smooth action of $G$ on $X$. Let $\pi\colon X\to X/G_x$ be the standard projection.
  The map $\internalfunctor(\pi)\colon\internal{x}{X}\to \internal{[x]}{X/G_x}$ induces a smooth linear map $(\internal{x}{X})_{G_x}\to\internal{[x]}{X/G_x}$, 
  where for a diffeological vector space $V$ and a smooth linear action of $H$ on $V$, we write $V_H=V/\langle v-h\cdot v\mid v\in V, h\in H\rangle$ for the $H$-coinvariant.
\end{prop*}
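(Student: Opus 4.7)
The plan is to reduce the statement to two observations about the internal tangent functor $\internalfunctor$. The first is that $G_x$ acts on $\internal{x}{X}$ by smooth linear automorphisms, so that the coinvariant $(\internal{x}{X})_{G_x}$ is well-defined in the sense spelled out in the statement. The second is that $\internalfunctor(\pi)$ annihilates every relation of the form $v-h\cdot v$, so that it descends through the coinvariant by a universal property argument.

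For the first observation, I fix $h\in G_x$ and note that $\varphi(h,-)\colon X\to X$ is smooth by pointwise smoothness of the action and fixes $x$ because $h\in G_x$. Hence $\varphi(h,-)$ is a based smooth self-map of $(X,x)$, and applying $\internalfunctor$, together with the fact (\CW) that it takes values in the category of diffeological vector spaces and smooth linear maps, yields a smooth linear endomorphism of $\internal{x}{X}$. Functoriality upgrades the assignment $h\mapsto \internalfunctor(\varphi(h,-))$ to a group homomorphism from $G_x$ into the smooth linear automorphisms of $\internal{x}{X}$, which is precisely a smooth linear action in the sense of the statement; this legitimises the notation $(\internal{x}{X})_{G_x}$.

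For the second observation, the crux is the identity $\pi\circ\varphi(h,-)=\pi$ as smooth maps $X\to X/G_x$, which holds for every $h\in G_x$ directly from the definition of the quotient diffeology on $X/G_x$. Applying $\internalfunctor$ to this identity and evaluating at $v\in\internal{x}{X}$ gives $\internalfunctor(\pi)(v)=\internalfunctor(\pi)(h\cdot v)$, where I write $h\cdot v:=\internalfunctor(\varphi(h,-))(v)$. Thus every generator $v-h\cdot v$ of the defining subspace of $(\internal{x}{X})_{G_x}$ lies in the kernel of $\internalfunctor(\pi)$, so the universal property of the quotient of a diffeological vector space by a linear subspace (a linear map out of the quotient is smooth iff its composite with the quotient projection is smooth) produces the desired smooth linear factorization $(\internal{x}{X})_{G_x}\to\internal{[x]}{X/G_x}$.

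The argument is essentially formal once the setup is in place, so there is no really hard step. The point most worth attention is that only pointwise smoothness of $\varphi$ is used: the argument never requires any structure on $G$ itself or any joint smoothness of $G\times X\to X$, because only each individual translation $\varphi(h,-)$ needs to be smooth for $\internalfunctor$ to act on it and for the quotient $X/G_x$ to be defined.
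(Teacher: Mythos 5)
Your proposal is correct and follows essentially the same route as the paper: the key point in both is the identity $\pi\circ\varphi_g=\pi$ for $g\in G_x$, which shows that $\internalfunctor(\pi)$ kills every generator $v-g\cdot v$, after which the universal property of the quotient diffeological vector space gives the factorization. The only cosmetic difference is that you apply functoriality to the equation of maps directly, whereas the paper verifies the same cancellation on the generators $[p,v]$ via the explicit formula $\internalfunctor(f)([p,v])=[f\circ p,v]$; your first observation is the paper's separate Proposition~\ref{proposition definition linear action on internal tangent}, reproved inline.
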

This proposition is useful for calculations of quotient diffeological spaces by some group actions.
Also, in some special cases, we know that the induced linear maps are automatically isomorphism (\corref{corollary internal tangent of quotient of fine vector space}). 
Our results offer a new perspective on the relationships between various methods of defining tangent spaces for diffeological spaces, contributing to a deeper understanding of this topic.

In this paper, all manifolds are finite-dimensional, Hausdorff, second countable, and without boundary. 
Also, we follow the categorical notation and terminology of Mac Lane \maclane.

\section*{Acknowledgement}
This work was supported by JSPS KAKENHI Grant Number 24KJ0881. 

I would like to express my gratitude to my supervisor, Takuya Sakasai for his invaluable advice and tremendous support. 

Also, I would like to express my gratitude to the WINGS-FMSP program for its financial support. 
In particular, I would like to express my gratitude to my supporting supervisor in the WINGS-FMSP program, Toshiyuki Kobayashi for his huge support and encouragement. 
Lastly, I would like to express my gratitude to Professor Katsuhiko Kuribayashi at Shinshu University for his generous advice and for many stimulating conversations at every research meeting.

\section{Diffeological spaces} \label{section diffeological space}

In this section, we define {\it diffeological spaces} and comment on their their basic properties.
All definitions and properties in this section are found in \IZ, the standard textbook of diffeology. 
We omit the proofs of propositions in this section. See \IZ. 

\begin{defi}[\IZ, 1.5] \label{definition diffeology}
  Let $X$ be a set. A \italic{parametrization} of $X$ is a map $U\to X$ where $U$ is an open set of $\mathbb{R}^n$ for some $n\in\mathbb{Z}_{\geq 0}$.
  A \italic{diffeology} on $X$ is a set of parametrizations $\mathscr{D}_X$ (we call an element of $\mathscr{D}_X$ plot) such that the following three axioms are satisfied:
  \begin{description}
    \item[Covering] Every constant parametrization $U\to X$ is a plot.
    \item[Locality] Let $p\colon U\to X$ be a parametrization. If there is an open covering $\{U_{\alpha}\}$ of $U$ such that ${p|}_{U_{\alpha}}\in \mathscr{D}_X$ for all $\alpha$, then $p$ itself is a plot.  
    \item[Smooth compatibility] For every plot $p\colon U\to X$, every open set $V$ in $\mathbb{R}^m$ and $f\colon V\to U$ that is smooth as a map between Euclidean spaces, $p\circ f\colon V\to X$ is also a plot.
  \end{description}
  A diffeological space is a set equipped with a diffeology.
  We usually write the underlying set $X$ to represent a diffeological space $(X,\mathscr{D}_X)$.
\end{defi}

Throughout this paper, we call an open set of $\mathbb{R}^n$ for some $n$ a \italic{Euclidean open set}, and we denote the domain set of a plot $p$ of a diffeological space by $\plotdom{p}$.
Also, we call a plot $p$ an \italic{$n$-plot} if its domain $\plotdom{p}$ is an open set of $\mathbb{R}^n$. 

\begin{defi}[\IZ, 1.14] \label{definition smooth map}
  Let $X$ and $Y$ be two diffeological spaces. 
  A map $f\colon X\to Y$ is said to be smooth if for every plot $p\colon \plotdom{p}\to X$, the map $f\circ p\colon \plotdom{p}\to Y$ is a plot of $Y$.
  The set of smooth maps from $X$ to $Y$ is denoted by $\smoothmap{X}{Y}$. 
  If $X$ is a Euclidean open set, $C^{\infty}(X,Y)$ is equal to the set of all plots $X\to Y$ (\IZ, 1.16).
\end{defi}

Diffeological spaces and smooth maps between them form a category denoted by $\dflg$.
We call an isomorphism of this category a \italic{diffeomorphism}. The category $\dflg$ contains the category $\mfd$ of smooth manifolds and smooth maps as a full subcategory. 
In particular, $\dflg$ also contains the category $\encl$ of Euclidean open sets and smooth maps as a full subcategory. This follows from:

\begin{ex} \label{example definition manifold diffeology}
  Let $M$ be a smooth manifold, and let $\mathscr{D}_M$ be the set of all smooth maps from some open set $U$ of Euclidean spaces to $M$ (in the meanings of smooth map between manifolds).
  Then $\mathscr{D}_M$ is a diffeology on $M$. 
\end{ex}

\begin{ex}[\IZ, 1.20, 1.21] \label{example definition max/min diffeology}
  Let $X$ be a set and $\mathscr{D}_X^{max}$ the set of all parametrizations of $X$. 
  Then $\mathscr{D}_X^{max}$ is a diffeology of $X$ and we call this diffeology the \italic{largest} diffeology on $X$.
  Let $\mathscr{D}_X^{min}$ be a set of all locally constant parametrizations of $X$. 
  Then $\mathscr{D}_X^{min}$ is a diffeology of $X$ and we call this diffeology the \italic{smallest} diffeology on $X$\footnote{Sometimes these diffeologies are called the coarse diffeology and the discrete diffeology respectively (for example, in \IZ). 
  These terms are based on concepts from the field of topology (like topological spaces, every map from the discrete diffeological space is smooth). However, I think that these terms are very confusing. 
  Therefore in this paper, we call these the largest diffeology and the smallest diffeology respectively.}.
\end{ex}

\begin{ex} \label{example definition topological space diffeology}
  Let $T$ be a topological space, and let $\mathscr{D}_T$ be the set of all continuous parametrizations of $T$.
  Then $\mathscr{D}_T$ is a diffeology on $T$.
  We denote the diffeological space $(T,\mathscr{D}_T)$ by $\conti(T)$ and this correspondence defines a functor $\conti\colon \topsp\to\dflg$, 
  where we denote the category of topological spaces and continuous maps between them by $\topsp$.
\end{ex}

\begin{defi}[\IZ, 2.8] \label{definition D-topology}
  Let $X$ be a diffeological space. The largest topology on $X$ which makes all plots continuous is called the \italic{$D$-topology} on $X$. 
  With this construction, we define the functor $D$ from the category $\dflg$ to the category of topological spaces $\topsp$.
\end{defi}

It is known that the functor $D$ is the left adjoint to $\conti$ (Shimakawa-Yoshida-Haraguchi \SH, Proposition 2.1).
For a manifold $M$ equipped with the standard diffeology (\exref{example definition manifold diffeology}), the $D$-topology of $M$ is the standard topology of the manifold $M$.

\begin{defi}[\IZ, 1.66] \label{definition generating family}
  Let $X$ be a set, and let $\mathscr{F}$ be a family of parametrizations of $X$. Then there exists the smallest diffeology $\mathscr{D}$ on $X$ which contains all parametrizations of $\mathscr{F}$ as plots. 
  We call this diffeology $\mathscr{D}$ the \italic{diffeology generated by} $\mathscr{F}$. 
  A parametrization $p\colon \plotdom{p}\to X$ is a plot of $\mathscr{D}$ if and only if it satisfies the following property:
  there is an open covering $\{U_{\alpha}\}$ of $\plotdom{p}$ such that for all $\alpha$, either ${p|}_{U_{\alpha}}$ is a constant parametrization 
  or ${p|}_{U_{\alpha}}=q\circ f$ for some $q\in\mathscr{F}$ and some smooth map $f\colon U_{\alpha} \to \plotdom{q}$ (\IZ, 1.68).

  If $\mathscr{F}$ satisfies $\bigcup_{q\in \mathscr{F}} q(\plotdom{q})=X$, we call $\mathscr{F}$ a \italic{covering generating family}. 
  In this case, parametrization $p\colon \plotdom{p}\to X$ is a plot of $\mathscr{D}$ if and only if it satisfies the following property:
  there is an open covering $\{U_{\alpha}\}$ of $\plotdom{p}$ such that for all $\alpha$, ${p|}_{U_{\alpha}}=q\circ f$ for some $q\in\mathscr{F}$ and some smooth map $f\colon U_{\alpha} \to \plotdom{q}$.
\end{defi}

\begin{ex} \label{example manifold generating family}
  Let $M$ be an $n$-dimensional manifold with an atlas $\{\varphi_{\alpha}\colon U_{\alpha}\to V_{\alpha}\}_{\alpha\in A}$, 
  where $U_{\alpha}$ is an open set of $M$, $V_{\alpha}$ is an open set of $\mathbb{R}^n$. 
  $\{\varphi_{\alpha}^{-1}\colon V_{\alpha}\to U_{\alpha}\}_{\alpha\in A}$ is a generating family of the standard diffeology on $M$, which is defined in \exref{example definition manifold diffeology}.
\end{ex}

\begin{ex} \label{example definition wire diffeology}
  Let $n\geq 2$, and let $\mathscr{F}$ be the set of all 1-plots of $\mathbb{R}^n$ with the standard diffeology. 
  The diffeology $\mathscr{D}_{wire}$ generated by $\mathscr{F}$ is called the wire diffeology and this is a different diffeology to the standard diffeology of $\mathbb{R}^n$ 
  because the standard inclusion $\mathbb{R}^2\to \mathbb{R}^n$ is a plot of standard diffeology but is not a plot of $\mathscr{D}_{wire}$.
\end{ex}

\begin{ex} \label{example definition cross_quot generating family}
  Let $\cross =\{(x,y)\in \mathbb{R}^n\mid \text{$x=0$ or $y=0$}\}$, and let $\alpha\colon \mathbb{R}\to \cross;t\mapsto (t,0),\beta\colon \mathbb{R}\to \cross;t\mapsto (0,t)$. 
  We set $\mathscr{F}=\{\alpha,\beta\}$, and we define the diffeology $\mathscr{D}_{quot}$ on $\cross$ which is generated by $\mathscr{F}$. 
  We denote the diffeological space $(\cross,\mathscr{D}_{quot})$ by $\crossquot$.
\end{ex}

Let $X$ be a set. All diffeologies on $X$ form complete lattice with respect to the standard inclusion. 
For any family $\{\mathscr{D}_{\alpha}\}_{\alpha\in A}$ of diffeologies on $X$, its infimum is $\bigcap_{\alpha\in A}\mathscr{D}_{\alpha}$ (this automatically satisfies the axiom of diffeology)
and its supremum is the diffeology generated by $\bigcup_{\alpha\in A}\mathscr{D}_{\alpha}$.

\begin{defi}[\IZ, 1.50] \label{definition quotient diffeology}
  Let $X$ be a diffeological space, and let $\equivrel$ an equivalence relation on $X$. 
  Let $Y=X/{\equivrel}$, and let $\pi\colon X\to Y$ be the quotient map. 
  We define the \italic{quotient diffeology} on $Y$ to be the smallest diffeology making the quotient map $\pi$ smooth.
  A parametrization $p\colon \plotdom{p}\to Y$ is a plot of the quotient diffeology if and only if it satisfies the following property:
  there is an open covering $\{U_{\alpha}\}$ of $U$ such that for all $\alpha$, ${p|}_{U_{\alpha}}=\pi\circ f$ for some smooth map $f\colon U_{\alpha} \to X$.

  Let $X$ and $Y$ be two diffeological spaces, and let $f\colon X\to Y$ be a surjective smooth map. 
  The map $f$ is called a \italic{subduction} if the smallest diffeology making $f$ smooth is equal to the diffeology on $Y$. 
  In particular, the quotient map $\pi\colon X\to X/{\equivrel}$ is a subduction.
\end{defi}

\begin{defi}[\IZ, 1.33] \label{definition subdiffeology}
  Let $X$ be a diffeological space, and let $Y$ be a subset of $X$. Let $i\colon Y\to X$ be the standard inclusion map. 
  We define the \italic{subdiffeology} $\mathscr{D}_Y$ on $Y$ to be the largest diffeology making $i$ smooth, that is, $\mathscr{D}_Y=\{p\colon \plotdom{p}\to Y\mid \text{$i\circ p$ is a plot of $X$}\}$.
  
  Let $X$ and $Y$ be two diffeological spaces, and let $f\colon X\to Y$ be an injective smooth map. 
  The map $f$ is called an \italic{induction} if the largest diffeology making $f$ smooth is equal to the diffeology on $X$. 
  In particular, the inclusion map $i\colon X\to Y$ where $X$ is a subset of $Y$ equipped with the subdiffeology from $Y$ is an induction.
\end{defi}

\begin{defi}[\IZ, 1.57] \label{definition functional diffeology}
  Let $X$ and $Y$ be two diffeological spaces. We define the \italic{functional diffeology} on $C^{\infty}(X,Y)$ as follows. A parametrization $p\colon \plotdom{p}\to C^{\infty}(X, Y)$ is a plot if the map
  \[
    \ad(p)\colon \plotdom{p}\times X\to Y;(u,x)\mapsto p(u)(x)
  \]
  is a smooth map from the product diffeological space $\plotdom{p}\times X$ to $Y$.
\end{defi}

With these constructions, we have:

\begin{thm}[\baez, Theorem 5.25] \label{theorem dflg are cartesian closed}
  The category $\dflg$ is complete, cocomplete and cartesian closed.
\end{thm}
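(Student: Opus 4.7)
The plan is to verify the three properties separately. For completeness and cocompleteness, I would invoke the standard fact from category theory (\maclane) that a category with all small products and all equalizers is complete, and dually for cocompleteness, reducing the proof to an explicit construction of each of these four basic (co)limits in $\dflg$. For cartesian closure, the internal hom is forced to be $\smoothmap{Y}{Z}$ with the functional diffeology of \defref{definition functional diffeology}, so the task is to prove that this choice yields a natural exponential bijection of hom-sets.

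The construction of the four basic (co)limits is direct. Given a family $\{X_{\alpha}\}_{\alpha\in A}$, I would equip the set-theoretic product $\prod_{\alpha} X_{\alpha}$ with the diffeology in which a parametrization is a plot iff each of its components is a plot — this is easily verified to satisfy the three axioms of \defref{definition diffeology} and to enjoy the expected universal property. The equalizer of $f,g\colon X\to Y$ is the subset $\{x\in X\mid f(x)=g(x)\}$ carrying the subdiffeology of \defref{definition subdiffeology}. Dually, the coproduct is the disjoint union with the smallest diffeology making the inclusions smooth — explicitly, those parametrizations that locally factor through an inclusion, where the Locality axiom of \defref{definition diffeology} is precisely what ensures this set is closed under gluing — and the coequalizer of $f,g$ is the quotient by the equivalence relation generated by $f(x)\sim g(x)$, carrying the quotient diffeology of \defref{definition quotient diffeology}. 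In each case the universal property is immediate from the extremal nature of the diffeology chosen.

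For cartesian closure, I would prove that the set-theoretic exponential bijection
\[
  \smoothmap{X\times Y}{Z}\longrightarrow \smoothmap{X}{\smoothmap{Y}{Z}},\quad f\longmapsto\bigl(x\mapsto f(x,\cdot)\bigr),
\]
restricts to a natural bijection between the indicated hom-sets. A map $\tilde f\colon X\to\smoothmap{Y}{Z}$ is smooth iff for every plot $p\colon U\to X$ the composite $\tilde f\circ p$ is a plot of $\smoothmap{Y}{Z}$, which by \defref{definition functional diffeology} means that $\ad(\tilde f\circ p)\colon U\times Y\to Z$ is smooth; applying the same principle once more along plots of $Y$, this in turn amounts to the smoothness of $f\colon X\times Y\to Z$ with respect to the product diffeology. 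Naturality in all three variables and the triangle identities for the adjunction $(-\times Y)\dashv \smoothmap{Y}{-}$ then follow formally.

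The main obstacle I expect is precisely this last step, since the functional diffeology is itself defined adjointly. Chasing plots through two layers of the ``adjoint of a plot is smooth'' condition requires a careful match between plots of $X\times Y$ (which are exactly the parametrizations whose two projections are plots) and the doubled parametrizations implicit in the definition of plots of $\smoothmap{Y}{Z}$. A convenient way to organize the bookkeeping is to first verify that the evaluation map $\mathrm{ev}\colon \smoothmap{Y}{Z}\times Y\to Z$ is smooth — which is nearly tautological from \defref{definition functional diffeology} — and then derive the exponential bijection from the resulting couniversal property, thereby separating the adjointness manipulations from the verification of smoothness.
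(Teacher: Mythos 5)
Your outline is sound, but be aware that the paper itself contains no proof of this statement: Section~\ref{section diffeological space} opens by declaring that all proofs there are omitted, and the theorem is simply cited from Baez--Hoffnung \baez. So the real comparison is with the cited source, which obtains the result from general machinery: $\dflg$ is exhibited as the category of concrete sheaves on a concrete site, and completeness, cocompleteness and cartesian closure follow from the fact that any such category is a quasitopos. Your route is the elementary, self-contained one --- reducing (co)completeness to small (co)products and (co)equalizers as in \maclane, building these from the product, sub, sum and quotient diffeologies of \defref{definition subdiffeology} and \defref{definition quotient diffeology}, and verifying the exponential law directly for the functional diffeology of \defref{definition functional diffeology}. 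Both approaches are standard; yours makes every universal property explicit and needs no sheaf theory, while the quasitopos argument buys more structure (e.g.\ local cartesian closure) at the cost of more setup. The one step in your sketch that genuinely needs the care you already flag is the equivalence between smoothness of $f$ on $X\times Y$ and smoothness of $\ad(\tilde f\circ p)$ for every plot $p$ of $X$: the forward direction is the identity $\ad(\tilde f\circ p)=f\circ(p\times\id_Y)$, and the reverse uses that a plot of $X\times Y$ is a pair $(q_1,q_2)$ of plots together with the factorization $f\circ(q_1,q_2)=\ad(\tilde f\circ q_1)\circ(\id,q_2)$. Once that identity is written down, your plan closes without further obstruction.
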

\section{Internal tangent spaces of diffeological spaces} \label{section internal tangent spaces of diffeological space}

In this section, we discuss the internal tangent space on a diffeological space, which is one of the generalizations of tangent spaces on smooth manifolds.
This tangent space was defined in \CW. 

\subsection{Internal tangent spaces} \label{subsection internal}

First, we define some categories. 

\begin{defi} \label{definition local map}
  Let $X$, $Y$ be two diffeological spaces and $x\in X$, $y\in Y$. Let 
  \[
    \locmap((X,x),Y)=\{f\colon U\to Y\mid \text{$x\in U$, $U$ is open set of $X$}\},
  \]
  \[
    \locmap((X,x),(Y,y))=\{f\in \locmap((X,x),Y)\mid f(x)=y\}.
  \]
  We define an equivalence relation on $\locmap((X,x),Y)$:
  \[
    f\equivrel g\colon \Leftrightarrow \text{there exists a $D$-open set $(x\in)\; U\subset \dom(f)\cap \dom(g)$ such that ${f|}_U={g|}_U$}.
  \]
  We define
  \[
    \germ((X,x),Y)=\locmap((X,x),Y)/{\equivrel}, \;\germ((X,x),(Y,y))=\locmap((X,x),(Y,y))/{\equivrel}.
  \]
\end{defi}

\begin{defi} \label{definition euclidean category}
  We define some categories as follows.
  \begin{itemize}
    \item $\encl$ : the category of Euclidean open sets and smooth maps between them.
    $\encl$ is a full subcategory of $\dflg$.
    \item $\dflgbased$ : the category of diffeological spaces with a based point and based smooth maps between them.
    \item $\enclbased$ : the category of Euclidean open sets with a based point and based smooth maps between them.
    $\enclbased$ is a full subcategory of $\dflgbased$.
    \item $\enclzero$ : the category of Euclidean open sets which contain the origin and smooth maps that preserve the origin.
    $\enclzero$ is a full subcategory of $\enclbased$.
    \item $\dflgloc$ : the category whose objects are based diffeological spaces and the morphism set from $(X,x)$ to $(Y,y)$ is the set $\germ((X,x),(Y,y))$.
    \item $\enclloc$ : the full subcategory of $\dflgloc$ whose objects are Euclidean open sets with base point.
    \item $\enclzeroloc$ : the full subcategory of $\enclloc$ whose base points of objects are the origin.
  \end{itemize}
  We denote the
   fully faithful inclusion functors by $\incl\colon\encl\to\dflg,\inclbased\colon\enclbased\to\dflgbased,\inclzero\colon\enclzero\to\dflgbased,\inclloc\colon\enclloc\to\dflgloc,\inclzeroloc\colon\enclzeroloc\to\dflgloc$ respectively.
\end{defi}
We define the internal tangent spaces of diffeological spaces as the left Kan extension of tangent functor from the category $\enclbased$.

\begin{defi} \label{definition tangent functor from encl}
  We define the functor $T^{\enclloc}\colon \enclloc\to\vect$ as follows:
  \begin{description}
    \item [Objects] $(U,u)\mapsto T_u(U)$.
    \item [Morphisms] $([f]\colon (U,u)\to (V,v))\mapsto (df_u\colon T_u(U)\to T_v(V))$.
  \end{description}
  This functor is well-defined with respect to the representatives of morphisms because tangent spaces of manifolds are locally determined.
  Moreover, we define the functor $\encltoloc\colon\enclbased\to\enclloc$ as follows:
  \begin{description}
    \item[Objects] $(U,u)\mapsto (U,u)$. 
    \item[Morphisms] $(f\colon (U,u)\to (V,v))\mapsto ([f]\colon (U,u)\to (V,v))$.
  \end{description}
  Similarly, we define the functor $\dflgtoloc\colon\dflgbased\to\dflgloc$.
  We define the composite functor $T^{\enclbased}=T^{\enclloc}\circ \encltoloc\colon\enclbased\to\vect$, and restricting this functor, we also define the functor $T^{\enclzero}\colon\enclzero\to\vect$. 
\end{defi}

\begin{defi} \label{definition plot category}
  Let $X$ be a diffeological space, and let $x\in X$. If we regard $(X,x)$ as an object of $\dflgbased$, we think of the comma category $\inclzero\downarrow (X,x)$. 
  We denote the set of objects of $\inclzero\downarrow (X,x)$ by $\plots(X,x)$ and we call the elements of this set \italic{plots centered at $x$}.
\end{defi}

Using these notations, we define the internal tangent spaces as follows. 

\begin{defi} \label{definition internal}
  The \italic{internal tangent functor} $\internalfunctor\colon\dflgbased\to\vect$ is the left Kan extension of $T^{\enclzero}\colon\enclzero\to\vect$ along the inclusion functor $\inclzero\colon\enclzero\to\dflgloc$, that is:
  \[
    \internalfunctor(X,x) = \Lan_{\inclzero} T^{\enclzero}(X,x).
  \]
  Moreover, we construct the image of the left Kan extension concretely by using the calculation formula of the pointwise left Kan extension as follows:
  \[
    \internalfunctor(X,x) = \bigoplus_{p\in\plots(X,x)} T_0 (\plotdom{p})/R,
  \]
  where
  \[
    R=\langle v-w\mid \text{$v\in T_0 (\plotdom{p})$, $w\in T_0 (\plotdom{q})$, $T(f)v=w$, where $p=q\circ f$.}\rangle.
  \]
  We denote the vector space $\internalfunctor(X,x)$ by $\internal{x}{X}$ and call it the \italic{internal tangent space of $X$ at $x$}.
  Even if we restrict objects of $\enclzero$ only to all connected open sets, we have the same functor, and this is the same as the definition in \CW.
  The internal tangent functor is also isomorphic to the left Kan extension of the functor $T^{\enclbased}\colon\enclbased\to\vect$ along $\inclbased\colon\enclbased\to\dflgbased$ 
  because the inclusion functor $\enclzero\to\enclbased$ is a category equivalent.
\end{defi}

The correspondence of morphisms by the internal tangent functor is concrete.
Since for each plot $p\colon \plotdom{p}\to X$, there exists a natural linear map $T_0 (\plotdom{p})\to \internal{x}{X}$, 
we denote the image of $v\in T_0 (\plotdom{p})$ by this linear map by $[p,v]\in \internal{x}{X}$.
Using this notation, we describe the induced linear map concretely.
\begin{prop} \label{proposition internal tangent space morphism}
  Let $X$, $Y$ be two diffeological spaces, let $x\in X$, and let $f\colon X\to Y$ be a smooth map. 
  The induced map $T(f)\colon\internal{x}{X}\to \internal{f(x)}{Y}$ is described as follows:
  \[
    \internalfunctor(f)([p,v])=[f\circ p,v].
  \]
\end{prop}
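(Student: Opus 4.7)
The plan is to unpack the construction of the left Kan extension directly and identify $\internalfunctor(f)$ as the map on colimits induced by post-composition of plots with $f$.

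First I would recall that by \defref{definition internal}, the space $\internal{x}{X}$ is presented as a colimit
\[
  \internal{x}{X} = \operatorname{colim}_{p\in\plots(X,x)} T_0(\plotdom{p}),
\]
concretely realized as the quotient $\bigoplus_{p} T_0(\plotdom{p})/R$, and that the canonical cocone maps $T_0(\plotdom{p}) \to \internal{x}{X}$ are precisely the assignments $v\mapsto [p,v]$. This is just the pointwise formula for $\Lan_{\inclzero}T^{\enclzero}$ applied at $(X,x)$, using the fact that the comma category $\inclzero\downarrow (X,x)$ has object set $\plots(X,x)$.

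Next I would observe that a smooth pointed map $f\colon (X,x)\to (Y,f(x))$ induces a functor $f_*\colon \plots(X,x)\to \plots(Y,f(x))$ by post-composition $p\mapsto f\circ p$, because $f\circ p$ is again a plot of $Y$ (plots are smooth maps from Euclidean open sets, and smooth maps are closed under composition) and sends the base point to $f(x)$. Crucially, $f_*$ commutes with the projection to $\enclzero$ on the nose: the domain of $f\circ p$ equals the domain of $p$, and any morphism $g$ in $\enclzero$ with $p=q\circ g$ still satisfies $f\circ p = (f\circ q)\circ g$. Consequently, the family of linear maps
\[
  \varphi_p\colon T_0(\plotdom{p}) \longrightarrow \internal{f(x)}{Y},\qquad v\longmapsto [f\circ p, v],
\]
forms a cocone on the diagram defining $\internal{x}{X}$: if $p = q\circ g$ for some $g$ in $\enclzero$, then $[f\circ p, v] = [(f\circ q)\circ g, v] = [f\circ q, T(g)_0 v]$ in $\internal{f(x)}{Y}$, so the relations cutting out $R$ are preserved.

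Finally I would invoke the universal property of the colimit: the cocone $\{\varphi_p\}$ factors through a unique linear map $\internal{x}{X}\to\internal{f(x)}{Y}$ sending $[p,v]$ to $[f\circ p,v]$. It then remains to identify this map with $\internalfunctor(f)$, which follows because $\internalfunctor$ is defined as the left Kan extension and the naturality of $\Lan_{\inclzero}T^{\enclzero}$ in its argument is exactly induced by this cocone construction (equivalently, one reads off the formula from Mac Lane's description of $\Lan$ on morphisms, \maclane). There is no serious obstacle here; the only point requiring a little care is the cocone verification in the previous paragraph, since it is what ensures the rule $[p,v]\mapsto [f\circ p,v]$ descends to the quotient by $R$.
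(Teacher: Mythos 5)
Your proof is correct and follows exactly the route the paper has in mind: the paper states this proposition without proof, treating it as immediate from the pointwise colimit formula for $\Lan_{\inclzero}T^{\enclzero}$ and the induced functor $\inclzero\downarrow(X,x)\to\inclzero\downarrow(Y,f(x))$ given by post-composition with $f$. Your cocone verification fills in precisely the detail the paper leaves implicit, so there is nothing to object to.
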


The tangent space is a local invariant, so we have a more local definition.
\begin{prop}[\CW, the paragraph after Definition 3.1] \label{proposition local definition of internal tangent spaces}
  Let $X$ be a diffeological space, and let $x\in X$. We define the functor $\internalfunctor^{local}=\Lan_{\inclzeroloc} T^{\enclzeroloc}\colon\dflgloc\to\vect$.
  Then the composite functor $\internalfunctor^{local}\circ\dflgtoloc$ and $\internalfunctor$ are naturally isomorphic as functors $\dflgbased\to\vect$.
\end{prop}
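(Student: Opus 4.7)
The plan is to compute both functors pointwise via the colimit formula for left Kan extensions and then identify the resulting colimits by exhibiting a final functor between the two indexing comma categories.

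Fix $(X,x)\in\dflgbased$. The pointwise formula yields
\[
  \internal{x}{X} \;=\; \operatorname*{colim}_{(U,p)\in \inclzero\downarrow (X,x)} T_0(U)
  \qquad\text{and}\qquad
  \internalfunctor^{local}(\dflgtoloc(X,x)) \;=\; \operatorname*{colim}_{(U,[p])\in \inclzeroloc\downarrow (X,x)} T_0(U).
\]
Define the germification functor
\[
  \Phi_{(X,x)}\colon \inclzero\downarrow (X,x)\longrightarrow \inclzeroloc\downarrow (X,x),\qquad (U,p)\longmapsto (U,[p]),\quad f\longmapsto [f].
\]
This is well-defined, and pulling back the diagram $T_0(-)$ on the target along $\Phi_{(X,x)}$ recovers the corresponding diagram on the source. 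The key claim is that $\Phi_{(X,x)}$ is a final functor; granting this, the induced map between colimits is an isomorphism, giving the desired natural isomorphism, and naturality in $(X,x)$ comes from the manifest naturality of $\Phi$ under post-composition with smooth maps $(X,x)\to (Y,y)$.

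To prove the claim, fix $(V,[q])\in \inclzeroloc\downarrow(X,x)$ and examine the slice category $(V,[q])\downarrow \Phi_{(X,x)}$. For non-emptiness, choose a representative $q_0\colon V_0\to X$ of $[q]$, where $V_0\subset V$ is an open neighborhood of $0$; then $(V_0,q_0)\in\inclzero\downarrow(X,x)$ and the germ of the identity defines a morphism $[\mathrm{id}]\colon (V,[q])\to (V_0,[q_0])$ in $\inclzeroloc\downarrow(X,x)$, producing an object of the slice. For connectedness, given two slice objects $((U_i,p_i),[g_i])$ for $i=1,2$, choose representatives $g_i\colon W_i\to U_i$ and pass to a common open neighborhood $W\subset W_1\cap W_2$ of $0$ where $p_1\circ g_1 = q = p_2\circ g_2$ holds literally (possible because both sides represent $[q]$). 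Writing $r = q|_W$, we get $(W,r)\in\inclzero\downarrow(X,x)$ together with morphisms $g_i|_W\colon (W,r)\to (U_i,p_i)$ in $\inclzero\downarrow(X,x)$, fitting into a cospan $((U_1,p_1),[g_1])\leftarrow((W,r),[\mathrm{id}])\rightarrow((U_2,p_2),[g_2])$ inside the slice.

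The principal technical point is the connectedness step, where one must juggle germs and their representatives and shrink open neighborhoods repeatedly, so that all germ-level identities are realized as literal equalities of plots on a single common open set, ensuring every intermediate morphism honestly lives in $\inclzero\downarrow(X,x)$ rather than merely at the level of germs.
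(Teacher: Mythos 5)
Your proposal is correct and follows essentially the same route as the paper: both compute the two left Kan extensions pointwise as colimits over the comma categories $\inclzero\downarrow(X,x)$ and $\inclzeroloc\downarrow(X,x)$ and reduce the claim to finality of the germification functor between them. You supply the full finality check (non-emptiness \emph{and} connectedness of the slices, with the careful shrinking of neighborhoods to turn germ identities into literal equalities), whereas the paper's one-line proof only records the non-emptiness part; your version is the complete form of the same argument.
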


\begin{proof}
  The germ functor $\inclzero\downarrow (X,x)\to\inclzeroloc\downarrow (X,x)$ is a final functor because 
  for each object $p$ of $\inclzeroloc\downarrow (X,x)$, it is also an object of $\inclzero\downarrow (X,x)$. 
\end{proof}

As a corollary, we describe the internal tangent spaces of $D$-open sets of diffeological spaces.
\begin{cor}[\CW, Proposition 3.6] \label{corollary subspace internal tangent}
  Let $X$ be a diffeological space, and let $x\in X$. Let $A$ be a $D$-open neighborhood of $x$ in $X$. Equip $A$ with a subdiffeology of $X$.
  Then the natural inclusion map induces an isomorphism $\internal{x}{A}\cong \internal{x}{X}$.
\end{cor}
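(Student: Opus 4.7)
The strategy is to pass to the local (germ) description of the internal tangent space provided by \proref{proposition local definition of internal tangent spaces}, and then to exhibit an isomorphism of the comma categories that compute the pointwise left Kan extensions on either side.

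The key observation is that every plot is continuous with respect to the $D$-topology, since by \defref{definition D-topology} the $D$-topology on $X$ is defined as the largest topology making every plot continuous. Consider any object of $\inclzeroloc \downarrow (X, x)$, namely a germ $[p] \colon (U, 0) \to (X, x)$ represented by a smooth map $p \colon U \to X$ with $p(0) = x \in A$. Because $A$ is $D$-open in $X$, the preimage $p^{-1}(A)$ is an open neighborhood of $0$ in $U$. Restricting $p$ to $p^{-1}(A)$ yields a plot of the subdiffeology on $A$ (by the locality axiom of \defref{definition diffeology} together with the definition of subdiffeology), whose germ at $0$ agrees with $[p]$ after postcomposing with the inclusion $A \hookrightarrow X$. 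This construction gives a functor $\inclzeroloc \downarrow (X, x) \to \inclzeroloc \downarrow (A, x)$, and since morphisms in these comma categories are germs of smooth maps between Euclidean open sets, which depend only on the source and target germs rather than on the ambient diffeological space, this functor is a strict inverse to the one induced by $A \hookrightarrow X$.

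Since both $\internalfunctor^{local}(A, x)$ and $\internalfunctor^{local}(X, x)$ are computed as colimits of the same functor $T^{\enclzeroloc} \colon \enclzeroloc \to \vect$ along these mutually inverse indexing categories, the map induced by the inclusion is an isomorphism; by \proref{proposition local definition of internal tangent spaces} this transports to the desired isomorphism $\internal{x}{A} \cong \internal{x}{X}$. I do not foresee a substantive obstacle: the only non-formal input is the $D$-continuity of plots, which is immediate, and the rest reduces to bookkeeping about the comma-category description of the pointwise left Kan extension.
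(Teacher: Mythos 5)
Your argument is correct and is essentially the paper's own proof in expanded form: the paper simply observes that $(A,x)$ and $(X,x)$ are isomorphic as objects of $\dflgloc$ (the inclusion and the locally defined identity on $A$ being mutually inverse germs, precisely because $A$ is $D$-open and plots are $D$-continuous) and then applies \proref{proposition local definition of internal tangent spaces}. Your comma-category isomorphism is just the unpacking of that object isomorphism, so no further comment is needed.
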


\begin{proof}
$(A,x)$ and $(X,x)$ are isomorphic as objects of the category $\dflgloc$.
\end{proof}
\begin{prop}[\CW, Proposition 3.3] \label{proposition 1-dimension plots generate internal tangent}
  Let $X$ be a diffeological space, and let $x\in X$. $\internal{x}{X}$ is generated by elements $[p,v]$, where $p$ is a plot whose domain $\plotdom{p}$ is an open set of $\mathbb{R}$.
\end{prop}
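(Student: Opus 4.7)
The plan is to use the explicit pointwise formula for the left Kan extension recorded in \defref{definition internal}, which expresses $\internal{x}{X}$ as $\bigoplus_{p\in\plots(X,x)} T_0(\plotdom{p})/R$. Under this description the vector space $\internal{x}{X}$ is spanned by the symbols $[p,v]$ as $p$ ranges over plots centered at $x$ and $v$ ranges over $T_0(\plotdom{p})$, so it suffices to show that every such generator already equals $[\gamma,w]$ for some 1-plot $\gamma$ centered at $x$ and some $w\in T_0(\plotdom{\gamma})$. In fact I will exhibit each $[p,v]$ as a single 1-plot generator, not merely a linear combination.

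Fix a generator $[p,v]$ with $\plotdom{p}\subseteq\mathbb{R}^n$ and $v\in T_0(\plotdom{p})\cong\mathbb{R}^n$. The key construction is the straight-line curve
\[
  \gamma\colon (-\varepsilon,\varepsilon)\to \plotdom{p},\qquad \gamma(t)=tv,
\]
with $\varepsilon>0$ chosen small enough that the image stays inside $\plotdom{p}$; this is possible by openness of $\plotdom{p}$ at the origin. Then $p\circ\gamma$ is a 1-plot of $X$ centered at $x$ by smooth compatibility of $\mathscr{D}_X$. Applying the defining relation $R$ to the factorization $p\circ\gamma=p\circ\gamma$, viewing $\gamma$ as the connecting smooth map between the Euclidean domains of the two plots, yields
\[
  [p\circ\gamma,\partial_t|_0]=[p,T_0(\gamma)(\partial_t|_0)]=[p,\gamma'(0)]=[p,v],
\]
so $[p,v]$ is indeed the image of the 1-plot generator $[p\circ\gamma,\partial_t|_0]$.

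I do not anticipate any real obstacle: the argument is a one-line unwinding of the explicit quotient description and the relation $R$. The only minor bookkeeping is the choice of $\varepsilon$ ensuring that the line $t\mapsto tv$ remains inside $\plotdom{p}$; this is automatic by openness, and alternatively one can first invoke \corref{corollary subspace internal tangent} to replace $\plotdom{p}$ by a small open ball around the origin, after which $\gamma$ is globally defined on a neighborhood of $0$.
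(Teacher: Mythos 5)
Your proposal is correct and is essentially the paper's own argument: the paper likewise picks a curve $c\colon(-\epsilon,\epsilon)\to\plotdom{p}$ with $c(0)=0$ and $c'(0)=v$ and concludes $[p\circ c,\tfrac{d}{dx}]=[p,v]$ from the defining relation, the only cosmetic difference being that you take the specific straight line $t\mapsto tv$.
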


\begin{proof}
  Let $[p,v]\in \internal{x}{X}$, where $p\in\plots(X,x)$ and $v\in T_0 \plotdom{p}$. Then there exists a curve $c\colon (-\epsilon,\epsilon)\to \plotdom{p}$ such that 
  $c(0)=0$ and $\frac{d}{dx}c(0)=v$. With this $c$, we obtain $[p\circ c,\frac{d}{dx}]=[p,v]$ and $p\circ c$ is a plot whose domain is an open set of $\mathbb{R}$.
\end{proof}

\begin{prop}[\CW, Proposition 3.7] \label{proposition product internal tangent}
  Let $X$, $Y$ be two diffeological spaces, and let $x\in X$, $y\in Y$. Then there is a natural isomorphism of vector spaces
  \[
    \internal{(x,y)}{X\times Y}\cong \internal{x}{X}\times \internal{y}{Y}.
  \]
\end{prop}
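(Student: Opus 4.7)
The plan is to construct explicit mutually inverse linear maps $\Phi$ and $\Psi$ using functoriality applied to the canonical smooth maps of a product. For $\Phi$, use the smooth projections $\prl\colon X\times Y\to X$ and $\prr\colon X\times Y\to Y$ to define
\[
\Phi=(\internalfunctor(\prl),\internalfunctor(\prr))\colon \internal{(x,y)}{X\times Y}\to \internal{x}{X}\times \internal{y}{Y}.
\]
For $\Psi$, use the slice inclusions $i_X\colon X\to X\times Y$, $x'\mapsto (x',y)$, and $i_Y\colon Y\to X\times Y$, $y'\mapsto (x,y')$, which are smooth, and set
\[
\Psi(u_1,u_2)=\internalfunctor(i_X)(u_1)+\internalfunctor(i_Y)(u_2).
\]
Both maps are linear and natural in $(X,x)$ and $(Y,y)$ by the functoriality of $\internalfunctor$. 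That $\Phi\circ\Psi=\id$ is then immediate from the relations $\prl\circ i_X=\id_X$, $\prr\circ i_Y=\id_Y$, together with the fact that $\prl\circ i_Y$ and $\prr\circ i_X$ are constant maps, whose induced tangent maps vanish.

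The substantive direction is $\Psi\circ\Phi=\id$. For a class $[r,z]\in \internal{(x,y)}{X\times Y}$ with $r\colon \plotdom{r}\to X\times Y$ a plot centered at $(x,y)$ and $z\in T_0(\plotdom{r})$, form the product plot $s=(\prl\circ r)\times(\prr\circ r)\colon \plotdom{r}\times\plotdom{r}\to X\times Y$, $(u_1,u_2)\mapsto(\prl r(u_1),\prr r(u_2))$, centered at $(x,y)$. Then $r=s\circ\Delta$ for the diagonal $\Delta\colon \plotdom{r}\to\plotdom{r}\times\plotdom{r}$, so the defining relation of the left Kan extension (\defref{definition internal}) yields $[r,z]=[s,(z,z)]$ under the identification $T_{(0,0)}(\plotdom{r}\times\plotdom{r})\cong T_0(\plotdom{r})\oplus T_0(\plotdom{r})$. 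By linearity of $v\mapsto[s,v]$ in its second argument, $[s,(z,z)]=[s,(z,0)]+[s,(0,z)]$. Applying the relation to the smooth inclusions $\iota_1\colon u\mapsto(u,0)$ and $\iota_2\colon u\mapsto(0,u)$ gives $[s,(z,0)]=[s\circ\iota_1,z]=[i_X\circ\prl\circ r,z]$ and, symmetrically, $[s,(0,z)]=[i_Y\circ\prr\circ r,z]$. Summing and recognising the right-hand side as $\internalfunctor(i_X)[\prl\circ r,z]+\internalfunctor(i_Y)[\prr\circ r,z]=\Psi(\Phi([r,z]))$ closes the loop.

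The main obstacle is the bookkeeping in the second paragraph: one must verify that the factorisation $r=s\circ\Delta$ together with the splitting $(z,z)=(z,0)+(0,z)$ and the two further factorisations $s\circ\iota_1=i_X\circ\prl\circ r$, $s\circ\iota_2=i_Y\circ\prr\circ r$ all interact correctly with the relations defining the quotient in $\internal{(x,y)}{X\times Y}$. Once these identifications are in hand, every other step is a formal consequence of functoriality, and naturality of the isomorphism in $(X,x)$ and $(Y,y)$ follows because $\Phi$ and $\Psi$ are assembled from natural transformations.
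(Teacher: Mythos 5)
Your proof is correct. The paper itself gives no proof of this proposition --- it defers to \CW, Proposition 3.7 --- and your argument is essentially the standard one used there: the projections supply $\Phi$, the slice inclusions supply $\Psi$, the identity $\Phi\circ\Psi=\id$ uses that a constant map induces the zero map on internal tangent spaces (a constant plot factors through $\mathbb{R}^0$, so $[q,v]=0$), and the only substantive point is the factorisation $r=s\circ\Delta$ through the product plot together with the splitting $(z,z)=(z,0)+(0,z)$, all of which you handle correctly via the defining relations of the left Kan extension and the linearity of $v\mapsto[s,v]$.
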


From here, let us discuss some examples of calculations of internal tangent spaces. 
We omit most of the proofs. See \CW. 

\begin{ex} \label{example manifold internal tangent}
  Let $M$ be a manifold, and let $x\in M$. Then the internal tangent space is isomorphic to the standard tangent space 
  because $(M,x)$ is isomorphic to $(U,0)$ as an object of the category $\dflgloc$, where $U$ is a local chart of $M$ at $x$, and $\internalfunctor$ is a strict extension of $\enclzeroloc$.
\end{ex}

\begin{ex}[\CW, Example 3.14] \label{example max/min internal tangent}
  Let $X$ be a set, and let $x\in X$. We denote the diffeological space $(X,\mathscr{D}_X^{max})$ by $X_{max}$ and $(X,\mathscr{D}_X^{min})$ by $X_{min}$
  ($\mathscr{D}_X^{max}$ and $\mathscr{D}_X^{min}$ were defined in \exref{example definition max/min diffeology}). Then there is an isomorphism of vector spaces:
  \[
    \internal{x}{X_{max}}\cong 0,\;\internal{x}{X_{min}}\cong 0. 
  \]
\end{ex}

\begin{ex} \label{example topological space internal tangent}
  Let $T$ be a topological space, and let $x\in T$. We think of the diffeological space $\conti(T)$ defined in \exref{example definition topological space diffeology}. Then there is an isomorphism of vector spaces:
  \[
    \internal{x}{\conti(T)}\cong 0.
  \]
\end{ex}

\begin{ex}[\CW, Example 3.17] \label{example crossquot internal tangent}
  There is an isomorphism of vector spaces, where $\crossquot$ is defined in \exref{example definition cross_quot generating family}:
  \[
    \internal{x}{\crossquot}\cong
    \begin{cases}
      \mathbb{R}^2 & (x=(0,0))\\
      \mathbb{R} & (x\neq (0,0)).
    \end{cases}
  \]
\end{ex}

\begin{ex}[\CW, Example 3.19] \label{example crosssub internal tangent}
  We give the subdiffeology $\mathscr{D}_{sub}$ from $\mathbb{R}^2$ to the set $\cross\subset\mathbb{R}^2$ in \exref{example definition cross_quot generating family}. 
  We denote the diffeological space $(\cross,\mathscr{D}_{sub})$ by $\crosssub$.
  There is an isomorphism of vector spaces:
  \[
    \internal{x}{\crosssub}\cong
    \begin{cases}
      \mathbb{R}^2 & (x=(0,0))\\
      \mathbb{R} & (x\neq (0,0)).
    \end{cases}
  \]
\end{ex}

\begin{remark}{}{} \label{remark crosssub is not diffeo to crossquot}
  This diffeological space $\crosssub$ is not diffeomorphic to $\crossquot$, which was defined in \exref{example definition cross_quot generating family}.
  In fact, the map 
  $p\colon \mathbb{R}\to \cross;t\mapsto 
  \begin{cases}
    (0,e^{1/t}) & (t\leq 0)\\
    (e^{-1/t},0) & (t\geq 0)
  \end{cases}$ is a plot of $\crosssub$, but is not a plot of $\crossquot$ 
  because $p$ does not factor through one of the maps $\alpha$ or $\beta$ (defined in \exref{example definition cross_quot generating family}) in the neighborhood of the point $(0,0)$.
\end{remark}

\begin{ex}[\CW, Example 3.24] \label{example orthogonal action quotient internal tangent}
  We equip $\mathbb{R}^n/O(n)$ with the quotient diffeology from $\mathbb{R}^n$. 
  Let $0$ be the image of $0\in\mathbb{R}^n$ by the natural projection $\mathbb{R}^n\to \mathbb{R}^n/O(n)$.
  There is an isomorphism of vector spaces:
  \[
    \internal{0}{\mathbb{R}^n/O(n)}\cong 0
  \]
\end{ex}

\begin{ex}[\CW, Example 3.25] \label{example half line internal tangent}
  We equip $[0,\infty)$ with the subdiffeology from $\mathbb{R}$. 
  There is an isomorphism of vector spaces:
  \[
    \internal{0}{[0,\infty)}\cong 0
  \]
\end{ex}

\begin{remark}{}{} \label{remark half line is not diffeo to orthogonal quotient}
  This diffeological space $[0,\infty)$ with the subdiffeology is not diffeomorphic to the spaces $\mathbb{R}^n/O(n)$ defined in \exref{example orthogonal action quotient internal tangent}.
  Indeed, the dimension\footnote{For every diffeological space and every point of $X$, the dimension of $X$ at the point is defined. See \IZ, 2.22.} 
  at the origin of $\mathbb{R}^n/O(n)$ is $n$ (\IZ, Exercise 50). However, the dimension at the origin of $\halfsub$ is $\infty$ (\IZ, Exercise 51).
\end{remark}

\begin{ex}[\CW, Example 3.23] \label{example irrational torus internal tangent}
  Let $\theta \in \mathbb{R}\backslash \mathbb{Q}$. Since $\mathbb{Z}+\theta \mathbb{Z}$ is a subgroup of $\mathbb{R}$, 
  we have the quotient group $\mathbb{R}/(\mathbb{Z}+\theta \mathbb{Z})$. The diffeological space $\mathbb{R}/(\mathbb{Z}+\theta \mathbb{Z})$ equipped with the quotient diffeology from $\mathbb{R}$ 
  is denoted by $\irrationaltorus$ and called the irrational torus of slope $\theta$.
  Then for each $x\in \irrationaltorus$, there is an isomorphism of vector spaces:
  \[
    \internal{x}{\irrationaltorus}\cong \mathbb{R}.
  \]
\end{ex}

\begin{ex} \label{example n line gluing internal tangent}
  We equip $\coprod_{\alpha\in A}\mathbb{R}_{\alpha}$ with the sum diffeology where for each $\alpha\in A$, $\mathbb{R}_{\alpha}=\mathbb{R}$ and 
  we define an equivalence relation $\equivrel$ on $\coprod_{\alpha\in A}\mathbb{R}_{\alpha}$ as follows:
  \[
    (x\in\mathbb{R}_{\alpha})\equivrel (y\in\mathbb{R}_{\beta})\Longleftrightarrow x=y\geq 0
  \]
  Let $Y_{A}=(\coprod_{\alpha\in A}\mathbb{R}_{\alpha})/{\equivrel}$ be the quotient diffeological space (when $A=\{1,\dots,n\}$, then we denote $Y_A$ by $Y_n$). There is an isomorphism of vector spaces:
  \[
    \internal{[0]}{Y_{A}}\cong\bigoplus_{\alpha\in A}\mathbb{R}.
  \]
\end{ex}

\begin{proof}
Let $i_{\alpha}\colon\mathbb{R}_{\alpha}\to Y_A$ be the natural inclusion for each $\alpha\in A$.
We construct a linear map $\gamma\colon\internal{[0]}{Y_{A}}\to\bigoplus_{\alpha\in A}\mathbb{R}$. 
Take an element $[p,v]\in\internal{[0]}{Y_{A}}$ where $p\colon \plotdom{p}\to Y_A$ is a plot of $Y_A$ and $v$ is an element of $T_0(\plotdom{p})$. 
Then there exists some $\alpha\in A$ and some smooth map $f\colon W\to \plotdom{p}$ such that ${p|}_W=i_{\alpha}\circ f$, where $W\subset \plotdom{p}$ is an open neighborhood of $0$. 
If there exist two different $\alpha$, $\beta\in A$ and two different smooth maps $f_{\alpha}\colon W_{\alpha}\to\plotdom{p}$, $f_{\beta}\colon W_{\beta}\to\plotdom{p}$ 
such that ${p|}_{W_{\alpha}\cap W_{\beta}}={i_{\alpha}\circ f_{\alpha}|}_{W_{\alpha}\cap W_{\beta}}={i_{\beta}\circ f_{\beta}|}_{W_{\alpha}\cap W_{\beta}}$, then it follows that $d(f_{\alpha})_0=d(f_{\beta})_0=0$. 
Indeed, if one of these derivatives is not equal to zero, then $p$ does not factor through only one of $i_{\alpha}$ and $i_{\beta}$. 
When there exists only one $\alpha\in A$ which satisfies the previous condition, 
we define $\gamma([p,v])\in \bigoplus_{\alpha\in A}\mathbb{R}\cong \bigoplus_{\alpha\in A} T_0(\mathbb{R}_{\alpha})$ to be an element such that 
its $\alpha$-th component is $T(f_{\alpha})v\in T_0(\mathbb{R}_{\alpha})$ and the others are $0$. 
Also, if there exist at least two $\alpha\in A$ which satisfies the previous condition, then we set that $\gamma([p,v])=0$. 
It is clear that this map $\gamma$ is well defined. 
Also, take $\mu\colon\bigoplus_{\alpha\in A}\mathbb{R}\to\internal{[0]}{Y_{A}}$ to be a linear map which sends $j_{\alpha}$ to $[i_{\alpha},\frac{d}{dx}]$, 
where $j_{\alpha}\in \bigoplus_{\alpha\in A}\mathbb{R}$ is the element such that its $\alpha$-th component is $1$ and the others are $0$. 
Since $\gamma$ and $\mu$ are inverses of each other, it means that $\internal{[0]}{Y_{A}}\cong\bigoplus_{\alpha\in A}\mathbb{R}$. 
\end{proof}

\begin{ex}[\CW, Example 3.22] \label{example wire diffeology internal tangent}
  Let $\mathbb{R}^n_{wire}$ be the diffeological space $(\mathbb{R}^n, \mathscr{D}_{wire})$ defined in \exref{example definition wire diffeology}, and let $x\in \mathbb{R}^n_{wire}$. 
  The vector space $\internal{x}{\mathbb{R}^n_{wire}}$ is uncountably-infinite dimensional if $n\geq 2$. 
\end{ex}

\begin{ex} \label{example crushing y-axis internal tangent}
  Let $\equivrel$ be the equivalence relation on $\mathbb{R}^2$ such that $s\equivrel t\Longleftrightarrow s=t$ or $s$ and $t$ are both elements of $\{(x,y)\in \mathbb{R}^2\mid x=0\}$.
  We denote the quotient diffeological space $\mathbb{R}^2/\equivrel$ by $\mathbb{R}^2/\{x=0\}$. Then the internal tangent space of $\mathbb{R}^2/\{x=0\}$ at $[(0,0)]$ is uncountably-infinite dimensional.
\end{ex}

\begin{proof}
For every $t\in\mathbb{R}$, take a smooth map $p_t\colon\mathbb{R}\to \mathbb{R}^2/\{x=0\};x\mapsto [(x,t)]$. 
We prove that all $[p_t,\frac{d}{dx}]\in\internal{[(0,0)]}{\mathbb{R}^2/\{x=0\}}$ for each $t\in \mathbb{R}$ are linearly independent.
To show this, for every $s,t\in\mathbb{R}$ such that $s>t$, define a smooth map $i_{s,t}\colon\mathbb{R}^2/\{x=0\}\to \mathbb{R};[(x,y)]\mapsto x\lambda \left(\dfrac{y-t}{s-t}\right)$, 
where $\lambda\colon\mathbb{R}\to[0,1]$ is a smooth map such that for any $x\leq 0$, $\lambda(x)=0$ and for any $x\geq 1$, $\lambda(x)=1$. 
The map $i_{s,t}$ is smooth for any $s,t\in \mathbb{R}$ because of the universality of the quotient diffeology.
Note that $i_{s,t}\circ p_r=0$ for $r\leq t$ and $i_{s,t}\circ p_r={\id}_{\mathbb{R}}$ for $r\geq s$. 
Let $[p_{t_1},\frac{d}{dx}],\dots,[p_{t_k},\frac{d}{dx}]$ be any finite elements of $\{[p_t,\frac{d}{dx}]\}_{t\in \mathbb{R}}$. 
We rearrange $[p_{t_1},\frac{d}{dx}],\dots,[p_{t_k},\frac{d}{dx}]$ to satisfy that $t_1<\dots<t_k$.
We set that 
\[
  c_1\left[p_{t_1},\frac{d}{dx}\right]+\dots+c_k\left[p_{t_k},\frac{d}{dx}\right]=0
\]
for $c_1,\dots,c_k\in \mathbb{R}$. Applying $\internalfunctor(i_{t_k,t_{k-1}})$ to both sides of this equality, we have $c_k=0$. 
Continuing this process, we finally obtain $c_1=\dots =c_k=0$. This means that $\{[p_t,\frac{d}{dx}]\}_{t\in \mathbb{R}}$ are linearly independent.
Namely, $\internal{[(0,0)]}{\mathbb{R}^2/\{x=0\}}$ is uncountably-infinite dimensional.
\end{proof}

\subsection{Internal tangent bundles} \label{subsection diffeology on internal bundle}
We introduce diffeologies on the internal tangent space. For this purpose, we first briefly review the definitions and properties of concepts related to diffeological vector spaces.
All definitions and properties in this section are found in 
Christensen-Wu \DVS, \IZ, Wu \wu, and \CW.

\begin{defi}[\DVS, Definition 2.3, \wu, Definition 2.1] \label{definition diffeological vector space}
  A \italic{diffeological vector space} is a vector space $V$ together with a diffeology, such that the addition map $V\times V\to V$ 
  and the scalar multiplication map $\mathbb{R}\times V\to V$ are both smooth, where we equip $V\times V$ and $\mathbb{R}\times V$ with the product diffeology. 
\end{defi}

\begin{defi} \label{definition smooth linear map}
  A \italic{smooth linear map} between two diffeological vector spaces is a map which is smooth and linear. 
\end{defi}

For two diffeological vector spaces $V$ and $W$, we denote the set of all smooth linear maps from $V$ to $W$ by $\smoothlinear{V}{W}$, 
and the set of all linear maps from $V$ to $W$ by $\linear{V}{W}$. $\smoothlinear{V}{W}$ with the subdiffeology from $\smoothmap{V}{W}$ is a diffeological vector space. 
Therefore, we always equip $\smoothlinear{V}{W}$ with this diffeology. The evaluation map $V\times\smoothlinear{V}{W}\to W$ is smooth.

Diffeological vector spaces and smooth linear maps between them form a category denoted by $\dvect$. We have the forgetful functors $\dvect\to \dflg$ and $\dvect\to\vect$. 
The category $\dvect$ of diffeological vector spaces is both complete and cocomplete (\wu, Theorem 3.1 and Theorem 3.3). 
Moreover, the forgetful functor $\dvect\to\dflg$ has the left adjoint (\wu, Proposition 3.5), 
and the forgetful functor from $\dvect\to\vect$ has both the left adjoint and the right adjoint.
For any diffeological vector space $V$, the diffeology on the image of $V$ by this right adjoint is called the \italic{fine diffeology}, which is the smallest diffeology making $V$ into a diffeological vector space. 

\begin{defi}[\DVS, Definition 3.1, \IZ, 3.7 and \wu, Definition 5.2] \label{definition fine diffeological vector space}
  A vector space with the fine diffeology is called a \italic{fine diffeological vector space}.
\end{defi}

It is known that every linear subspace and every quotient vector space of a fine diffeological vector space is again a fine diffeological vector space. 
Also, the finite product of fine diffeological vector spaces is again a fine diffeological vector space. 

In order to consider the tangent bundles on diffeological spaces, we introduce some concepts related to bundles whose fibers are diffeological vector spaces. 

\begin{defi} \label{definition vector space with diffeology over X}
  Let $X$ be a diffeological space. \italic{A vector space over $X$} is a diffeological space $V$, a smooth map $p\colon V\to X$ and a vector space structure on each of the fibers $V_x=p^{-1}(x)$. 
  Let $p\colon V\to X$ be vector space over $X$. We call $V$ the \italic{total space} and $p$ the \italic{projection}. 
  The category $\vsdcat$ has as objects the vector spaces with diffeology over diffeological spaces, and as morphisms the commutative squares
  \[
    \xymatrix{
    V \ar[r]^g \ar[d] & W \ar[d]\\
    X \ar[r]^f & Y
    }
  \] 
  in $\dflg$ such that for each $x\in X$, ${g|}_{V_x}\colon V_x\to W_{f(x)}$ is linear.
\end{defi}

\begin{defi}[\CW, Definition 4.5] \label{definition diffeological vector space over X}
  Let $X$ be a diffeological space. A \italic{diffeological vector space over $X$} is a diffeological space $V$, a smooth map $p\colon V\to X$ and a vector space structure on 
  each of the fibers $p^{-1}(x)$ such that the addition map $V\times_X V\to V$ and the scalar multiplication map $\mathbb{R}\times V\to V$ and the zero section $X\to V$ are smooth. 
  Here $\mathbb{R}\times V$ has the product diffeology and $V\times_X V$ has the pullback diffeology, i.e., the subdiffeology from the product space $V\times V$. 
  If $X$ is a point, this coincides with the concept of diffeological vector spaces. 
  We denote by $\dvscat$ the full subcategory of $\vsdcat$ whose objects are diffeological vector spaces over diffeological spaces.
\end{defi}

\begin{prop}[\CW, Proposition 4.16] \label{proposition vsd is complete and cocomplete}
  The category $\vsdcat$ is complete and cocomplete.
\end{prop}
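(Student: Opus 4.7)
The plan is to construct limits and colimits in $\vsdcat$ by combining the corresponding operations in $\dflg$, which is complete and cocomplete by \thref{theorem dflg are cartesian closed}, with limits and colimits in $\vect$ on each fiber. This reflects the structure of $\vsdcat$ as consisting of a smooth map together with a compatible fiberwise vector space structure.

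For completeness, given a small diagram $F\colon J\to\vsdcat$ with components $p_j\colon V_j\to X_j$, I would set $X=\lim_J X_j$ and $V=\lim_J V_j$ in $\dflg$, with the induced smooth projection $p\colon V\to X$. For each $x=(x_j)_j\in X$, the fiber $p^{-1}(x)$ identifies set-theoretically with $\lim_J (V_j)_{x_j}$, a limit in $\vect$, and thereby acquires a canonical vector space structure. Since limits commute with fibered products, one has a natural diffeomorphism $V\times_X V\cong\lim_J(V_j\times_{X_j}V_j)$ in $\dflg$, through which the smooth fiberwise additions on each $V_j$ assemble into a smooth addition $V\times_X V\to V$; analogous reasoning handles scalar multiplication and the zero section. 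The universal property in $\vsdcat$ is then inherited from those of the underlying limits in $\dflg$ and in $\vect$.

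For cocompleteness, it suffices to produce small coproducts and coequalizers. Coproducts of a family $\{V_\alpha\to X_\alpha\}$ are obtained by taking coproducts of total and base spaces in $\dflg$: each fiber is already a fiber of one of the summands, so the fiberwise vector space structure persists automatically, and smoothness of all operations follows since a map out of a $\dflg$-coproduct is smooth iff its restriction to each summand is.

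The coequalizer of a parallel pair $\phi_1,\phi_2\colon(V_1\to X_1)\rightrightarrows(V_2\to X_2)$ is the main obstacle. Its base is the coequalizer $X:=\operatorname{coeq}(X_1\rightrightarrows X_2)$ in $\dflg$, but the total space cannot in general be the $\dflg$-coequalizer of $V_1\rightrightarrows V_2$, because the base maps of $\phi_1$ and $\phi_2$ may send a single $v\in V_1$ into distinct fibers of $V_2$ whose images only coalesce in $X$, and the merged fiber must then admit linear combinations of elements originating in those originally distinct fibers. I would instead construct the total space $V$ as a diffeological quotient of a suitable free diffeological vector bundle generated by $V_2$ over $X$, modulo the subbundle generated by the relations $\phi_1(v)-\phi_2(v)$ for $v\in V_1$, so that each fiber realizes the appropriate coequalizer in $\vect$. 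The key technical step is verifying that this quotient diffeology renders the fiberwise operations smooth, which follows from the universal property of the quotient diffeology (\defref{definition quotient diffeology}), and then checking the universal property of the resulting object as a coequalizer in $\vsdcat$.
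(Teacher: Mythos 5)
The paper does not actually prove this statement; it is quoted from \CW, Proposition 4.16, so there is no internal argument to compare yours against, and your proposal has to stand on its own. The limit and coproduct halves essentially do: the limit of total spaces over the limit of base spaces has fibres that are limits in $\vect$ of the fibre diagrams (since the forgetful functor $\vect\to\set$ preserves limits), and coproducts are computed summand-wise. However, you repeatedly invoke smoothness of the fibrewise operations, both as a hypothesis (``the smooth fiberwise additions on each $V_j$ assemble into a smooth addition'') and as an obligation (``verifying that this quotient diffeology renders the fiberwise operations smooth''). In $\vsdcat$ as defined in \defref{definition vector space with diffeology over X} the addition and scalar multiplication are \emph{not} required to be smooth; that requirement is exactly what cuts out the subcategory $\dvscat$ of \defref{definition diffeological vector space over X}. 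So the assembled-addition step rests on a false premise about general objects of $\vsdcat$, and at the same time both it and the corresponding check for the coequalizer are unnecessary: you appear to be targeting the wrong category.

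The genuine gap is in the coequalizer, where you correctly diagnose why the naive $\dflg$-coequalizer of total spaces fails but only gesture at the repair. The ``free diffeological vector bundle generated by $V_2$ over $X$'' is neither constructed nor shown to exist: you must specify its fibres and, crucially, its diffeology, and check that its structure map to $X$ is smooth. Moreover, the relations you impose are incomplete: if the fibres of the free object are free vector spaces on the underlying sets of the fibres of $V_2$ (regrouped over $X=\operatorname{coeq}(X_1\rightrightarrows X_2)$), then quotienting only by $\phi_1(v)-\phi_2(v)$ does not recover the linear structure already present on $V_2$; you must also identify formal combinations of elements of a single fibre $(V_2)_y$ with their actual combinations, or instead take the fibrewise direct sum $\bigoplus_{y'\in[y]}(V_2)_{y'}$ and quotient by the span of the elements $\phi_1(v)-\phi_2(v)$. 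Finally, the universal property must be verified: with the diffeology on the resulting total space generated by the image of $\mathscr{D}_{V_2}$, one still has to check that the induced fibrewise-linear map to an arbitrary cocone is smooth. None of this is unfixable --- precisely because $\vsdcat$ imposes no smoothness on the operations, the generated diffeology suffices --- but as written the colimit half is a plan rather than a proof.
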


\begin{ex} \label{example definition tangent bundle on Euclidean open set}
  Let $U$ be a Euclidean open set. We have the tangent bundle $\pi_U\colon TU \to U$ in the standard meaning, where $TU=\coprod_{x\in U}T_x U$. 
  We equip $TU$ with the standard manifold diffeology. With this diffeology, $\pi_U$ is a vector space over $U$. This is also a diffeological vector space over $X$.
  Let $f\colon U\to V$ be a smooth map between two Euclidean open sets. Then we have the induced map $Tf\colon TU\to TV$ between tangent bundle. 
  The following commutative diagram is a morphism in $\dvscat$ and $\vsdcat$:
  \[
    \xymatrix{
    TU \ar[r]^{Tf} \ar[d] & TV \ar[d]\\
    U \ar[r]^f & V
    }
  \]
  By this correspondence, we have the tangent bundle functors $T^{dvs}\colon \encl\to\dvscat$, $T^{vsd}\colon \encl\to\vsdcat$.
  We can also do this construction by using the category $\mfd$ instead of $\encl$.
\end{ex}

With these in mind, we are now ready to define diffeologies on the internal tangent bundle.

\begin{defi}[\CW, Definition 4.1, Definition 4.10] \label{definition internal tangent bundle}
  We define $\hectorbundle\colon\dflg\to\vsdcat$ as the left Kan extension of $T^{vsd}\colon\encl\to\vsdcat$ along the inclusion functor $i\colon \encl\to\dflg$ (\exref{example definition tangent bundle on Euclidean open set}), i.e., 
  $\hectorbundle=\Lan_{i} T^{vsd}$. Also, we define $\dvsbundle\colon\dflg\to\dvscat$ as the left Kan extension of $T^{dvs}\colon\encl\to\dvscat$ along the inclusion functor $i\colon \encl\to\dflg$, i.e., 
  $\dvsbundle=\Lan_{i} T^{dvs}$.

  Let $X$ be a diffeological space. We often denote the projection of $\hectorbundle (X)$ by $\pi^{vsd}_X$ and the total space by $\hectorbundle (X)$. 
  Using this notation, $\pi^{vsd}_X\colon\hectorbundle(X)\to X$ is an object of the category $\vsdcat$.
  Also, we often denote the projection of $\dvsbundle (X)$ by $\pi^{dvs}_X$ and the total space by $\dvsbundle (X)$. 
\end{defi}
These definitions are equivalent to the definitions in \CW, Definition 4.1 and 4.10, as proved in \CW, Theorem 4.17. 
Let $X$ be a diffeological space, and let $x\in X$. We often equip the fiber of $x$ with the subdiffeology from $\hectorbundle(X)$ and $\dvsbundle(X)$ respectively.
Since the fiber of $x$ is bijective to the set $\internal{x}{X}$, we often write these diffeological vector spaces as $\hectorinternal{x}{X}$ and $\dvsinternal{x}{X}$ respectively.
We consider $\hectorbundle(X)$ as a diffeological space whose base set is the set $\coprod_{x\in X} \internal{x}{X}$.
In this case, the diffeology on $\hectorbundle(X)$ is generated by $i_U\circ p\circ f\colon \plotdom{f}\to \coprod_{x\in X} \internal{x}{X}$, 
where $p\colon \plotdom{p}\to TU$ is a plot of $TU$, $i_U\colon TU\to\coprod_{x\in X} \internal{x}{X}$ is the natural morphism, and $f\colon \plotdom{f}\to\plotdom{p}$ is a smooth map between two Euclidean open sets. 
Also, the diffeology on $\dvsbundle(X)$ is the smallest diffeology which contains the diffeology of $\hectorbundle(X)$ and makes $\coprod_{x\in X} \internal{x}{X}\to X$ into a diffeological vector space over $X$. 

\begin{prop}[\CW, Proposition 4.22] \label{proposition internal tangent of fine diffeological vector space}
  Let $V$ be a fine diffeological vector space. Then $\internal{0}{V}\cong V$ as diffeological vector spaces.
\end{prop}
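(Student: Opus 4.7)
The plan is to exhibit mutually inverse smooth linear maps between $V$ and $\internal{0}{V}$, exploiting the structural property of fine diffeological vector spaces that every plot $p\colon U\to V$ locally factors as $p|_{U_0} = \phi\circ r$, where $\phi\colon\mathbb{R}^n\to V$ is a smooth linear map and $r\colon U_0\to\mathbb{R}^n$ is smooth.

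Define $\alpha\colon V\to \internal{0}{V}$ by $\alpha(v) = [c_v,\partial_t|_0]$, where $c_v\colon\mathbb{R}\to V$, $t\mapsto tv$, is a plot by smoothness of scalar multiplication; linearity of $\alpha$ follows from the equivalence relation in \defref{definition internal}. For the inverse, one first constructs intrinsically a ``derivative at the origin'' $dp_0\colon T_0U\to V$ for each plot $p\colon U\to V$ pointed at $0$, by the formula $dp_0(w) = \phi(dr_0(w))$ coming from any local factorization. Independence of the factorization follows by passing to the finite-dimensional subspace of $V$ containing both images, on which the fine diffeology restricts to the standard manifold diffeology, so that the derivative is uniquely determined by ordinary calculus. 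One then sets $\beta\colon\internal{0}{V}\to V$, $\beta([p,w]) = dp_0(w)$; compatibility with the defining equivalence relation of \defref{definition internal} reduces to the chain rule $d(q\circ f)_0 = dq_0\circ df_0$.

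Checking that $\alpha$ and $\beta$ are mutually inverse: $\beta(\alpha(v)) = dc_v|_0(\partial_t) = v$ is immediate. For $\alpha(\beta([p,w]))$, write $p|_{U_0} = \phi\circ r$ locally and apply the Kan extension relation to get $[p,w] = [\phi,dr_0(w)]$. Setting $u = dr_0(w)$ and using $\phi\circ\gamma_u = c_{\phi(u)}$ for the curve $\gamma_u(t) = tu$, the relation gives $[\phi,u] = [c_{\phi(u)},\partial_t] = \alpha(\phi(u)) = \alpha(\beta([p,w]))$.

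Finally, for smoothness, given any plot $q\colon U\to V$, the product plot $\tilde{q}\colon U\times\mathbb{R}\to V$, $(u,t)\mapsto t\,q(u)$, combined with the smooth section $U\to T(U\times\mathbb{R})$, $u\mapsto ((u,0),(0,\partial_t))$, realizes $u\mapsto \alpha(q(u))$ as a generating plot of $\hectorbundle(V)$ lying in $\hectorinternal{0}{V}$, showing $\alpha$ is smooth. Conversely, every generating plot of $\hectorbundle(V)$ landing in the fiber over $0$ factors through some $TU\to \hectorbundle(V)$ induced by a plot $p\colon U\to V$, and its composition with $\beta$ is the smooth map $TU\to V$, $(u,w)\mapsto dp_u(w) = \phi(dr_u(w))$, smooth by smoothness of $\phi$ and $dr$. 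The main obstacle is the intrinsic construction and well-definedness of the derivative $dp_0\colon T_0 U\to V$ independent of all local factorizations; once this is established, both the algebraic isomorphism and the smoothness follow without difficulty.
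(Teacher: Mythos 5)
The paper does not actually prove this statement: it is quoted from \CW~(Proposition 4.22) and used as a black box, so there is no in-paper argument to compare yours against. Your proof is essentially the standard one and is correct in outline: the local factorization $p|_{U_0}=\phi\circ r$ through a finite-dimensional subspace is exactly the characterization of the fine diffeology, the well-definedness of $dp_0$ via ordinary calculus on that finite-dimensional subspace is the right key point, and the verification that $\alpha$ and $\beta$ are mutually inverse and smooth via generating plots of $\hectorbundle(V)$ goes through.

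Three small points should be tightened. First, in the Kan-extension relation $[p,w]=[\phi,dr_0(w)]$ the map $r$ must be a morphism in the comma category over $\inclzero$, i.e.\ origin-preserving; since only $\phi(r(0))=p(0)=0$ is guaranteed, replace $r$ by $r-r(0)$ (which changes neither $\phi\circ r$ nor $dr_0$, as $r(0)\in\ker\phi$). Second, for the smoothness of $\beta$ you implicitly use the derivative $dp_u$ at arbitrary base points $u$, not just $u=0$; you should say that the same factorization argument defines a fiberwise-linear bundle map $\hectorbundle(V)\to V$ whose restriction to the fiber over $0$ is $\beta$, and check smoothness of that. Third, the statement leaves open which fiber diffeology is meant ($\hectorinternal{0}{V}$ or $\dvsinternal{0}{V}$); your argument directly gives the isomorphism for $\hectorinternal{0}{V}$, and one extra remark (the dvs diffeology contains Hector's, and a fiberwise-linear map into a diffeological vector space that is smooth for Hector's diffeology remains smooth for the dvs one) transfers it to $\dvsinternal{0}{V}$. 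None of these affects the substance of the argument.
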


Let $X$ be a diffeological space, and let $x\in X$. Let $G$ be a group which acts on $X$. 
In this section, we would like to consider an action such that for every $g\in G$, the induced map $g\cdot \colon X\to X$ is smooth.
We call such an action a \italic{pointwise smooth action}.
In this situation, we have induced linear action of $G_x$ on $\internal{x}{X}$, where $G_x=\{g\in G\mid g\cdot x=x\}$ is the stabilizer of $x$.

\begin{prop} \label{proposition definition linear action on internal tangent}
  Let $X$ be a diffeological space, let $x\in X$, and let $G$ be a group. 
  If $\varphi$ is a pointwise smooth action of $G$ on $X$, 
  then $\varphi$ induces a smooth linear action of $G_x$ on $\internal{x}{X}$.
\end{prop}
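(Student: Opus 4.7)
The plan is to exploit functoriality of the internal tangent construction. For each $g \in G_x$, pointwise smoothness of $\varphi$ gives a smooth endomorphism $\varphi_g \colon X \to X$, and the condition $g \in G_x$ ensures $\varphi_g(x) = x$. Thus $\varphi_g$ defines a morphism $(X,x) \to (X,x)$ in $\dflgbased$, and I take the action of $g$ on $\internal{x}{X}$ to be the induced linear map $\internalfunctor(\varphi_g)$; by \proref{proposition internal tangent space morphism} this has the concrete description $[p,v] \mapsto [\varphi_g \circ p, v]$.

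The group axioms then follow directly from functoriality of $\internalfunctor$: since $\varphi$ is an action, $\varphi_e = \id_X$ and $\varphi_{gh} = \varphi_g \circ \varphi_h$, so $\internalfunctor(\varphi_e) = \id$ and $\internalfunctor(\varphi_{gh}) = \internalfunctor(\varphi_g) \circ \internalfunctor(\varphi_h)$. Linearity of each $\internalfunctor(\varphi_g)$ is automatic since $\internalfunctor$ lands in $\vect$.

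The substantive point is smoothness of $g \cdot \colon \internal{x}{X} \to \internal{x}{X}$ in either of the natural diffeologies on $\internal{x}{X}$, and for this I would pass from $\internalfunctor$ to its bundle enhancements $\hectorbundle$ and $\dvsbundle$ of \defref{definition internal tangent bundle}. These are functors into $\vsdcat$ and $\dvscat$, so $\hectorbundle(\varphi_g)$ and $\dvsbundle(\varphi_g)$ are morphisms of (diffeological) vector spaces over $X$ whose total-space maps are smooth; because $\varphi_g(x) = x$, the fiber over $x$ is sent to itself, and the restriction is a smooth linear self-map of $\hectorinternal{x}{X}$ (resp.\ $\dvsinternal{x}{X}$). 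The only step requiring genuine attention is this matching of source and target fibers, which is precisely where the stabilizer hypothesis $g \in G_x$ enters; beyond that, the proof is pure functoriality and presents no analytic obstacle.
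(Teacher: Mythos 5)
Your proposal is correct and follows essentially the same route as the paper: the action of $g\in G_x$ on $\internal{x}{X}$ is defined as $\internalfunctor(\varphi_g)$ (using $\varphi_g(x)=x$), and the group axioms are immediate from functoriality. You actually go a step further than the paper's proof, which only records a linear action $G_x\to \gl{\internal{x}{X}}$ without commenting on smoothness; your use of the bundle functors $\hectorbundle$ and $\dvsbundle$ to see that each $\tilde{\varphi}_g$ restricts to a smooth linear self-map of the fiber over $x$ is a valid way to supply the ``smooth'' part of the claim.
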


\begin{proof}
For each $g\in G$, we denote the induced diffeomorphism by $\varphi_g\colon X\to X;y\mapsto g\cdot y$.
Let $g\in G_x$. Since $\varphi_g(x)=x$ holds, we have the induced linear isomorphism $\internalfunctor(\varphi_g)\colon\internal{x}{X}\to \internal{x}{X}$.
For $g\in G_x$, we write $\tilde{\varphi}_g=\internalfunctor(\varphi_g)$. The correspondence $G_x\to \gl{\internal{x}{X}};g\mapsto \tilde{\varphi}_g$ defines a linear action of $G_x$ on $\internal{x}{X}$.
\end{proof}

Let $\varphi$ be a pointwise smooth action of a group $G$ on a diffeological space $X$, and let $x\in X$. 
In what follows, we denote the induced linear action of $G_x$ on $\internal{x}{X}$ by $\tilde{\varphi}$. 
Also, for $g\in G_x$, we denote the induced linear isomorphism $\internal{x}{X}\to\internal{x}{X}$ by $\tilde{\varphi}_g$.

\begin{prop} \label{proposition induced linear map from internal tangent of quotient}
  Let $X$ be a diffeological space, let $x\in X$, let $G$ be a group, and let $\varphi$ be a pointwise smooth action of $G$ on $X$. Let $\pi\colon X\to X/G_x$ be the standard projection.
  The map $\internalfunctor(\pi)\colon\internal{x}{X}\to \internal{[x]}{X/G_x}$ induces a smooth linear map $(\internal{x}{X})_{G_x}\to\internal{[x]}{X/G_x}$, 
  where for a diffeological vector space $V$ and a smooth linear action of $H$ on $V$, we write $V_H=V/\langle v-h\cdot v\mid v\in V, h\in H\rangle$ for the $H$-coinvariant.
\end{prop}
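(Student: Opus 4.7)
The strategy is to handle the algebra and the smoothness separately: first verify that $\internalfunctor(\pi)$ factors through the coinvariant quotient as a linear map, then upgrade this to a statement about diffeologies.

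The key algebraic observation is that the quotient map $\pi\colon X\to X/G_x$ satisfies $\pi\circ \varphi_g = \pi$ for every $g\in G_x$, because points in the same $G_x$-orbit are identified in $X/G_x$. Applying the functor $\internalfunctor$ and using $\tilde{\varphi}_g=\internalfunctor(\varphi_g)$, we obtain
\[
  \internalfunctor(\pi)\circ \tilde{\varphi}_g = \internalfunctor(\pi)
  \qquad (g\in G_x).
\]
Consequently every element $v-\tilde{\varphi}_g(v)$ with $v\in\internal{x}{X}$ and $g\in G_x$ lies in $\ker\internalfunctor(\pi)$, so the subspace $\langle v-g\cdot v\mid v\in\internal{x}{X},\,g\in G_x\rangle$ is contained in $\ker\internalfunctor(\pi)$. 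The universal property of the quotient vector space then yields a unique linear map $\bar{\internalfunctor(\pi)}\colon(\internal{x}{X})_{G_x}\to\internal{[x]}{X/G_x}$ making the triangle commute.

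For smoothness, equip both $\internal{x}{X}$ and $\internal{[x]}{X/G_x}$ with their diffeological vector space structures as the fibres $\dvsinternal{x}{X}$ and $\dvsinternal{[x]}{X/G_x}$ of the tangent bundle $\dvsbundle$ (\defref{definition internal tangent bundle}). Functoriality of $\dvsbundle$ gives a smooth morphism of bundles over $\pi$, whose restriction to the fibre at $x$ is precisely $\internalfunctor(\pi)$; hence $\internalfunctor(\pi)$ is smooth linear. The coinvariant $(\dvsinternal{x}{X})_{G_x}$ is a quotient of a diffeological vector space by a linear subspace, and with the quotient diffeology it is again a diffeological vector space such that the projection is a subduction. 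Since $\internalfunctor(\pi)$ is smooth and constant on fibres of this subduction (by the algebraic step above), the universal property of the quotient diffeology (\defref{definition quotient diffeology}) guarantees that $\bar{\internalfunctor(\pi)}$ is smooth, completing the proof.

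The main obstacle is really just a bookkeeping one: being explicit about which diffeology is placed on the coinvariant space, so that the universal property of the quotient diffeology can be applied to promote a linear factorisation at the level of underlying vector spaces into a smooth linear factorisation in $\dvect$. Once that choice is made, everything reduces to functoriality of $\internalfunctor$ (or $\dvsbundle$) combined with the fact that $\pi\circ\varphi_g=\pi$ for $g\in G_x$.
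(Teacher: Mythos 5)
Your proposal is correct and follows essentially the same route as the paper: both rest on the identity $\pi\circ\varphi_g=\pi$ for $g\in G_x$, which the paper verifies on generators $[p,v]$ and you obtain by functoriality, followed by the universal property of the quotient. Your treatment of the smoothness step is somewhat more explicit than the paper's one-line appeal to the universality of the quotient diffeological vector space, but it is the same argument.
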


\begin{proof}
Let $g\in G_x$, and let $[p,v]\in \internal{x}{X}$, where $p\colon \plotdom{p}\to X$ is a plot and $v\in T_0 \plotdom{p}$ is a tangent vector. 
We have:
\[
  \internalfunctor(\pi)(\tilde{\varphi}_g[p,v])=\internalfunctor(\pi)([\varphi_g\circ p,v])=[\pi\circ\varphi_g\circ p,v]=[\pi\circ p,v]=\internalfunctor(\pi)([p,v]).
\]
Therefore, the map $\internalfunctor(\pi)$ induces a smooth linear map $(\internal{x}{X})_{G_x}\to\internal{[x]}{X/G_x}$ because of the universality of the quotient diffeological vector space.
\end{proof}
If we further assume that $X$ is a fine diffeological vector space and that the action is linear in the setting of \proref{proposition induced linear map from internal tangent of quotient},
the induced action of $G_x$ on $\internal{x}{X}$ is isomorphic to the restricted action of $G_x$ on $X$ through the isomorphism $\internal{x}{X}\cong X$ (\proref{proposition internal tangent of fine diffeological vector space}).
Moreover, the induced map in \proref{proposition induced linear map from internal tangent of quotient} is a linear diffeomorphism. We prove this in the next corollary. 

\begin{cor} \label{corollary internal tangent of quotient of fine vector space}
  Let $V$ be a fine diffeological vector space, let $G$ be a group, and let $\varphi$ be a linear action of $G$ on $V$. 
  Then the induced map $\internalfunctor(\pi)\colon(\internal{0}{V})_G\to\internal{[0]}{V/G}$ in \proref{proposition induced linear map from internal tangent of quotient} is an isomorphism 
  (automatically, this action $\varphi$ is a smooth action and $0$ is a fixed point of $\varphi$ because this action is a linear action and $V$ is fine). 
  In particular, there is an isomorphism of diffeological vector spaces:
  \[
    \internal{[0]}{V/G}\cong V_G.
  \]
\end{cor}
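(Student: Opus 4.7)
The plan is to transport everything across the isomorphism of \proref{proposition internal tangent of fine diffeological vector space}, reduce the question to invertibility of a smooth linear map $V_G \to \internal{[0]}{V/G}$, and handle the set-theoretic bijection and the smoothness of the inverse separately.

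First, I would identify $\internal{0}{V}\cong V$ via \proref{proposition internal tangent of fine diffeological vector space}; this is an isomorphism of diffeological vector spaces sending $v\in V$ to $[c_v,\partial/\partial t]$, where $c_v(t)=tv$. Under this identification, a direct check on generators shows that the induced action $\tilde\varphi_g$ on $\internal{0}{V}$ corresponds to the original $\varphi_g$ on $V$, because linearity of $\varphi_g$ yields $\varphi_g\circ c_v=c_{g\cdot v}$. Hence $(\internal{0}{V})_G\cong V_G$ as diffeological vector spaces, and the task reduces to showing that the smooth linear map $V_G\to\internal{[0]}{V/G}$ induced by $\internalfunctor(\pi)$ is a linear diffeomorphism.

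Surjectivity is almost immediate: any generator $[p,v]$ of $\internal{[0]}{V/G}$ with $p\in\plots(V/G,[0])$ admits, by the quotient diffeology, a local factorization $p|_W=\pi\circ\tilde p$ for some smooth $\tilde p\colon W\to V$, and since the linear action fixes $0$ the orbit of $0$ is $\{0\}$, so $\tilde p(0)=0$ and $[p,v]=\internalfunctor(\pi)[\tilde p,v]$. For injectivity, rather than directly verifying that two smooth local lifts of the same plot of $V/G$ have derivatives differing by an element of $\langle w-g\cdot w\rangle$, I would use duality: every linear functional on $V_G$ is a $G$-invariant linear functional $\tilde\ell\colon V\to\mathbb{R}$; such $\tilde\ell$ is automatically smooth because $V$ is fine, and $G$-invariance lets it descend to a smooth $\bar\ell\colon V/G\to\mathbb{R}$ via the universal property of the quotient diffeology. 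Applying $\internalfunctor$ and using $\internal{0}{\mathbb{R}}\cong\mathbb{R}$, the composition $\internalfunctor(\bar\ell)\circ\internalfunctor(\pi)$ restricts under $\internal{0}{V}\cong V$ to $\tilde\ell$ itself. Because $G$-invariant linear functionals separate points of $V_G$ (a purely algebraic fact about the quotient $V/\langle w-g\cdot w\rangle$), injectivity follows.

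The hard part will be showing the inverse is smooth. My approach is to exploit the explicit description of the diffeology on the total space $\dvsbundle(V/G)$: any plot $p\colon W\to\internal{[0]}{V/G}$ is, after shrinking, of the form $x\mapsto[q,v(x)]$ where $q$ is a plot of $V/G$ centered at $[0]$ and $v\colon W\to T_0\plotdom{q}$ is smooth. Choosing a smooth lift $\tilde q\colon\plotdom{q}\to V$ with $\tilde q(0)=0$, the candidate inverse applied to $p$ becomes $x\mapsto [d\tilde q_0(v(x))]\in V_G$, which is visibly smooth, since $d\tilde q_0\colon T_0\plotdom{q}\to V$ is linear, hence smooth by the fine structure of $V$, and $V\to V_G$ is a smooth linear quotient. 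Well-definedness of this assignment on the colimit relations and its independence from the choice of local lift are handled by the same duality argument used for injectivity: any two such choices are identified by every $G$-invariant linear functional, so they agree in $V_G$.
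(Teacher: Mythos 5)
Your reduction to the map $V_G\to\internal{[0]}{V/G}$, your surjectivity argument (local lifting through $\pi$ plus the observation that the orbit of $0$ is $\{0\}$), and your injectivity argument via $G$-invariant linear functionals are all correct, but they follow a genuinely different route from the paper. The paper does not argue injectivity and surjectivity separately at all: it constructs a two-sided inverse functorially. The natural linear surjection $V\to V_G$ is constant on $G$-orbits, hence descends to a smooth map $\gamma\colon V/G\to V_G$ by the universal property of the quotient diffeology; applying $\internalfunctor$ and using that $V_G$ is again a fine diffeological vector space (a quotient of a fine space is fine), so that $\internal{[0]}{V_G}\cong V_G$ by \proref{proposition internal tangent of fine diffeological vector space}, one gets a smooth linear map $\internal{[0]}{V/G}\to V_G$ which is readily checked to invert the map induced by $\internalfunctor(\pi)$. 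What this buys is substantial: smoothness and well-definedness of the inverse are automatic, since the inverse is literally $\internalfunctor$ applied to a smooth map. Your duality argument is a nice self-contained alternative for the statement at the level of vector spaces, and in fact your functional $\bar\ell$ is a special case of the paper's $\gamma$ (namely $\ell\circ\gamma$).

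The one place where your proposal has a genuine gap is the smoothness of the inverse. You assert that every plot of $\internal{[0]}{V/G}$ is locally of the form $x\mapsto[q,v(x)]$ for a single plot $q$ centered at $[0]$ and a smooth $v\colon W\to T_0\plotdom{q}$. This is not justified, and it is not quite what the subdiffeology on the fibre gives you: a plot of the fibre is a plot of the total space landing in the fibre, and when such a plot is locally factored through the generating maps $T\plotdom{q}\to\hectorbundle(V/G)$, the resulting $v(x)$ lives in $T_{b(x)}\plotdom{q}$ for a varying base point $b(x)\in q^{-1}([0])$, not in $T_0\plotdom{q}$; moreover, for the $\dvsbundle$-diffeology (the one for which the fibre is a diffeological vector space) you must additionally close under fibrewise sums and scalar multiples. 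Each of these complications can be handled --- the orbit $\pi^{-1}([0])=\{0\}$ forces any local lift $\tilde q$ to vanish on $q^{-1}([0])$, and your candidate inverse is linear --- but none of this is addressed in the proposal, and it is precisely the work that the paper's construction of the inverse as $\internalfunctor(\gamma)$ renders unnecessary. I would recommend replacing the last step by that functorial argument, or else supplying the missing description of the fibre diffeology.
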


\begin{proof}
Since $V$ is a fine diffeological vector space, $\internal{0}{V}\cong V$ as diffeological vector spaces (\proref{proposition internal tangent of fine diffeological vector space}). 
Through this isomorphism, $\varphi$ is isomorphic to the induced linear action $\tilde{\varphi}$ on $\internal{0}{V}$. 
Therefore, we have the induced smooth linear map $V_G\to\internal{[0]}{V/G}$. We construct the inverse. 
By the universality of the quotient diffeology, 
we have the natural smooth map $\gamma\colon V/G\to V_G$, where $V/G$ is a quotient diffeological space which may not necessarily be a vector space.
It induce a smooth linear map $\internalfunctor(\gamma)\colon\internal{[0]}{V/G}\to\internal{[0]}{V_G}$. 
Since $V_G$ is a quotient diffeological vector space of a fine diffeological vector space $V$, $V_G$ is also a fine diffeological vector space. 
Therefore, $\internal{[0]}{V_G}\cong V_G$ as diffeological vector spaces (\proref{proposition internal tangent of fine diffeological vector space}) 
and this gives us a natural smooth linear map $\internalfunctor(\gamma)\colon\internal{[0]}{V/G}\to V_G$.
It is straightforward to verify that this map is the inverse of the map $V_G\to\internal{[0]}{V/G}$. 
\end{proof}
  We want a stronger sufficient condition that the map in \proref{proposition induced linear map from internal tangent of quotient} is an isomorphism. However, we do not know such a useful condition. 

\begin{ex} \label{example internal tangent horizontal quotient}
  Consider the pointwise smooth action of $\mathbb{R}$ on $\mathbb{R}^2$ given by $t\cdot (x,y)=(x+ty,y)$ for each $t\in \mathbb{R}$. 
  We write $X$ for the quotient diffeological space of $\mathbb{R}^2$ by this action. 
  There is an isomorphism of diffeological vector spaces:
  \[
    \internal{[(0,0)]}{X}\cong \mathbb{R}.
  \]
\end{ex}

\begin{proof}
Since this is a linear action on the fine diffeological vector space $\mathbb{R}^2$, we have $\internal{[(0,0)]}{X}\cong {{\mathbb{R}}^2}_{\mathbb{R}}$ (\corref{corollary internal tangent of quotient of fine vector space}). 
Two elements $(1,1)$ and $(0,1)\in \mathbb{R}^2$ are the same elements in ${{\mathbb{R}}^2}_{\mathbb{R}}$ because $1\cdot (0,1)=(1,1)$. 
Therefore, it follows that ${{\mathbb{R}}^2}_{\mathbb{R}}$ is less or equal to $1$-dimensional. 
We set $\alpha\colon\mathbb{R}\to X;t\mapsto [(0,t)]$ and $\beta\colon X\to \mathbb{R};[(a,b)]\mapsto b$. Both of these are smooth maps because of the universality of the quotient diffeology. 
Since $\beta\circ \alpha={\id}_{\mathbb{R}}$ holds, we have $\internalfunctor(\beta)\circ \internalfunctor(\alpha)={\id}_{\internal{0}{\mathbb{R}}}$. 
From this equality, we have at least one nontrivial element of $\internal{[(0,0)]}{X}$. This completes the proof. 
\end{proof}

\begin{ex} \label{example internal tangent eigenvalue>,<1}
  Consider the pointwise smooth action of $\mathbb{R}$ on $\mathbb{R}^2$ given by $t\cdot (x,y)=(2^tx, 2^{-t}y)$ for each $t\in \mathbb{R}$. 
  We write $X$ for the quotient diffeological space of $\mathbb{R}^2$ by this action. 
  There is an isomorphism of diffeological vector spaces:
  \[
    \internal{[(0,0)]}{X}\cong 0.
  \]
\end{ex}

\begin{proof}
  Since this is a linear action on the fine diffeological vector space $\mathbb{R}^2$, we have $\internal{[(0,0)]}{X}\cong {{\mathbb{R}}^2}_{\mathbb{R}}$ (\corref{corollary internal tangent of quotient of fine vector space}). 
  Two elements $(1,0)$ and $(2,0)\in \mathbb{R}^2$ are the same elements in ${{\mathbb{R}}^2}_{\mathbb{R}}$ because $2\cdot (1,0)=(2,0)$. 
  Also, two elements $(0,1)$ and $(0,2)\in \mathbb{R}^2$ are the same elements in ${{\mathbb{R}}^2}_{\mathbb{R}}$ because $2\cdot (0,2)=(0,1)$. 
  Therefore, we have ${{\mathbb{R}}^2}_{\mathbb{R}}\cong 0$. This completes the proof. 
\end{proof}
\section{Right tangent spaces and external tangent spaces of diffeological spaces} \label{section external}

\subsection{Right tangent spaces and external tangent spaces}
There are several ways, which are not necessarily equivalent to each other,
to define the tangent space of a diffeological space that extend those for manifolds. 
Here we recall the one called the external tangent space. 

\begin{prop}[\CW, the paragraph before Definition 3.10] \label{proposition germ is diffeological algebra}
  Let $X$ be a diffeological space, and let $x\in X$. We denote the set $\germ(X,x)=\germ((X,x),\mathbb{R})=\locmap((X,x),\mathbb{R})/{\equivrel}$, defined in \defref{definition local map}. 
  Since $\locmap((X,x),\mathbb{R})=\coprod_{B\in \mathscr{O}_x} C^{\infty}(B,\mathbb{R})$, where $\mathscr{O}_x=\{B\subset X\mid \text{$B$ is a $D$-open neighborhood of $x$}\}$, 
  we equip $\locmap((X,x),\mathbb{R})$ with the sum diffeology of the functional diffeology and $\germ(X,x)$ with the quotient diffeology. 
  $\germ(X,x)$ is a diffeological $\mathbb{R}$-algebra with this diffeology and the natural $\mathbb{R}$-algebra structure.
\end{prop}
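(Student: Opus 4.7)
The plan is to verify separately that the algebraic operations are well-defined on germs and that they are smooth with respect to the quotient diffeology, after which $\germ(X,x)$ is a diffeological $\mathbb{R}$-algebra by definition. For well-definedness, given germs $[f],[g]\in\germ(X,x)$ with representatives $f\colon B\to\mathbb{R}$ and $g\colon B'\to\mathbb{R}$, set $[f]+[g]=[f|_{B\cap B'}+g|_{B\cap B'}]$ and proceed similarly for multiplication and for scalar multiplication by $\lambda\in\mathbb{R}$. Independence of representatives follows because restriction to a smaller $D$-open neighborhood respects $\equivrel$ (\defref{definition local map}), and the $\mathbb{R}$-algebra axioms transfer immediately from pointwise operations on representatives.

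The core of the argument is smoothness. I would first establish smoothness of the component maps
\[
    C^{\infty}(B,\mathbb{R})\times C^{\infty}(B',\mathbb{R})\to C^{\infty}(B\cap B',\mathbb{R}),\qquad (f,g)\mapsto f|_{B\cap B'}+g|_{B\cap B'},
\]
and the analogous multiplication and scalar multiplication, for every pair of $D$-open neighborhoods $B,B'$ of $x$. By cartesian closedness of $\dflg$ (\thref{theorem dflg are cartesian closed}), this reduces to smoothness of the adjoint $(f,g,y)\mapsto f(y)+g(y)$ on $C^{\infty}(B,\mathbb{R})\times C^{\infty}(B',\mathbb{R})\times(B\cap B')$, which factors through the restrictions of the two smooth evaluation maps composed with $+\colon\mathbb{R}^2\to\mathbb{R}$; the multiplicative and scalar cases are identical with $\cdot$ in place of $+$. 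Since a plot of the sum diffeology on $\locmap((X,x),\mathbb{R})$ locally factors through a single summand $C^{\infty}(B,\mathbb{R})$, these component maps assemble into a smooth map $\locmap((X,x),\mathbb{R})\times\locmap((X,x),\mathbb{R})\to\locmap((X,x),\mathbb{R})$, and post-composition with the quotient map to $\germ(X,x)$ is smooth as well. To descend to a smooth map out of $\germ(X,x)\times\germ(X,x)$, I would invoke cartesian closedness once more: the functor $-\times-$ preserves colimits in each variable, hence the product of quotient maps $\locmap\times\locmap\to\germ\times\germ$ is a subduction, so a map out of $\germ(X,x)\times\germ(X,x)$ is smooth iff its pullback is. Scalar multiplication is handled in the same way with $\mathbb{R}$ in place of one factor.

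The main technical obstacle is bookkeeping the varying local domains: addition of two local functions forces the resulting domain to shrink to the intersection $B\cap B'$, so one must check compatibility with both the sum-of-functional diffeologies and the quotient by $\equivrel$. This is controlled by the locality axiom of \defref{definition diffeology} and the smoothness of the restriction maps $C^{\infty}(B,\mathbb{R})\to C^{\infty}(B\cap B',\mathbb{R})$, which itself follows from the adjunction defining the functional diffeology; once these are in place, no non-routine step remains.
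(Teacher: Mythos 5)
The paper gives no proof of this proposition: it is quoted from Christensen--Wu (the paragraph before Definition 3.10 there), and the text immediately following merely records the formulas $[g,\coplotdom{g}]+[h,\coplotdom{h}]=[g+h,\coplotdom{g}\cap\coplotdom{h}]$ and $[g,\coplotdom{g}]\cdot[h,\coplotdom{h}]=[gh,\coplotdom{g}\cap\coplotdom{h}]$. Your argument is a correct and complete verification of exactly what is being asserted: the component maps are smooth by cartesian closedness, a plot of the sum diffeology locally lands in a single summand so the components assemble, and the descent to $\germ(X,x)\times\germ(X,x)$ is legitimate because in $\dflg$ (unlike $\topsp$) the product of two subductions is again a subduction, which you correctly justify via preservation of colimits by $-\times C$. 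No gap.
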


We denote an element of $\germ(X,x)$ which is represented by $g\colon \coplotdom{g}\to\mathbb{R}$ by $[g, \coplotdom{g}]$, where $\coplotdom{g}$ is a $D$-open set of $X$.
The sum and product of two elements $[g, \coplotdom{g}], [h, \coplotdom{h}]\in\germ(X,x)$ are $[g+h, \coplotdom{g}\cap\coplotdom{h}]$ and $[gh, \coplotdom{g}\cap\coplotdom{h}]$ respectively.

\begin{defi} \label{definition external}
  Let $X$ be a diffeological space, and let $x\in X$. We call a map $D\colon\germ(X,x)\to\mathbb{R}$ a \italic{right tangent vector} if the following two conditions hold:
  \begin{description}
    \item[Linearity] $D$ is a linear map.
    \item[Leibniz rule] For all $G=[g, \coplotdom{g}], H=[h, \coplotdom{h}]\in \germ(X,x)$, $D(GH)=g(x)D(H)+h(x)D(G)$.
  \end{description}
  We denote the set of all right tangent vectors by $\righttangent{x}{X}$ and we give the natural vector space structure to $\righttangent{x}{X}$. 
  This vector space is called the \italic{right tangent space of $X$ at $x$}.
  Moreover, we call a right tangent vector $D\in\righttangent{x}{X}$ an \italic{external tangent vector} if the following condition also holds:
  \begin{description}
    \item[Smoothness] $D$ is a smooth map.
  \end{description}
  The set $\external{x}{X}$ of all external tangent vectors define the subspace of $\righttangent{x}{X}$, and it is called the \italic{external tangent space of $X$ at $x$}. 
  The external tangent space is the same vector space defined in \CW, Definition 3.10.
\end{defi}

It is well known that $\righttangent{x}{X}$ and $\external{x}{X}$ are both isomorphic to the standard tangent space $T_x X$ if $X$ is an open set of $\mathbb{R}^n$ or more generally, $X$ is a manifold.

We define the induced linear map between right or external tangent spaces. 
Let $f\colon X\to Y$ be a smooth map. We have the induced linear map $\rightfunctor(f)\colon\righttangent{x}{X}\to\righttangent{f(x)}{Y}$ defined as follows:
\[
  \text{For all $G=[g, \coplotdom{g}]\in\germ(Y,f(x))$ and $D\in\righttangent{x}{X}$, $(\rightfunctor(f)(D))(G)=D([g\circ f, f^{-1}(\coplotdom{g})])$}.
\]
We define the right tangent functor $\rightfunctor\colon\dflgbased\to\vect$ by using this correspondence of morphisms. 
Restricting this to external tangent vectors, we define the external tangent functor $\externalfunctor\colon\dflgbased\to\vect$. 
In fact, we have the following proposition.

\begin{prop}[\CW, the paragraph after Definition 3.10] \label{proposition external tangent forms functor}
  Let $X$ and $Y$ be two diffeological spaces, and let $x\in X$. If $D$ is an element of $\external{x}{X}$, then $\rightfunctor(f)(D)$ is an element of $\external{f(x)}{Y}$.
  In particular, we have the \italic{external tangent functor} $\externalfunctor\colon\dflg\to\vect$. 
\end{prop}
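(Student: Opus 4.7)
The plan is to show that the correspondence $D \mapsto \rightfunctor(f)(D)$ preserves smoothness by exhibiting $\rightfunctor(f)(D)$ as a composition of smooth maps. Specifically, observe that the formula
\[
  (\rightfunctor(f)(D))(G) = D([g\circ f,\, f^{-1}(\coplotdom{g})])
\]
factorizes as $\rightfunctor(f)(D) = D \circ f^{*}$, where
\[
  f^{*}\colon \germ(Y,f(x)) \to \germ(X,x),\quad [g,\coplotdom{g}] \mapsto [g\circ f,\, f^{-1}(\coplotdom{g})]
\]
is the pullback of germs along $f$. So the whole proposition reduces to proving that $f^{*}$ is a smooth map of diffeological spaces.

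First I would check that $f^{*}$ is well defined: smoothness of $f$ implies $f$ is continuous with respect to the $D$-topologies, so $f^{-1}(\coplotdom{g})$ is a $D$-open neighborhood of $x$ whenever $\coplotdom{g}$ is a $D$-open neighborhood of $f(x)$, and evidently $g\circ f$ is smooth on $f^{-1}(\coplotdom{g})$. Compatibility with the equivalence relation $\equivrel$ is equally immediate, since agreement of two local functions on a $D$-open neighborhood of $f(x)$ pulls back to agreement of their composites with $f$ on a $D$-open neighborhood of $x$.

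Next I would prove smoothness of $f^{*}$. By \proref{proposition germ is diffeological algebra} the diffeology on $\germ(Y,f(x))$ is the quotient of the sum of the functional diffeologies on the $C^{\infty}(\coplotdom{g},\mathbb{R})$, so it suffices to check that for each $D$-open neighborhood $V$ of $f(x)$ the composite
\[
  C^{\infty}(V,\mathbb{R}) \to C^{\infty}(f^{-1}(V),\mathbb{R}) \to \germ(X,x),\qquad g \mapsto [g\circ f,\, f^{-1}(V)]
\]
is smooth. By the definition of the functional diffeology, smoothness of the first arrow is equivalent to smoothness of the adjoint map $U \times f^{-1}(V) \to \mathbb{R}$ sending $(t,y) \mapsto g_t(f(y))$ whenever $U \to C^{\infty}(V,\mathbb{R}),\,t\mapsto g_t$ is a plot; but this adjoint is the composite of $\id_U \times f\colon U \times f^{-1}(V) \to U\times V$ (smooth because $f$ is) with the adjoint of the original plot (smooth by hypothesis). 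Composing with the quotient map $\locmap((X,x),\mathbb{R})\to\germ(X,x)$ preserves smoothness, so $f^{*}$ is smooth.

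With smoothness of $f^{*}$ established, if $D \in \external{x}{X}$ then $\rightfunctor(f)(D) = D \circ f^{*}$ is smooth as a composition of smooth maps, so it lies in $\external{f(x)}{Y}$. Linearity of $\rightfunctor(f)(D)$ is inherited from $D$ and from the additivity $f^{*}(G+H) = f^{*}G + f^{*}H$, and the Leibniz rule is immediate from $f^{*}(GH) = (f^{*}G)(f^{*}H)$ together with $(g\circ f)(x) = g(f(x))$. Functoriality, $\externalfunctor(g\circ f) = \externalfunctor(g)\circ\externalfunctor(f)$, reduces to the obvious identity $(g\circ f)^{*} = f^{*}\circ g^{*}$ on germs. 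I expect the only subtle point to be the adjoint-smoothness verification above; everything else is formal bookkeeping with the quotient/sum/functional diffeologies.
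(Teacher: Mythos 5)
Your proposal is correct and follows essentially the same route as the paper: both reduce smoothness via the universal properties of the quotient and sum diffeologies to the componentwise maps $C^{\infty}(V,\mathbb{R})\to\mathbb{R}$, and both factor these through precomposition with $f$ followed by the corresponding component of $D$, which is smooth since $D\in\external{x}{X}$. The only difference is packaging (you bundle the components into a germ-level pullback $f^{*}$ and explicitly verify, via the adjoint characterization of the functional diffeology, that precomposition with $f$ is smooth — a step the paper leaves implicit), so there is no substantive divergence.
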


\begin{proof}
Let $D\in\external{x}{X}$. Take $D'=\rightfunctor(f)(D)$. We need to prove that $D'\colon\germ(Y,f(x))\to \mathbb{R}$ is smooth. 
Since $\germ(Y,f(x))$ has the quotient diffeology from $\locmap((Y,f(x)),\mathbb{R})$, 
it is sufficient to prove that the composition map $\locmap((Y,f(x)),\mathbb{R})\to\mathbb{R};g\mapsto D'([g,\coplotdom{g}])$ is smooth 
(by the universality of the quotient diffeology).
Moreover, since $\locmap((X,x),\mathbb{R})=\coprod_{B\in \mathscr{O}_x} C^{\infty}(B,\mathbb{R})$ holds (where $\mathscr{O}_x=\{B\subset X\mid \text{$B$ is a $D$-open neighborhood of $x$}\}$),
it is sufficient to prove that for each $B\in\mathscr{O}_x$, the composition map $C^{\infty}(B,\mathbb{R})\to\mathbb{R};g\mapsto D'([g,\coplotdom{g}])$ is smooth 
(by the universality of the sum diffeology). For simplicity, we denote this map by ${D'}_B$. 
Since this map ${D'}_B$ is the composition of $\precomp{f}\colon\smoothmap{B}{\mathbb{R}}\to\smoothmap{f^{-1}(B)}{\mathbb{R}};g\mapsto g\circ f$ and $D_{f^{-1}(B)}\colon\smoothmap{f^{-1}(B)}{\mathbb{R}}\to \mathbb{R}$, the map ${D'}_B$ is smooth.
Indeed, the map $D_{f^{-1}(B)}$ is smooth because $D$ is in $\external{x}{X}$. 
\end{proof}

If necessary, we also regard these functors as $\rightfunctor, \externalfunctor\colon\dflgloc\to\vect$. 
In fact, we can understand the right tangent space as the image of the right Kan extension of the functor $T^{\enclloc}\colon\enclloc\to\vect$ along the inclusion functor $\inclloc\colon\enclloc\to\dflgloc$ as in the next theorem.
Similarly to elements of $\germ(X,x)$, if an element $G\in \obj((X,x)\downarrow \inclloc)$ is represented by a map $g\colon \coplotdom{g}\to \coplotim{g}$ 
where $\coplotdom{g}$ is a $D$-open set of $X$ and $\coplotim{g}$ is a Euclidean open set, we write $G=[g,\coplotdom{g}]$.
For $G=[g,\coplotdom{g}]\in\obj((X,x)\downarrow \inclloc)$, we sometimes denote the codomain of $g$ by $\coplotim{G}$ and the point $g(x)$ by $G(x)$ 
because the codomain $\coplotim{g}$ and the point $g(x)$ is independent to the representative $g$ of $G$. 

\begin{thm} \label{theorem right tangent space is right Kan extension}
  Let $X$ be a diffeological space, and let $x\in X$. Then there is an isomorphism of vector spaces:
  \[
    \Ran_{\inclloc} T^{\enclloc} (X,x)\cong \righttangent{x}{X}.
  \]
\end{thm}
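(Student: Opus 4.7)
The pointwise right Kan extension formula presents $\Ran_{\inclloc} T^{\enclloc}(X,x)$ as the limit of $T^{\enclloc}$ over the comma category $(X,x) \downarrow \inclloc$. Unpacking the objects, an element of this limit is a compatible family $(v_G)$ indexed by germs $G = [g,\coplotdom{g}] \colon (X,x) \to (\coplotim{G}, G(x))$ of local smooth maps from $X$ into Euclidean open sets, with $v_G \in T_{G(x)} \coplotim{G}$, subject to the compatibility condition that for any germ $[h]\colon (\coplotim{G},G(x)) \to (\coplotim{H},H(x))$ in $\enclloc$ with $H = [h]\circ G$, one has $dh_{G(x)}(v_G) = v_H$. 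The plan is to exhibit explicit mutually inverse linear maps between the space of such families and $\righttangent{x}{X}$.

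First I would define $\Phi \colon \Ran_{\inclloc} T^{\enclloc}(X,x) \to \righttangent{x}{X}$ as follows. A germ $[g,\coplotdom{g}] \in \germ(X,x)$ of a smooth function to $\mathbb{R}$ is exactly an object $G$ of $(X,x) \downarrow \inclloc$ with $\coplotim{G}=\mathbb{R}$, so setting $\Phi((v_G))([g,\coplotdom{g}]) := v_G \in T_{g(x)}\mathbb{R} \cong \mathbb{R}$ gives a well-defined function on $\germ(X,x)$. To verify linearity and Leibniz, for two germs represented by $g,h \colon \coplotdom{} \to \mathbb{R}$ I would consider the germ $K = [(g,h)]\colon (X,x) \to (\mathbb{R}^2, (g(x),h(x)))$. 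Compatibility with the projections yields $v_G = d(\prl)(v_K)$ and $v_H = d(\prr)(v_K)$; compatibility with $+\colon \mathbb{R}^2 \to \mathbb{R}$ and with $\cdot \colon \mathbb{R}^2 \to \mathbb{R}$ then gives $v_{G+H} = v_G + v_H$ (using $d(+)(a,b) = a+b$) and $v_{GH} = g(x)\,v_H + h(x)\,v_G$ (using $d(\cdot)_{(a,b)}(u,w) = aw + bu$). Compatibility with scalar multiplication on $\mathbb{R}$ handles $\Phi((v_G))(cG) = c\,\Phi((v_G))(G)$.

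Next I would build the inverse $\Psi \colon \righttangent{x}{X} \to \Ran_{\inclloc} T^{\enclloc}(X,x)$. For a germ $G = [g,\coplotdom{g}]$ with $g$ landing in an open $U \subset \mathbb{R}^n$, write $g_i = \prl[i] \circ g \colon \coplotdom{g} \to \mathbb{R}$ and set
\[
  (\Psi(D))_G := \bigl(D([g_1,\coplotdom{g}]), \ldots, D([g_n,\coplotdom{g}])\bigr) \in T_{g(x)} U \cong \mathbb{R}^n.
\]
The main technical step is to verify compatibility: for $[h] \colon (U,u) \to (V,v)$ with $h = (h_1,\ldots,h_m)$ and $H = [h]\circ G$, one must show $D([h_j \circ g]) = \sum_i \tfrac{\partial h_j}{\partial x_i}(u)\, D([g_i])$. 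For this I would first note that the Leibniz rule forces $D$ to vanish on constant germs (from $D(1) = 2D(1)$), and then apply Hadamard's lemma on the Euclidean side: write $h_j(y) = h_j(u) + \sum_i (y_i - u_i)\, k_{ji}(y)$ with $k_{ji}(u) = \tfrac{\partial h_j}{\partial x_i}(u)$, substitute $y = g(z)$ to obtain a corresponding identity of germs at $x$, and apply $D$. The Leibniz rule together with $g_i(x) - u_i = 0$ collapses the sum to the desired chain-rule expression. This Hadamard step is the heart of the argument and the main obstacle; everything else is formal.

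Finally I would check that $\Phi$ and $\Psi$ are mutually inverse and linear, which is immediate from the definitions: applying $\Psi$ then $\Phi$ to $D$ on a one-dimensional germ $[g,\coplotdom{g}]$ returns $D([g,\coplotdom{g}])$, and applying $\Phi$ then $\Psi$ to a compatible family $(v_G)$ reconstructs $v_G$ coordinate by coordinate, using compatibility with the coordinate projections $\prl[i] \colon U \to \mathbb{R}$. This yields the desired isomorphism of vector spaces.
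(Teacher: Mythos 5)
Your proposal is correct and follows essentially the same route as the paper: both unpack the right Kan extension as the limit over the comma category $(X,x)\downarrow\inclloc$, send a compatible family to the derivation given by its $\mathbb{R}$-valued components, verify linearity and the Leibniz rule by pairing two germs into $\mathbb{R}^2$ and using compatibility with the projections, addition and multiplication, and invert by pushing a derivation forward along each germ. The only difference is cosmetic: where you establish that $\Psi(D)$ lands in the limit by an explicit Hadamard-lemma computation, the paper delegates this to the classical identification $\righttangent{u}{U}\cong T_u U$ for Euclidean open sets together with functoriality of $\rightfunctor$, which is the same argument in disguise.
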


\begin{proof}
  It should be noted that an element of $\Ran_{\inclloc} T^{\enclloc} (X,x)$ is written 
  as an element of $\{v_G\}_{G\in\obj(\coplotcat)}\in \prod_{G\in\obj(\coplotcat)}T_{G(x)}\coplotim{G}$ which satisfies the following condition:
  \[
    \text{For all $G$, $H\in\obj(\coplotcat)$ and a morphism $s\colon G\to H$ in $\coplotcat$, $T^{\enclloc}(s)v_G=v_H$}.
  \]
  Therefore, we regard an element of $v\in\righttangent{x}{X}$ as an element of $\prod_{G\in\obj(\coplotcat)}T_{G(x)}\coplotim{G}$, 
  and we denote the $G$-th component of $v$ by $v_G$.
  Let $\alpha\colon\righttangent{x}{X}\to\Ran_{\inclloc} T^{\enclloc} (X,x)$ be a linear map defined by the following:
  \begin{equation}
    \label{compatibility}
    \text{For all $G=[g, \coplotdom{g}]\in\obj(\coplotcat)$ and $D\in\righttangent{x}{X}$, $\alpha(D)_G=\rightfunctor(g)(D)$,}
  \end{equation}
  where we regard $\rightfunctor(g)(D)\in T_{G(x)} \coplotim{G}$ because $\righttangent{G(x)}{\coplotim{G}}\cong T_{G(x)} (\coplotim{G})$ as we mentioned before. 
  Next, we define a linear map $\beta\colon\Ran_{\inclloc} T^{\enclloc} (X,x)\to\righttangent{x}{X}$. 
  To do this, let $v\in \Ran_{\inclloc} T^{\enclloc} (X,x)$, and let $G=[g, \coplotdom{g}]\in\germ(X,x)$. We define $\beta(v)$ as follows:
  \[
    \beta(v)([g,\coplotdom{g}])=v_G([{\id}_{\mathbb{R}}, \mathbb{R}]),
  \]
  where we regard $v_G$ as an element of $\righttangent{G(x)}{\coplotim{G}}$ because of the isomorphism $T_{G(x)}(\coplotim{G})\cong\righttangent{G(x)}{\coplotim{G}}$. 
  We need to prove that $\beta(v)$ as defined before is truly a right tangent vector. 
  First, let $G=[g, \coplotdom{g}], H=[h, \coplotdom{h}]\in \germ(X,x)$. 
  We define $(g,h)\colon\coplotdom{g}\cap\coplotdom{h}\to\mathbb{R}^2;y\mapsto (g(y),h(y))$ and $(G,H)=[(g,h),\coplotdom{g}\cap\coplotdom{h}]$.
  Since $v$ satisfies the condition \ref{compatibility}, the following equalities hold:
  \[
    v_{G+H}=T(\plus)v_{(G,H)}, v_G=T(\prl)v_{(G,H)}, v_H=T(\prr)v_{(G,H)},
  \]
  where we define $\plus\colon\mathbb{R}^2\to\mathbb{R};(a,b)\mapsto a+b$, $\prl\colon\mathbb{R}^2\to\mathbb{R};(a,b)\mapsto a$, $\prr\colon\mathbb{R}^2\to\mathbb{R};(a,b)\mapsto b$.
  Therefore, the following equality holds:
  \begin{equation*}
    \begin{split}
    &v_{G+H}([{\id}_{\mathbb{R}},\mathbb{R}])=T(\plus)v_{(G,H)}([{\id}_{\mathbb{R}},\mathbb{R}])=v_{(G,H)}([{\id}_{\mathbb{R}}\circ\plus,{\plus}^{-1}(\mathbb{R})])\\
    &=v_{(G,H)}([\plus,\mathbb{R}^2])=v_{(G,H)}([\prl,\mathbb{R}^2]+[\prr,\mathbb{R}^2])\\
    &=v_{(G,H)}([\prl,\mathbb{R}^2])+v_{(G,H)}([\prr,\mathbb{R}^2])\\
    &=T(\prl)v_{(G,H)}([{\id}_{\mathbb{R}},\mathbb{R}])+T(\prr)v_{(G,H)}([{\id}_{\mathbb{R}},\mathbb{R}])\\
    &=v_G([{\id}_{\mathbb{R}},\mathbb{R}])+v_H([{\id}_{\mathbb{R}},\mathbb{R}]).
    \end{split}
  \end{equation*}
  The fifth equality holds because $v_{(G,H)}\in \righttangent{(G(x),H(x))}{\mathbb{R}^2}$ is a linear map $\germ(\mathbb{R}^2,(G(x),H(x)))\to\mathbb{R}$.
  Similarly, let $G=[g, \coplotdom{g}]\in \germ(X,x)$, and let $t\in\mathbb{R}$. We define $t\colon\mathbb{R}\to\mathbb{R};a\mapsto ta$. 
  The following equality holds:
  \[
    tv_G([{\id}_{\mathbb{R}},\mathbb{R}])=v_G([t\cdot {\id}_{\mathbb{R}},\mathbb{R}])=T(t)v_G([{\id}_{\mathbb{R}},\mathbb{R}])=v_{tG}([{\id}_{\mathbb{R}},\mathbb{R}]).
  \]
  Therefore, we have proved that $\beta(v)$ is a linear map. Second, we prove that $\beta(v)$ satisfies the Leibniz rule. 
  Let $G=[g, \coplotdom{g}], H=[h, \coplotdom{h}]\in \germ(X,x)$. We define $\kakeru\colon\mathbb{R}^2\to\mathbb{R};(a,b)\mapsto ab$. 
  As in the proof of linearity, the following equality holds:
  \begin{equation*}
    \begin{split}
    &v_{GH}([{\id}_{\mathbb{R}},\mathbb{R}])=T(\kakeru)v_{(G,H)}([{\id}_{\mathbb{R}},\mathbb{R}])=v_{(G,H)}([\kakeru,\mathbb{R}^2])=v_{(G,H)}([\prl,\mathbb{R}^2]*[\prr,\mathbb{R}^2])\\
    &=\prl((G(x),H(x)))v_{(G,H)}([\prr,\mathbb{R}^2])+\prr((G(x),H(x)))v_{(G,H)}([\prl,\mathbb{R}^2])\\
    &=G(x)v_H([{\id}_{\mathbb{R}},\mathbb{R}])+H(x)v_G([{\id}_{\mathbb{R}},\mathbb{R}]).
    \end{split}
  \end{equation*}
  The fourth equality holds because $v_{(G,H)}\in \righttangent{(G(x),H(x))}{\mathbb{R}^2}$ satisfies the Leibniz rule. 
  Since we have proved that $\beta(v)$ is a right tangent vector, $\beta$ is an well-defined map. Moreover, it is clear that $\beta$ is a linear map. 
  
  We prove that $\beta\circ\alpha={\id}_{\righttangent{x}{X}}$ and $\alpha\circ\beta={\id}_{\Ran_{\inclloc} T^{\enclloc} (X,x)}$. 
  First, let $D\in\righttangent{x}{X}$, and let $[g,\coplotdom{g}]\in\germ(X,x)$. The following equality holds:
  \begin{equation*}
    \begin{split}
    &\beta\circ\alpha(D)([g,\coplotdom{g}])=\alpha(D)_G([{\id}_{\mathbb{R}},\mathbb{R}])=\rightfunctor(g)(D)([{\id}_{\mathbb{R}},\mathbb{R}])\\
    &=D([{\id}_{\mathbb{R}}\circ g, g^{-1}(\mathbb{R})])=D([g,\coplotdom{g}]).
    \end{split}
  \end{equation*}
  Therefore, $\beta\circ\alpha={\id}_{\righttangent{x}{X}}$ holds. Second, let $v\in\righttangent{x}{X}$, and let $[g,\coplotdom{g}]\in\obj(\coplotcat)$. 
  Since $\alpha\circ \beta(v)_{[g,\coplotdom{g}]}=\rightfunctor(g)(\beta(v))$ holds, it is enough to show that $\rightfunctor(g)(\beta(v))=v_{[g,\coplotdom{g}]}$. 
  For any $[h,\coplotdom{h}]\in\germ(X,x)$, the following equality holds:
  \begin{equation*}
    \begin{split}
    &(\rightfunctor(g)(\beta(v)))([h,\coplotdom{h}])=\beta(v)([h\circ g, g^{-1}(\coplotdom{h})])\\
    &=v_{[h\circ g, g^{-1}(\coplotdom{h})]}([{\id}_{\mathbb{R}}, \mathbb{R}])=T(h)v_{[g, g^{-1}(\coplotdom{h})]}([{\id}_{\mathbb{R}}, \mathbb{R}])\\
    &=\rightfunctor(h)v_{[g, \coplotdom{g}]}([{\id}_{\mathbb{R}}, \mathbb{R}])=v_{[g, \coplotdom{g}]}([{\id}_{\mathbb{R}}\circ h, h^{-1}(\mathbb{R})])=v_{[g, \coplotdom{g}]}([h, \coplotdom{h}]).
    \end{split}
  \end{equation*}
  Therefore, we have proved that $\rightfunctor(g)(\beta(v))=v_{[g,\coplotdom{g}]}$ and this means that $\alpha\circ\beta(v)=v$. 

  Consequently, we have the isomorphism $\Ran_{\inclloc} T^{\enclloc} (X,x)\cong \righttangent{x}{X}$.
\end{proof}

Thanks to this theorem, we can calculate the right tangent space as the image of the right Kan extension of the standard tangent functor $T^{\enclloc}\colon\enclloc\to\vect$ 
along the inclusion functor $\inclloc\colon\enclloc\to\dflg$. Since $\enclloc$ and $\enclzeroloc$ are category equivalent, we can also calculate the right tangent functor as 
the right Kan extension of $T^{\enclzeroloc}\colon\enclzeroloc\to\vect$ along the inclusion functor $\inclzeroloc\colon\enclzeroloc\to\dflg$. 

\begin{cor} \label{corollary calculation of right tangent}
  Let $X$ be a diffeological space, and let $x\in X$. Let $f\colon X\to \coplotim{f}$ be a smooth map which satisfies the following conditions:
  \begin{enumerate}
    \item For any smooth map $g\colon \coplotdom{g}\to\coplotim{g}$ where $\coplotdom{g}$ is a $D$-open set,
          there exists at least one morphism $h_g\colon [f,X]\to [g,\coplotdom{g}]$ in $\coplotcat$. 
    \item If $h_g, h_g'\colon[f,X]\to [g,\coplotdom{g}]$ are two morphisms that satisfy the condition of (1), then $(dh_g)_{f(x)}=(dh_g')_{f(x)}$.
  \end{enumerate}
  Then there is an isomorphism of vector spaces:
  \[
    \righttangent{x}{X}\cong T_{f(x)} \coplotim{f}.
  \]
\end{cor}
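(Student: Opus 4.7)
The plan is to combine \thref{theorem right tangent space is right Kan extension} with the hypotheses on $f$ to identify $[f, X]$ as a weakly initial object of $\coplotcat$ whose tangent vector at $f(x)$ determines the entire compatible family. By that theorem, it suffices to exhibit an isomorphism $\Ran_{\inclloc} T^{\enclloc}(X,x) \cong T_{f(x)} \coplotim{f}$.

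Recall that an element of the right Kan extension is a compatible family $\{v_G\}_{G \in \obj(\coplotcat)}$ with $v_G \in T_{G(x)} \coplotim{G}$ satisfying $T^{\enclloc}(s) v_G = v_H$ for every morphism $s\colon G \to H$ in $\coplotcat$. The natural forward map $\Phi\colon \Ran_{\inclloc} T^{\enclloc}(X,x) \to T_{f(x)} \coplotim{f}$ is simply evaluation at $[f, X]$, that is, $\Phi(\{v_G\}) = v_{[f,X]}$; linearity is immediate.

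For the inverse map $\Psi$: given $v \in T_{f(x)} \coplotim{f}$ and any $G = [g, \coplotdom{g}] \in \obj(\coplotcat)$, condition (1) supplies at least one morphism $h_G\colon [f, X] \to G$ in $\coplotcat$, and condition (2) ensures that the differential $(dh_G)_{f(x)}$ is independent of the chosen representative. I would therefore set $\Psi(v)_G := (dh_G)_{f(x)} v$. To verify that $\Psi(v)$ is a compatible family, fix any morphism $s\colon G \to H$ in $\coplotcat$; then $s \circ h_G$ is itself a valid choice of morphism $[f,X] \to H$, so by condition (2) together with the chain rule
\[
T^{\enclloc}(s)\,\Psi(v)_G = (ds)_{G(x)}(dh_G)_{f(x)} v = (d(s \circ h_G))_{f(x)} v = (dh_H)_{f(x)} v = \Psi(v)_H.
\]

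Finally, $\Phi \circ \Psi = \id$ follows by taking $h_{[f,X]} = \id_{\coplotim{f}}$, while $\Psi \circ \Phi = \id$ follows by applying the compatibility relation of a given family to the morphisms $h_G$: $\Psi(\Phi(\{v_G\}))_G = (dh_G)_{f(x)} v_{[f,X]} = v_G$. The main subtleties are the well-definedness of $\Psi$ and the verification of compatibility, and both dissolve immediately once conditions (1) and (2) are combined with functoriality of the differential; neither step requires any delicate analysis beyond the chain rule.
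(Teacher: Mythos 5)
Your proposal is correct and follows exactly the route the paper intends: the paper's own proof of this corollary is just the one-line remark that it is straightforward from \thref{theorem right tangent space is right Kan extension}, and your argument supplies precisely the details of that deduction, using conditions (1) and (2) to show that evaluation of a compatible family at $[f,X]$ is an isomorphism onto $T_{f(x)}\coplotim{f}$. The verifications (well-definedness of $\Psi$ via (2), compatibility via the chain rule, and the two composite identities, including the observation that any endomorphism $h_{[f,X]}$ must have the identity differential) are all sound.
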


\begin{proof}
This is straightforward from \thref{theorem right tangent space is right Kan extension}.
\end{proof}
\begin{cor}[\CW, Proposition 3.12] \label{corollary subspace external tangent}
  Let $X$ be a diffeological space, and let $x\in X$. Let $A$ be a $D$-open neighborhood of $x$ in $X$. Equip $A$ with a subdiffeology of $X$.
  Then the natural inclusion map induces an isomorphism $\external{x}{A}\cong \external{x}{X}$ and $\righttangent{x}{A}\cong \righttangent{x}{X}$.
\end{cor}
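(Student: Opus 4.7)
The plan is to mirror the argument used for Corollary~3.10 on internal tangent spaces, exploiting the fact that $(A,x)$ and $(X,x)$ are canonically isomorphic as objects of the category $\dflgloc$. Indeed, the smooth inclusion $\iota\colon A\hookrightarrow X$ has a two-sided inverse in $\dflgloc$ given by ${\id}_A$ regarded as a local map $(X,x)\to(A,x)$ with $D$-open domain $A\subset X$; both composites agree with the identity on the $D$-open neighborhood $A$ of $x$, so they represent identity germs.

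For the right tangent space, \thref{theorem right tangent space is right Kan extension} identifies $\righttangent{x}{-}$ with $\Ran_{\inclloc} T^{\enclloc}(-,x)$, which is manifestly a functor on $\dflgloc$. Applying this functor to the isomorphism $[\iota]$ yields the desired isomorphism $\righttangent{x}{A}\cong\righttangent{x}{X}$, and a routine naturality check confirms it coincides with the map induced by the inclusion.

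For the external tangent space, I would first verify that $\externalfunctor$ descends to a functor on $\dflgloc$. The defining formula $\externalfunctor(f)(D)(G)=D([g\circ f,f^{-1}(\coplotdom{g})])$ depends on $f$ only through its germ at $x$, and the smoothness condition on $D$ is itself a germ-level property. The same $\dflgloc$-isomorphism $[\iota]$ then produces $\external{x}{A}\cong\external{x}{X}$.

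The main obstacle lies in the external step: one must know that $\germ(A,x)$ and $\germ(X,x)$ are isomorphic not merely as $\mathbb{R}$-algebras but as diffeological $\mathbb{R}$-algebras, so that the smoothness condition on a derivation transports across the identification. The bijection of underlying algebras is immediate from the definition of germs, and matching the diffeologies reduces to showing that for every $D$-open neighborhood $B\subset A$ of $x$ the functional diffeology on $\smoothmap{B}{\mathbb{R}}$ is the same whether $B$ is regarded as a subspace of $A$ or of $X$. This is a transitivity property of the subdiffeology for nested $D$-open inclusions, routine but the one place that requires some care.
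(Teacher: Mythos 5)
Your proposal is correct and follows essentially the same route as the paper, whose entire proof is the one-line observation that $(A,x)$ and $(X,x)$ are isomorphic as objects of $\dflgloc$ (the paper having already noted that $\rightfunctor$ and $\externalfunctor$ may be regarded as functors on $\dflgloc$). The extra checks you flag — that the smoothness condition on a derivation is a germ-level property and that the diffeological algebras $\germ(A,x)$ and $\germ(X,x)$ agree — are exactly the routine details the paper leaves implicit.
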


\begin{proof}
    $(A,x)$ and $(X,x)$ are isomorphic as objects of the category $\dflgloc$. 
\end{proof}

\begin{ex} \label{example cross right tangent}
  There is an isomorphism of vector spaces, where $\crossquot$ is defined in \exref{example definition cross_quot generating family} and 
  $\crosssub$ is defined in \exref{example crosssub internal tangent}:
  \[
    \righttangent{x}{\crossquot}\cong\righttangent{x}{\crosssub}\cong
    \begin{cases}
      \mathbb{R}^2 & (x=(0,0))\\
      \mathbb{R} & (x\neq (0,0)).
    \end{cases}
  \]
\end{ex}

\begin{proof}
The inclusion map $\crossquot\to\mathbb{R}^2;(a,b)\mapsto (a,b)$ and $\crosssub\to\mathbb{R}^2;(a,b)\mapsto (a,b)$ satisfy the conditions of \corref{corollary calculation of right tangent}.
\end{proof}

\subsection{Natural transformations from internal tangent spaces}\label{subsection natural transformation}
Thanks to the universality of the left Kan extension, there exists a natural transformation from the internal tangent functor to the right tangent functor or the external tangent functor. 
\begin{defi} \label{definition natural transformation}
  Let $X$ be a diffeological space, and let $x\in X$. We define a linear map $\nattrans_{(X,x)}\colon\internal{x}{X}\to\righttangent{x}{X}$ as follows:
  \[
    \nattrans_{(X,x)}([p,v])=\{T(g\circ p|_{p^{-1}(\coplotdom{g})})v\}_{[g,\coplotdom{g}]\in\obj(\coplotcat)}
  \]
  where $[p,v]\in\internal{x}{X}$, $p\colon \plotdom{p}\to X$ is a plot and $v\in T_0 (\plotdom{p})$. 
  This defines the natural transformation $\nattrans=\{\nattrans_{(X,x)}\}_{(X,x)\in\obj(\dflgloc)}\colon\internalfunctor\Rightarrow\rightfunctor$.
\end{defi}

Next, we prove that the image of $\nattrans_{(X,x)}$ is contained in $\external{x}{X}$.

\begin{lemma} \label{lemma differential operator is smooth}
  Let $W$ be an open set of $\mathbb{R}^n$. The map 
  \[
    \del{i}\colon C^{\infty}(W,\mathbb{R})\to \mathbb{R};f\mapsto \frac{\partial}{\partial x^i}f
  \]
  is smooth. In particular, for each differential operator $v$ of $W$, the map 
  \[
    v\colon C^{\infty}(W,\mathbb{R})\to \mathbb{R};f\mapsto v(f)
  \]
  is also smooth.
\end{lemma}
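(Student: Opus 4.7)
The plan is to unwind the definition of the functional diffeology on $C^{\infty}(W,\mathbb{R})$ and reduce smoothness of $\del{i}$ to the classical fact that partial derivatives of smooth maps between Euclidean open sets are smooth. Concretely, I need to show that for every plot $p\colon \plotdom{p}\to C^{\infty}(W,\mathbb{R})$, the composite $\del{i}\circ p\colon \plotdom{p}\to \mathbb{R}$ is a smooth map between Euclidean open sets.

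First, by \defref{definition functional diffeology}, $p$ is a plot of $C^{\infty}(W,\mathbb{R})$ precisely when the adjoint map $\ad(p)\colon \plotdom{p}\times W\to \mathbb{R};(u,x)\mapsto p(u)(x)$ is smooth in the ordinary sense. Next, since $\ad(p)$ is a smooth function on the Euclidean open set $\plotdom{p}\times W$, the partial derivative $(u,x)\mapsto \frac{\partial}{\partial x^i}\ad(p)(u,x)$ is again smooth on $\plotdom{p}\times W$. Evaluating this smooth function at the distinguished base point of $W$ (along the fibre in $u$) yields a smooth map $\plotdom{p}\to \mathbb{R}$, and by construction this is exactly $\del{i}\circ p$. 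Hence $\del{i}$ sends plots to plots, i.e. it is smooth.

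For the second statement, any differential operator $v$ on $W$ has the form $v=\sum_{\alpha} c_{\alpha}\,\del{\alpha}$ for a finite collection of multi-indices $\alpha$ and smooth coefficients $c_{\alpha}$, where each $\del{\alpha}$ is the iterate $\del{i_1}\cdots\del{i_k}$. Applying the previous paragraph inductively, each iterated partial derivative of $\ad(p)$ remains smooth on $\plotdom{p}\times W$; multiplying by the smooth coefficients $c_{\alpha}$ and summing preserves smoothness; finally, evaluating at the chosen base point of $W$ produces a smooth map $\plotdom{p}\to \mathbb{R}$. Thus $v\colon C^{\infty}(W,\mathbb{R})\to \mathbb{R}$ also sends plots to plots and is therefore smooth.

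The only real step is the first one, and even there no genuine obstacle appears: the entire content is that smoothness of $\ad(p)$ in the joint variables $(u,x)$ classically implies smoothness of its partial derivatives in $x$, which is then evaluated pointwise in $u$. The role of \defref{definition functional diffeology} is precisely to translate this elementary calculus fact into the statement that $\del{i}$ is smooth as a morphism of diffeological spaces.
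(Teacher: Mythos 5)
Your proof is correct and takes essentially the same route as the paper's: both unwind the functional diffeology via $\ad(p)$, observe that the partial derivative of the smooth map $\ad(p)$ in the $W$-directions is again smooth, and then evaluate to conclude that $\del{i}$ sends plots to plots, with the second assertion reduced to the first by writing $v$ as a (finite) combination of the $\del{i}$'s. The only cosmetic difference is that the paper records the identity $\ad(\del{i}\circ p)=\del{m+i}(\ad(p))$ explicitly before evaluating, whereas you evaluate at the base point directly.
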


\begin{proof}
Let $p\colon \plotdom{p}\to C^{\infty}(W,\mathbb{R})$ be a plot, and let $m$ be the dimension of $\plotdom{p}$. 
Because of the definition of the functional diffeology on $C^{\infty}(W,\mathbb{R})$, the map $\ad(p)\colon\plotdom{p}\times W\to\mathbb{R}$ is smooth (\defref{definition functional diffeology}).
For each $u\in \plotdom{p}$, $\del{i}\circ p(u)$ is the $(m+i)$-th partial derivative of $\ad(p)$. 
Therefore, for any $u\in\plotdom{p}$ and $x\in W$, the following equality holds:
\[
  \ad\left(\del{i}\circ p\right)(u,x)=\del{m+i}(\ad(p))(u,x).
\]
Since $\ad(p)$ is smooth, the $(m+i)$-th partial derivative of $\ad(p)$ is also smooth. 
In other words, $\del{m+i}(\ad(p))$ is smooth, so $\ad(\del{i}\circ p)$ is also smooth because of the above equality. 
By the definition of the functional diffeology again, $\del{i}\circ p$ is smooth and this proves the first assertion of this lemma. 

It is straightforward to prove the second assertion by using the first assertion because every differential operator $v$ of $W$ is a linear combination of $\del{i}$'s.
\end{proof}

\begin{prop} \label{proposition image of internal is external}
  Let $X$ be a diffeological space, and let $x\in X$. Then $\nattrans_{(X,x)}(\internal{x}{X})$ is contained in $\external{x}{X}$.
  Therefore, we have the natural transformation $\nattransexternal\colon\internalfunctor\Rightarrow\externalfunctor$.
\end{prop}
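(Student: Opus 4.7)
The plan is to combine \thref{theorem right tangent space is right Kan extension} with \lemref{lemma differential operator is smooth}. First I would translate $\nattrans_{(X,x)}$ into the direct description of $\righttangent{x}{X}$ from \defref{definition external}: by chasing the isomorphism $\beta$ constructed in the proof of \thref{theorem right tangent space is right Kan extension}, the right tangent vector $D_{[p,v]} \in \righttangent{x}{X}$ associated to a generator $[p,v] \in \internal{x}{X}$ (with $p\colon \plotdom{p}\to X$ a plot centered at $x$ and $v\in T_0 \plotdom{p}$) is simply
\[
  D_{[p,v]}([g,\coplotdom{g}]) \;=\; v\bigl(g\circ p|_{p^{-1}(\coplotdom{g})}\bigr),
\]
where $v$ is viewed as a differential operator at $0\in\plotdom{p}$. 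Since \proref{proposition 1-dimension plots generate internal tangent} shows that such elements generate $\internal{x}{X}$, and $\external{x}{X}$ is a linear subspace of $\righttangent{x}{X}$, it suffices to prove that each $D_{[p,v]}$ is smooth as a map $\germ(X,x)\to\mathbb{R}$.

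Next I would unwind the diffeology on $\germ(X,x)$ following the strategy used in the proof of \proref{proposition external tangent forms functor}. By the universality of the quotient diffeology (\proref{proposition germ is diffeological algebra}), smoothness of $D_{[p,v]}$ is equivalent to smoothness of its lift $\locmap((X,x),\mathbb{R})\to\mathbb{R}$; by the universality of the sum diffeology, this in turn reduces to showing that for every $D$-open neighborhood $B$ of $x$, the map
\[
  \smoothmap{B}{\mathbb{R}}\;\longrightarrow\;\mathbb{R},\qquad g\;\longmapsto\; v\bigl(g\circ p|_{p^{-1}(B)}\bigr)
\]
is smooth. Setting $U'=p^{-1}(B)\subseteq \plotdom{p}$, which is an Euclidean open neighborhood of $0$, precomposition with $p|_{U'}\colon U'\to B$ defines a map $\precomp{p|_{U'}}\colon \smoothmap{B}{\mathbb{R}}\to \smoothmap{U'}{\mathbb{R}};\, g\mapsto g\circ p|_{U'}$. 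Standard adjunction arguments for the functional diffeology (applied exactly as in the proof of \proref{proposition external tangent forms functor}) show this precomposition map is smooth: for any plot $\tilde q\colon W \to\smoothmap{B}{\mathbb{R}}$, the adjoint $\ad(\tilde q)\colon W\times B\to \mathbb{R}$ is smooth by \defref{definition functional diffeology}, and composing with the smooth map $\id_W\times p|_{U'}\colon W\times U'\to W\times B$ yields the adjoint of $\precomp{p|_{U'}}\circ \tilde q$.

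Finally, the map above factors as $\smoothmap{B}{\mathbb{R}}\xrightarrow{\precomp{p|_{U'}}} \smoothmap{U'}{\mathbb{R}}\xrightarrow{v} \mathbb{R}$, where the second arrow is smooth by \lemref{lemma differential operator is smooth} applied to the differential operator $v$ on the Euclidean open set $U'$. Hence the composition is smooth, so $D_{[p,v]}\in \external{x}{X}$, which proves the proposition; naturality of $\nattransexternal=\{\nattrans_{(X,x)}|_{\internal{x}{X}}\}$ follows immediately from the naturality of $\nattrans$. The only mild subtlety I foresee is bookkeeping: carefully tracking the identifications $T_{g(x)}\mathbb{R}\cong \righttangent{g(x)}{\mathbb{R}}\cong\mathbb{R}$ in the explicit formula for $D_{[p,v]}$, and verifying that the factorization through $\smoothmap{U'}{\mathbb{R}}$ genuinely puts us in the setting of \lemref{lemma differential operator is smooth}; no real obstacle beyond diagram-chasing is expected.
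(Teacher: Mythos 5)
Your proposal is correct and follows essentially the same route as the paper: reduce smoothness of $\nattrans_{(X,x)}([p,v])$ via the universality of the quotient and sum diffeologies to smoothness of each $D_B$, factor $D_B$ through precomposition with $p|_{p^{-1}(B)}$, and apply \lemref{lemma differential operator is smooth} to the second factor. The only difference is that you spell out the adjunction argument for smoothness of the precomposition map, which the paper leaves implicit.
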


\begin{proof}
Let $[p,v]\in\internal{x}{X}$, where $p\colon \plotdom{p}\to X$ is a plot and $v\in T_0 (\plotdom{p})$. 
We define $D=\nattrans_{(X,x)}([p,v])$. By regarding $D\in\righttangent{x}{X}$ as a linear map $\germ(X,x)\to\mathbb{R}$ which satisfies the Leibniz rule, 
the next equality holds:
\[
  D([g,\coplotdom{g}])=(T(g\circ p|_{p^{-1}(\coplotdom{g})})v)([{\id}_{\mathbb{R}},\mathbb{R}])=v([g\circ p|_{p^{-1}(\coplotdom{g})},p^{-1}(\coplotdom{g})]),
\]
where $v$ is regarded as a right tangent vector of $\righttangent{0}{\plotdom{p}}$. 
We should prove that $D\in\righttangent{x}{X}$ is an element of $\external{x}{X}$, in other words, $D$ is smooth as a map $\germ(X,x)\to\mathbb{R}$.
By the universality of the sum diffeology and quotient diffeology, 
it is enough to show that for any $D$-open neighborhood $B$ of $x$, the map $D_B\colon\smoothmap{B}{\mathbb{R}}\to \mathbb{R};g\mapsto D([g,B])$ is a smooth map. 
For $g\in C^{\infty}(B,\mathbb{R})$, we have
\[
  D_B(g)=D([g,B])=v([g\circ p|_{p^{-1}(B)},p^{-1}(B)]).
\]
Therefore, $D_B$ is a composition of two maps $C^{\infty}(B,\mathbb{R})\to C^{\infty}(p^{-1}(B),\mathbb{R});g\mapsto g\circ p|_{p^{-1}(B)}$ 
and $C^{\infty}(p^{-1}(B),\mathbb{R})\to\mathbb{R};h\mapsto v([h,\coplotdom{h}])$. 
Since the second map is smooth because of \lemref{lemma differential operator is smooth}, $D_B$ is smooth.
\end{proof}

\begin{remark}{}{}
  An element of the image of $\nattrans_{(X,x)}$ is contained in the image of $\rightfunctor(f)\colon\righttangent{0}{\mathbb{R}}\to\righttangent{x}{X}$, 
  where $f\colon \mathbb{R}\to X$ is some smooth map (because $\righttangent{0}{\mathbb{R}}\cong \internal{0}{\mathbb{R}}$).
  Therefore, this proposition is immediately proved by \proref{proposition external tangent forms functor}.

  We can also prove this proposition by the universality of the left Kan extension.
  First, the universality of the left Kan extension, it follows that there exists the unique natural transformation $\internalfunctor\Rightarrow\externalfunctor$. 
  If we composite this natural transformation with the inclusion natural transformation $\externalfunctor\Rightarrow\rightfunctor$, we have a natural transformation $\internalfunctor\Rightarrow\rightfunctor$. 
  However, by the universality of the left Kan extension again, this natural transformation coincides with the natural transformation $\nattrans\colon\internalfunctor\Rightarrow\rightfunctor$.
  This means that the image of the natural transformation $\nattrans$ is contained in $\externalfunctor$. 
\end{remark}

\begin{cor} \label{corollary right equal to external}
  Let $X$ be a diffeological space, and let $x\in X$. If the natural transformation map $\nattrans_{(X,x)}$ is surjective, then $\righttangent{x}{X}=\external{x}{X}$.
\end{cor}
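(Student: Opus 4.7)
The plan is to chain the hypothesis with Proposition \ref{proposition image of internal is external} and with the definitional inclusion $\external{x}{X}\subseteq\righttangent{x}{X}$. By \defref{definition external}, the external tangent space is defined as the subspace of $\righttangent{x}{X}$ consisting of those right tangent vectors that are additionally smooth, so one inclusion is immediate and requires no argument.

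For the reverse inclusion, I would invoke Proposition \ref{proposition image of internal is external}, which asserts that the image $\nattrans_{(X,x)}(\internal{x}{X})$ sits inside $\external{x}{X}$. Under the surjectivity hypothesis, this image equals all of $\righttangent{x}{X}$, so we obtain the chain
\[
  \righttangent{x}{X} \;=\; \nattrans_{(X,x)}(\internal{x}{X}) \;\subseteq\; \external{x}{X} \;\subseteq\; \righttangent{x}{X},
\]
which forces equality. That is the entire argument; there is essentially no obstacle, as the corollary is a formal consequence of the preceding proposition. The only thing worth double-checking is that the statement $\righttangent{x}{X}=\external{x}{X}$ is to be read as an equality of subsets (equivalently, of subspaces) of the ambient vector space $\righttangent{x}{X}$, rather than merely an abstract isomorphism; this is the natural reading, and the chain of inclusions above delivers exactly that.
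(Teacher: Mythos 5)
Your argument is exactly the paper's: both deduce from Proposition \ref{proposition image of internal is external} that the image of $\nattrans_{(X,x)}$ lies in $\external{x}{X}$, and then use surjectivity to conclude $\righttangent{x}{X}\subseteq\external{x}{X}\subseteq\righttangent{x}{X}$. Your write-up is just a slightly more explicit version of the one-line proof in the paper, and it is correct.
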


\begin{proof}
Since all elements in the image of $\nattrans_{(X,x)}$ are external tangent vectors (\proref{proposition image of internal is external}), we obtain this corollary.
\end{proof}

\begin{remark}{(Vincent's tangent space)}\label{remark Vincent's tangent space}
  We also consider the image of this natural transformation as one of the types of tangent spaces of diffeological spaces.
  We denote this tangent space by $\vincenttangent{x}{X}$. 
  This tangent space is also a generalization of a tangent space of manifolds. 
  Very close definition to this is considered in \vincent. However, the definition of Vincent is considering differential operators only on global smooth functions rather than germs of them.
  We discuss the difference between them in the next subsection. 
\end{remark}

\subsection{Variants of external tangent spaces} \label{subsection variants of external}

\begin{defi} \label{definition other external tangent space}
  Let $X$ be a diffeological space, and let $x\in X$. We equip $C^{\infty}(X,\mathbb{R})$ with the functional diffeology and the standard $\mathbb{R}$-algebra structure. 
  A map $D\colon C^{\infty}(X,\mathbb{R})\to\mathbb{R}$ is called a \italic{global right tangent vector} if the following two conditions hold:
  \begin{description}
    \item[Linearity] $D$ is a linear map.
    \item[Leibniz rule] For all $g$, $h\in \smoothmap{X}{\mathbb{R}}$, $D(gh)=g(x)D(h)+h(x)D(g)$.
  \end{description}
  We denote the set of all global right tangent vectors by $\globalrighttangent{x}{X}$ and naturally we give the natural vector space structure to $\globalrighttangent{x}{X}$.
  This vector space is called the \italic{global right tangent space of $X$ at $x$}.
  Moreover, we call a global right tangent vector $D\in\globalrighttangent{x}{X}$ a \italic{global external tangent vector} if the following condition also holds:
  \begin{description}
    \item[Smoothness] $D$ is a smooth map.
  \end{description}
  The set $\globalexternal{x}{X}$ of all global external tangent vectors define the subspace of $\globalrighttangent{x}{X}$, and it is called the \italic{global external tangent space of $X$ at $x$}. 
\end{defi}

\begin{prop} \label{proposition global right tangent is right Kan extension}
  Let $X$ be a diffeological space, and let $x\in X$. Then there is an isomorphism of vector spaces:
  \[
    \Ran_{\inclbased} T^{\enclbased} (X,x)\cong \globalrighttangent{x}{X}.
  \]
\end{prop}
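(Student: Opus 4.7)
The plan is to mimic the proof of Theorem~\ref{theorem right tangent space is right Kan extension} almost verbatim, replacing $\enclloc$ by $\enclbased$ and germs of smooth maps by global smooth maps. The only extra input needed is the standard fact that for a based Euclidean open set $(U,u)$ the global right tangent space $\globalrighttangent{u}{U}$ is canonically isomorphic to the classical tangent space $T_u U$: every derivation $C^\infty(U,\mathbb{R})\to\mathbb{R}$ at $u$ is localized by a bump function, hence determined by a tangent vector. Via this identification, I will view every $v_g$ below simultaneously as an element of $T_{g(x)}(U_g)$ and as a derivation on $C^\infty(U_g,\mathbb{R})$ at $g(x)$.

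First, unpack the right Kan extension: an element of $\Ran_{\inclbased} T^{\enclbased}(X,x)$ is a compatible family $\{v_g\}_{g\in\obj((X,x)\downarrow\inclbased)}$ with $v_g\in T_{g(x)}(U_g)$, where $g\colon (X,x)\to (U_g,g(x))$ is a based smooth map, subject to $T^{\enclbased}(s)v_g=v_{s\circ g}$ for every based smooth map $s$ in $\enclbased$. I then define
\[
  \alpha\colon \globalrighttangent{x}{X}\to \Ran_{\inclbased} T^{\enclbased}(X,x),\quad \alpha(D)_g(f)=D(f\circ g)\quad (f\in C^\infty(U_g,\mathbb{R})),
\]
and
\[
  \beta\colon \Ran_{\inclbased} T^{\enclbased}(X,x)\to \globalrighttangent{x}{X},\quad \beta(v)(g)=v_g(\id_{\mathbb{R}})\quad (g\in C^\infty(X,\mathbb{R})),
\]
where on the right we regard $g$ as the object of $(X,x)\downarrow\inclbased$ with target $(\mathbb{R},g(x))$. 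Compatibility of $\alpha(D)$ is immediate from associativity of composition, and $\alpha(D)_g$ is a derivation at $g(x)$ because $D$ is.

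The substantive verification is that $\beta(v)$ is linear and satisfies the Leibniz rule. For $g,h\in C^\infty(X,\mathbb{R})$, form $(g,h)\colon X\to\mathbb{R}^2$, which is an object of $(X,x)\downarrow\inclbased$ with target $(\mathbb{R}^2,(g(x),h(x)))$. The based smooth maps $\plus,\kakeru\colon\mathbb{R}^2\to\mathbb{R}$ and $\prl,\prr\colon\mathbb{R}^2\to\mathbb{R}$ all live in $\enclbased$ when given the appropriate base points, so compatibility yields $v_{g+h}=T(\plus)v_{(g,h)}$, $v_{gh}=T(\kakeru)v_{(g,h)}$, $v_g=T(\prl)v_{(g,h)}$, $v_h=T(\prr)v_{(g,h)}$. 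The same chain of equalities as in the proof of Theorem~\ref{theorem right tangent space is right Kan extension} then produces linearity and the Leibniz rule for $\beta(v)$ at $x$; scalar multiplication is handled analogously using the based map $t\cdot\colon(\mathbb{R},g(x))\to(\mathbb{R},tg(x))$, and the key step at each stage is that $v_{(g,h)}$ is itself a derivation on $C^\infty(\mathbb{R}^2,\mathbb{R})$ at $(g(x),h(x))$.

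Finally, $\alpha\circ\beta=\id$ and $\beta\circ\alpha=\id$ follow from direct substitution exactly as in Theorem~\ref{theorem right tangent space is right Kan extension}. There is no genuine obstacle: the proof is a faithful adaptation of the earlier one. The only real thing to be careful about is that in $\enclbased$ the base points vary across objects of the comma category (they are $g(x)$ rather than uniformly the origin), so each application of compatibility must use a morphism in $\enclbased$ based at the correct points; restricting to $\enclzero$ in place of $\enclbased$ would be legitimate here as well since these inclusions are categorically equivalent.
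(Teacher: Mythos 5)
Your proposal is correct and matches the paper's approach exactly: the paper's own proof of this proposition is the single sentence ``This is the same as the proof of \thref{theorem right tangent space is right Kan extension},'' and your write-up is precisely that adaptation, with the germ category $\enclloc$ replaced by $\enclbased$ and germs of functions replaced by global smooth functions. The one point you flag --- that base points vary across objects of the comma category --- is already present in the original argument over $\enclloc$ and is handled the same way, so there is nothing further to check.
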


\begin{proof}
This is the same as the proof of \thref{theorem right tangent space is right Kan extension}. 
\end{proof}

Because of this proposition, we sometimes regard $\globalrighttangent{x}{X}$ as $\Ran_{\inclbased} T^{\enclbased} (X,x)$ and $\globalexternal{x}{X}$ as a subspace of $\Ran_{\inclbased} T^{\enclbased} (X,x)$.

\begin{prop} \label{proposition global right is differential invariant}
  Let $X$ and $Y$ be two diffeological spaces, and let $f\colon X\to Y$ be a smooth map. If $f\colon \Phi X\to\Phi Y$ is a diffeomorphism as a map between differential spaces (appendix \ref{section differential and frolicher}), 
  then there is an isomorphism of vector spaces: 
  \[
    \globalrighttangent{x}{X}\cong\globalrighttangent{f(x)}{Y}.
  \]
\end{prop}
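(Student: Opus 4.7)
The plan is to note that the global right tangent space at $x$ depends on $X$ only through the pair $(C^\infty(X,\mathbb{R}), \text{ev}_x)$, namely the $\mathbb{R}$-algebra of global smooth real-valued functions together with evaluation at $x$. The hypothesis that $f\colon \Phi X\to\Phi Y$ is a diffeomorphism of differential spaces means precisely that $f$ is a bijection and that the pullback $f^{*}\colon C^{\infty}(Y,\mathbb{R})\to C^{\infty}(X,\mathbb{R});\,g\mapsto g\circ f$ is an $\mathbb{R}$-algebra isomorphism, since for each diffeological space the underlying differential space carries the algebra of global smooth real-valued maps as its structure. Moreover, $f(x)$ corresponds under $f^{*}$ to $x$ in the sense that $(f^{*}g)(x)=g(f(x))$ for every $g\in C^{\infty}(Y,\mathbb{R})$.

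Given this, I would define the candidate map
\[
  \alpha\colon \globalrighttangent{f(x)}{Y}\to\globalrighttangent{x}{X};\quad D\mapsto D\circ f^{*}.
\]
The first step is to verify that $\alpha(D)$ really belongs to $\globalrighttangent{x}{X}$. Linearity is immediate because both $D$ and $f^{*}$ are linear. For the Leibniz rule, I would compute, for $g,h\in C^{\infty}(X,\mathbb{R})$ identified via $f^{*}$ with corresponding elements of $C^{\infty}(Y,\mathbb{R})$,
\[
  (D\circ f^{*})(gh)=D(f^{*}g\cdot f^{*}h)=(f^{*}g)(x)\,D(f^{*}h)+(f^{*}h)(x)\,D(f^{*}g),
\]
using that $f^{*}$ is multiplicative and that $D$ satisfies the Leibniz rule at $f(x)$; then the identity $(f^{*}g)(x)=g(f(x))$ (and similarly for $h$) rewrites the right-hand side in the form required at $x$.

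The second step is to exhibit an inverse. Since $f$ is a diffeomorphism of differential spaces, $f^{-1}$ also pulls back smooth functions, and $(f^{-1})^{*}$ is the two-sided inverse of $f^{*}$ as an $\mathbb{R}$-algebra isomorphism. The symmetric construction $\beta\colon D'\mapsto D'\circ(f^{-1})^{*}$ then lands in $\globalrighttangent{f(x)}{Y}$ by the same computation, and $\alpha\circ\beta$ and $\beta\circ\alpha$ are identities because $f^{*}\circ(f^{-1})^{*}$ and $(f^{-1})^{*}\circ f^{*}$ are identities on the respective algebras. Linearity of $\alpha$ in $D$ is clear, so $\alpha$ is a linear isomorphism.

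The only genuinely subtle point, and the step I would be most careful about, is the identification in Step 1: one must confirm from the definition of $\Phi$ used in the appendix that the smooth maps into $\mathbb{R}$ for the differential space $\Phi X$ coincide with $C^{\infty}(X,\mathbb{R})$ for the diffeological space $X$, so that a differential-space diffeomorphism really does produce an algebra isomorphism of exactly the right algebras. Once this identification is in hand, the rest of the argument is purely formal and uses nothing beyond the definition of $\globalrighttangent{x}{X}$; no germs, no smoothness on $D$, and no appeal to the diffeology of $C^{\infty}(X,\mathbb{R})$ are required.
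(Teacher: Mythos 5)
Your proposal is correct and follows essentially the same route as the paper: the paper's (very terse) proof likewise rests on the identification $C^{\infty}(X,\mathbb{R})=\Phi\mathscr{D}_X$, so that a differential-space diffeomorphism yields an algebra isomorphism under pullback, and the global right tangent space depends only on this algebra together with evaluation at the base point. Your version simply spells out the details (the Leibniz computation, the inverse via $(f^{-1})^{*}$, and the correct observation that no smoothness of $D$ is needed, which is exactly why the analogous statement for the global \emph{external} tangent space remains open in the paper).
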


\begin{proof}
For any diffeological space $(X,\mathscr{D}_X)$, there is a bijection between $C^{\infty}(X,\mathbb{R})$ and $\Phi\mathscr{D}_X$. 
Since this bijection is a linear isomorphism, 
we are done.
\end{proof}
  This proposition means that global right tangent space is a differential invariant. 
  However, we do not know whether the global external tangent space is a differential invariant.

We also consider the natural transformation from internal tangent spaces to global right tangent spaces (or global external tangent spaces). 

\begin{defi} \label{definition natural transformation to global tangent space}
  Let $X$ be a diffeological space, and let $x\in X$. We define a linear map $\nattransglobal_{(X,x)}\colon\internal{x}{X}\to\globalrighttangent{x}{X}$ as follows:
  \[
    \nattransglobal_{(X,x)}([p,v])=\{T(g\circ p)v\}_{g\in \smoothmap{X}{\mathbb{R}}}
  \]
  where $[p,v]\in\internal{x}{X}$, $p\colon \plotdom{p}\to X$ is a plot and $v\in T_0 (\plotdom{p})$. 
  This defines the natural transformation $\nattransglobal=\{\nattransglobal_{(X,x)}\}_{(X,x)\in\obj(\dflgbased)}\colon\internalfunctor\Rightarrow\globalrightfunctor$,
  where we regard $\internalfunctor$ as a functor from $\dflgbased$.
  We also have the natural transformation $\nattransglobalexternal\colon\internalfunctor\Rightarrow\globalexternalfunctor$ in the same manner as \proref{proposition image of internal is external}.   
\end{defi}

We want to know when the right tangent space and the global right tangent space coincide. Here is one of sufficient conditions.
In what follows, for a diffeological space $X$, we call the smallest topology which makes all elements of $\smoothmap{X}{\mathbb{R}}$ continuous the \italic{$I$-topology} of $X$. 
This coincides with the $I$-topology of the differential space $\Phi X$, which is introduced in the appendix \ref{section differential and frolicher}. 
Note that the $I$-topology is contained in the $D$-topology of the pullback diffeology of $X$ by the family of smooth functions $C^\infty(X, \mathbb{R})$. 
The inclusion need not be an equality. For example, if $X=\mathbb{R}^{J}$ is equipped with the product diffeology and $|J|=2^{\aleph_0}$, then its $I$-topology is the product topology, whereas its $D$-topology is the strictly finer $c^\infty$-topology; see \cite[Theorem~3.7]{MR3270173} and \cite[Example~4.8 and the proof of Example~4.27]{MR1471480}.

\begin{defi} \label{definition smoothly regular}
  Let $X$ be a diffeological space, let $x\in X$, and let $\mathscr{O}$ be a topology on the set $X$. $X$ is said to be \italic{smoothly regular at $x$ for $\mathscr{O}$} 
  if for any open neighborhood $U\in\mathscr{O}$ of $x$, there exists some smooth map $\lambda\colon X\to \mathbb{R}$ such that 
  $\lambda(x)=1$ and $\lambda(y)=0$ for any $y\notin U$. 
  Also, $X$ is said to be \italic{smoothly regular for $\mathscr{O}$} if for any $x\in X$, $X$ is smoothly regular at $x$ for $\mathscr{O}$.
\end{defi}

If the topology $\mathscr{O}$ in this definition is the $I$-topology of $X$, 
the diffeological space $X$ automatically becomes smoothly regular. 
More strongly, the following proposition holds. 

\begin{prop} \label{proposition diffeological space is strong smoothly regular w.r.t. initial topology}
  Let $X$ be a diffeological space, let $x\in X$, and let $\mathscr{O}$ be the $I$-topology of $X$.
  Then for any open neighborhood $U\in\mathscr{O}$ of $x$, 
  there exists some open neighborhood $V\in\mathscr{O}$ of $x$ and some smooth map $\lambda\colon X\to \mathbb{R}$ such that 
  $\lambda(y)=1$ for any $y\in V$ and $\lambda(y)=0$ for any $y\notin U$. 
  In particular, $X$ is smoothly regular at $x$ for its $I$-topology.
\end{prop}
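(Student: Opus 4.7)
The plan is to use the fact that the $I$-topology on $X$ has an explicit subbasis of preimages of open sets under smooth real-valued functions, and then manufacture $\lambda$ as a finite product of compositions of smooth bumps on $\mathbb{R}$ with these functions.

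First I would unwind the $I$-topology: every open set is a union of finite intersections of the form
\[
  \bigcap_{i=1}^n f_i^{-1}(W_i),
\]
where each $f_i\in\smoothmap{X}{\mathbb{R}}$ and each $W_i$ is an open subset of $\mathbb{R}$. So given $U\in\mathscr{O}$ with $x\in U$, I may pick such a basic neighborhood $U_0\subset U$ with $x\in U_0$, which means in particular $f_i(x)\in W_i$ for every $i$.

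Next I would invoke standard smooth Urysohn-type bumps on $\mathbb{R}$: for each $i$ choose an open interval $V_i\ni f_i(x)$ with $\overline{V_i}\subset W_i$, and then a smooth function $\varphi_i\colon\mathbb{R}\to[0,1]$ with $\varphi_i\equiv 1$ on $V_i$ and $\mathrm{supp}(\varphi_i)\subset W_i$. Define
\[
  \lambda(y) \;=\; \prod_{i=1}^n \varphi_i\bigl(f_i(y)\bigr), \qquad V \;=\; \bigcap_{i=1}^n f_i^{-1}(V_i).
\]
Smoothness of $\lambda\colon X\to\mathbb{R}$ follows because each $\varphi_i\circ f_i$ is a composition of smooth maps (smoothness in $\dflg$ is closed under composition) and because the pointwise product of smooth real-valued maps is smooth (the product diffeology on $\mathbb{R}^n$ makes multiplication smooth). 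The neighborhood $V$ is $I$-open, and it clearly contains $x$ because $f_i(x)\in V_i$ for each $i$.

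The remaining verifications are routine: on $V$ every $f_i(y)$ lies in $V_i$, so each $\varphi_i(f_i(y))=1$ and $\lambda\equiv 1$; if $y\notin U$ then in particular $y\notin U_0$, so at least one $f_i(y)$ lies outside $W_i$, whence $\varphi_i(f_i(y))=0$ and hence $\lambda(y)=0$. The ``in particular'' statement then follows by taking $V$ as a witness neighborhood in \defref{definition smoothly regular}.

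I do not anticipate a real obstacle here; the only subtle point is that the $I$-topology admits the subbasis I claimed, but this is immediate from its definition as the topology generated by the family $\smoothmap{X}{\mathbb{R}}$. The rest is a transcription of the classical manifold argument into the diffeological setting, using only that smoothness in $\dflg$ is closed under composition and finite products with the Euclidean target.
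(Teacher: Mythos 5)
Your proof is correct and follows essentially the same route as the paper: both extract a basic $I$-open neighborhood $\bigcap_i f_i^{-1}(W_i)\subset U$ of $x$ and push a Euclidean bump function back through the $f_i$; the only cosmetic difference is that the paper uses a single bump $b$ on $\mathbb{R}^k$ composed with $(f_1,\dots,f_k)$ where you use a product of one-variable bumps, which amounts to the same thing.
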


\begin{proof}
  This proof is almost the same as \BKW, Proposition 6.3, but we give the proof. 
  Let $x\in X$, and let $\mathscr{D}_X$ be the diffeology of $X$. 
  Then by the definition of $I$-topology, there exist functions $h_1,\dots,h_k\in \Phi(\mathscr{D}_X)$ and open intervals $I_1,\dots, I_k$ such that 
  $x\in \bigcap_{i=1}^k h_i^{-1}(I_i) \subset U$. Take $\lambda=b\circ (h_1,\dots,h_k)$ where $b\colon\mathbb{R}^k\to[0,1]$ is a smooth bump function 
  whose support is contained in $I_1\times\dots\times I_k$ and such that $b(y)=1$ for $y\in W$, where $W$ is some open neighborhood of $(h_1(x),\dots,h_k(x))$ contained in $I_1\times,\dots,\times I_k$. 
  Take $V=(h_1,\dots,h_k)^{-1}(W)$. This is an $I$-open set and with these $\lambda$ and $V$, the condition of this proposition is satisfied. 
\end{proof}

\begin{lemma}[\BKW, Lemma 6.1] \label{lemma I-topology contained in D-topology}
  Let $X$ be a diffeological space. If $U$ is an $I$-open set of $X$, then $U$ is a $D$-open set of $X$. 
\end{lemma}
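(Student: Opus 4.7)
The plan is to show that every smooth function $h\colon X\to\mathbb{R}$ is continuous with respect to the $D$-topology on $X$, and then conclude using the characterization of $I$-open neighborhoods established in \proref{proposition diffeological space is strong smoothly regular w.r.t. initial topology}.

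First, I would prove that for any smooth map $h\colon X\to\mathbb{R}$ and any open set $V\subset\mathbb{R}$, the preimage $h^{-1}(V)$ is $D$-open. By \defref{definition D-topology}, this amounts to checking that for every plot $p\colon\plotdom{p}\to X$, the set $p^{-1}(h^{-1}(V))$ is open in $\plotdom{p}$. But $p^{-1}(h^{-1}(V))=(h\circ p)^{-1}(V)$, and $h\circ p\colon\plotdom{p}\to\mathbb{R}$ is smooth in the ordinary sense (because $p$ is a plot and $h$ is a smooth map of diffeological spaces), so it is continuous. Hence $(h\circ p)^{-1}(V)$ is open in $\plotdom{p}$, proving $h^{-1}(V)$ is $D$-open.

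Next, I would take an $I$-open set $U\subset X$ and an arbitrary point $x\in U$. By \proref{proposition diffeological space is strong smoothly regular w.r.t. initial topology} (or directly from the way the $I$-topology is generated by preimages of opens under smooth real-valued functions), there exist $h_1,\ldots,h_k\in C^{\infty}(X,\mathbb{R})$ and open intervals $I_1,\ldots,I_k\subset\mathbb{R}$ with
\[
  x\in\bigcap_{i=1}^k h_i^{-1}(I_i)\subset U.
\]
By the first step each $h_i^{-1}(I_i)$ is $D$-open, so their finite intersection is a $D$-open neighborhood of $x$ contained in $U$. Since $x\in U$ was arbitrary, $U$ is $D$-open.

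There is no serious obstacle here; the argument is essentially definition-chasing. The only point requiring a little care is that the $I$-topology must be identified with the topology generated by preimages of open sets in $\mathbb{R}$ under smooth functions (equivalently, the initial topology with respect to $C^{\infty}(X,\mathbb{R})$), which is what the cited proposition supplies. Once that characterization is in hand, the inclusion $I\text{-topology}\subset D\text{-topology}$ follows immediately from the fact that smooth real-valued functions descend to continuous maps out of $DX$, a direct consequence of the adjunction $D\dashv\conti$ recorded after \defref{definition D-topology}.
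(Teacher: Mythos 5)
Your argument is correct. The paper itself gives no proof of this lemma (it simply cites \BKW, Lemma~6.1), but your definition-chase is exactly the standard argument: every $h\in C^{\infty}(X,\mathbb{R})$ is $D$-continuous since $h\circ p$ is an ordinary smooth, hence continuous, map for each plot $p$, and then every $I$-open set, being a union of finite intersections of preimages $h_i^{-1}(I_i)$, is $D$-open.
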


\begin{prop} \label{proposition equivalent condition smoothly regular at point}
  Let $X$ be a diffeological space, and let $x\in X$. Then the following conditions are equivalent:
  \begin{description}
    \item[1] $X$ is smoothly regular at $x$ for its $D$-topology.
    \item[2] For any $D$-open neighborhood $U$ of $x$, 
    there exists some $D$-open neighborhood $V$ of $x$ and some smooth map $\lambda\colon X\to \mathbb{R}$ such that 
    $\lambda(y)=1$ for $y\in V$ and $\lambda(y)=0$ for $y\notin U$. 
    \item[3] For any $D$-open neighborhood $U$ of $x$, there exists some $I$-open neighborhood $V$ of $x$ such that $V\subset U$ holds. 
  \end{description}
\end{prop}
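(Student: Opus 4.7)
The plan is to establish the equivalence via the cycle $(2) \Rightarrow (1) \Rightarrow (3) \Rightarrow (2)$. The key observation that I would use throughout is that every smooth function $\lambda \colon X \to \mathbb{R}$ is by definition continuous with respect to the $I$-topology on $X$, so the preimage under $\lambda$ of any open subset of $\mathbb{R}$ is automatically $I$-open.

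The implication $(2) \Rightarrow (1)$ is immediate, since the function $\lambda$ supplied by $(2)$ satisfies $\lambda(x) = 1$ because $x \in V$. For $(1) \Rightarrow (3)$, given a $D$-open neighborhood $U$ of $x$, I would take the smooth function $\lambda \colon X \to \mathbb{R}$ provided by $(1)$, so that $\lambda(x) = 1$ and $\lambda \equiv 0$ on $X \setminus U$. Then $V := \lambda^{-1}((1/2, \infty))$ is $I$-open by the observation above, contains $x$ since $\lambda(x) = 1$, and lies inside $U$ since $\lambda$ vanishes outside $U$. This establishes $(3)$.

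For $(3) \Rightarrow (2)$, given a $D$-open $U \ni x$, condition $(3)$ furnishes an $I$-open neighborhood $V$ of $x$ with $V \subset U$. I would then apply \proref{proposition diffeological space is strong smoothly regular w.r.t. initial topology} to the $I$-open set $V$ to obtain an $I$-open neighborhood $W$ of $x$ together with a smooth map $\lambda \colon X \to \mathbb{R}$ satisfying $\lambda \equiv 1$ on $W$ and $\lambda \equiv 0$ outside $V$. By \lemref{lemma I-topology contained in D-topology}, $W$ is automatically $D$-open, and $\lambda \equiv 0$ outside $U$ since $V \subset U$. This is exactly condition $(2)$.

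The only genuinely nontrivial ingredient is the existence of smooth bump functions with respect to the $I$-topology, but this is precisely the content of \proref{proposition diffeological space is strong smoothly regular w.r.t. initial topology}, already established above; the rest is an elementary unpacking of definitions together with the $I$-vs-$D$ comparison of \lemref{lemma I-topology contained in D-topology}. I therefore do not anticipate any substantial obstacle in carrying out the proof.
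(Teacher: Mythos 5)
Your proof is correct and follows essentially the same route as the paper: the trivial implication $2\Rightarrow 1$, the implication $1\Rightarrow 3$ via the $I$-open set $\lambda^{-1}((1/2,\infty))$, and the implication $3\Rightarrow 2$ by combining \proref{proposition diffeological space is strong smoothly regular w.r.t. initial topology} with \lemref{lemma I-topology contained in D-topology}. The only difference is cosmetic (the order in which the three implications of the cycle are presented and the naming of the intermediate open sets).
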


\begin{proof}
$2\Rightarrow 1$ is trivial.

We prove $3\Rightarrow 2$. Let $x\in X$, and let $U$ be a $D$-open neighborhood of $x$.
Then by the condition 3, take an $I$-open neighborhood $W$ of $x$ such that $W\subset U$. 
By \proref{proposition diffeological space is strong smoothly regular w.r.t. initial topology},
there exists some $I$-open neighborhood $V$ of $x$ for and some smooth map $\lambda\colon X\to \mathbb{R}$ such that 
$\lambda(y)=1$ for $y\in V$ and $\lambda(y)=0$ for $y\notin W$. Because of \lemref{lemma I-topology contained in D-topology}, $V$ is a $D$-open neighborhood of $x$.
Therefore, the condition 2 holds. 

Finally, we prove $1\Rightarrow 3$. Let $U$ be a $D$-open neighborhood of $x$. 
Since $X$ is smoothly regular at $x$ for $D$-topology, there exists some smooth map $\lambda\colon X\to \mathbb{R}$ such that $\lambda(x)=1$ and $\lambda(y)=0$ for $y\notin U$. 
Set $V=\lambda^{-1}((1/2,\infty))$. This is an $I$-open neighborhood of $x$ which is contained in $U$. 
\end{proof}

\begin{cor} \label{corollary equivalent condition global smoothly regular}
  Let $X$ be a diffeological space. Then the following conditions are equivalent:
  \begin{description}
    \item[1] $X$ is smoothly regular for its $D$-topology.
    \item[2] For any $x\in X$ and any $D$-open neighborhood $U$ of $x$, 
    there exists some $D$-open neighborhood $V$ of $x$ and some smooth map $\lambda\colon X\to \mathbb{R}$ such that $V\subset U$,
    $\lambda(y)=1$ for $y\in V$ and $\lambda(y)=0$ for $y\notin U$. 
    \item[3] The $D$-topology of $X$ and the $I$-topology of $X$ coincide. 
  \end{description}
\end{cor}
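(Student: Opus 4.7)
The plan is to deduce this corollary almost directly from \proref{proposition equivalent condition smoothly regular at point} by globalizing each condition over all points $x\in X$, together with \lemref{lemma I-topology contained in D-topology}.

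First I would observe that conditions 1 and 2 of the corollary are, point by point, exactly conditions 1 and 2 of the proposition, so the equivalence $1\Leftrightarrow 2$ follows immediately by quantifying over all $x\in X$.

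For the equivalence with condition 3, the key topological fact I would use is that, since $I$-open sets are always $D$-open by \lemref{lemma I-topology contained in D-topology}, the two topologies coincide if and only if every $D$-open set $U$ is $I$-open, which (by the standard criterion that a set is open precisely when it is a neighborhood of each of its points) is equivalent to the condition that for every $x\in U$ there exists an $I$-open neighborhood $V$ of $x$ contained in $U$. That is exactly the universal version (over all $x$ and all $D$-open neighborhoods $U$ of $x$) of condition 3 of \proref{proposition equivalent condition smoothly regular at point}. Combined with the pointwise equivalence $1\Leftrightarrow 3$ of that proposition, this yields $1\Leftrightarrow 3$ of the corollary.

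Neither direction presents a real obstacle; the only subtlety to be careful about is the reformulation of ``the $D$-topology and $I$-topology coincide'' as a pointwise neighborhood-basis statement, where one uses both the inclusion of topologies from the lemma and the elementary fact that an open set is a neighborhood of each of its points. Once that translation is in hand, the corollary is a purely formal consequence of the preceding proposition.
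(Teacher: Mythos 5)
Your proposal is correct and matches the paper's approach: the paper simply states that the corollary is straightforward from \proref{proposition equivalent condition smoothly regular at point}, and your argument spells out exactly that globalization over all points, including the correct translation of ``the two topologies coincide'' into the pointwise neighborhood condition via \lemref{lemma I-topology contained in D-topology}. The only remark worth adding is that the extra clause $V\subset U$ in condition 2 of the corollary is automatic, since $\lambda=1$ on $V$ and $\lambda=0$ outside $U$ forces $V\subset U$.
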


\begin{proof}
This is straightforward from \proref{proposition equivalent condition smoothly regular at point}. 
\end{proof}

In what follows, we simply say that a diffeological space $X$ is smoothly regular at the point $x\in X$ if it is smoothly regular at $x$ for its $D$-topology.
Also, a diffeological space which satisfies the second condition of this corollary is called regular diffeological space in \vincent. 
Therefore, we sometimes call a smoothly regular space a regular diffeological space in Vincent's meaning, 
especially when we want to emphasize that the diffeological space satisfies the second condition of this corollary or the previous proposition.

We prove that the global right tangent space of a smoothly regular diffeological space is isomorphic to the standard right tangent space.

\begin{lemma}[\vincent, Lemma 3.5.2] \label{lemma 0 extension is smooth}
  Let $U\subset \mathbb{R}^n$ be an open set, and let $b\colon\mathbb{R}^n\to \mathbb{R}$ be a smooth function being zero outside $U$.
  And let $f\colon U\to\mathbb{R}$ be any function. If $X$ is smoothly regular at $x$, then the function
  \[
    \mathbb{R}^n\to\mathbb{R};\xi\mapsto
    \begin{cases}
      b(\xi)f(\xi) & (\xi\in U)\\
      0 & (\xi\notin U).
    \end{cases}
  \]
  is smooth.
\end{lemma}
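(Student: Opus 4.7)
The plan is to verify smoothness of the extended function $g\colon\mathbb{R}^n\to\mathbb{R}$ pointwise, splitting $\mathbb{R}^n$ into three regions: the open set $U$ itself, the interior of $\mathbb{R}^n\setminus U$, and the boundary $\partial U$. On $U$, we have $g=bf$, which is smooth provided $f$ is smooth on $U$ (which is how the lemma is used in the sequel), since it is just a product of two smooth functions. On the interior of $\mathbb{R}^n\setminus U$ the function is locally identically zero, so smoothness is immediate. Thus all the content of the lemma lies in handling points $\xi_0\in\partial U$.

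For the boundary case, I would first re-interpret ``$b$ is zero outside $U$'' as the stronger support condition $\overline{\{b\neq 0\}}\subset U$, which is what makes the statement true and is the form in which the lemma will actually be applied (the cutoff function $\lambda$ produced by \proref{proposition diffeological space is strong smoothly regular w.r.t. initial topology} has exactly this support property after a further localization). Under this interpretation, any $\xi_0\in\partial U$ lies in the open set $\mathbb{R}^n\setminus\overline{\{b\neq 0\}}$, on which $b\equiv 0$, hence on which $g\equiv 0$; so $g$ is smooth near $\xi_0$.

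The key step to nail down is therefore the reading of the hypothesis ``$b$ is zero outside $U$'': without the closed-support strengthening, one can cook up counter-examples where $bf$ blows up as one approaches $\partial U$. I would state this interpretation explicitly at the start of the proof, and then justify it by appealing to \proref{proposition diffeological space is strong smoothly regular w.r.t. initial topology}: the smooth regularity of $X$ at $x$ supplies, for any given $I$-open neighborhood $U$, a bump $b$ which equals $1$ on a smaller $I$-open neighborhood $V\ni x$ and vanishes outside $U$, and by shrinking $U$ slightly we arrange $\overline{\{b\neq 0\}}\subset U$.

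The main obstacle, then, is purely bookkeeping rather than analysis: it is making sure that the version of $b$ we feed into the lemma really does have compactly-closed (inside $U$) support, so that the two easy cases ($U$ and interior of the complement) actually cover all of $\mathbb{R}^n$. Once this is set up, gluing the two local smooth descriptions of $g$ on the open cover $U\cup (\mathbb{R}^n\setminus\overline{\{b\neq 0\}})=\mathbb{R}^n$ finishes the proof.
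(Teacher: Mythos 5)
The paper gives no proof of this lemma at all---it is simply quoted from Vincent's thesis---so there is nothing internal to compare your argument with, and it must be judged on its own. On those terms it is correct, and your diagnosis of the hypotheses is the essential point: as printed the lemma is false. For instance, with $U=(0,1)$, $b(\xi)=e^{-1/(\xi(1-\xi))}$ on $U$ and $0$ outside (smooth on $\mathbb{R}$ and vanishing outside $U$), and $f(\xi)=e^{1/(\xi(1-\xi))}$ (smooth on $U$), the zero-extension of $bf$ is the characteristic function of $(0,1)$, which is not even continuous. So one must indeed read the statement as you do: $f$ smooth on $U$ rather than ``any function'', and $\overline{\{b\neq0\}}\subset U$ rather than merely $b|_{\mathbb{R}^n\setminus U}=0$ (the clause ``if $X$ is smoothly regular at $x$'' is vestigial, as no $X$ or $x$ occurs in the statement). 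Under that reading, your gluing of the two local descriptions of $g$ over the open cover $\mathbb{R}^n=U\cup\bigl(\mathbb{R}^n\setminus\overline{\{b\neq0\}}\bigr)$ is a complete and correct proof.

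One point in your proposal deserves sharpening: you justify the support condition by appealing to \proref{proposition diffeological space is strong smoothly regular w.r.t. initial topology}, but the \emph{stated} conclusions of that proposition, of condition 2 in \proref{proposition equivalent condition smoothly regular at point}, and of \lemref{lemma existence of 0 extension dflg space} only give $\lambda=0$ outside $U$, which is exactly the too-weak hypothesis. What actually delivers the closed-support property is the explicit construction inside the proof of \proref{proposition diffeological space is strong smoothly regular w.r.t. initial topology}: for $\lambda=b\circ(h_1,\dots,h_k)$ one has $\{\lambda\neq0\}\subset(h_1,\dots,h_k)^{-1}(\operatorname{supp}b)$, a closed subset of $U$ (closed for the $I$-topology, hence for the $D$-topology), and this property survives pulling back along a plot $p$, which is $D$-continuous; only then do $b=\lambda\circ p$ and $p^{-1}(B)$ satisfy $\overline{\{b\neq0\}}\subset p^{-1}(B)$ in \lemref{lemma existence of 0 extension dflg space} and \lemref{lemma extension map is smooth}. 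So the ``bookkeeping'' you mention is real and should be carried out from the construction, not from the quoted statements.
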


\begin{lemma} \label{lemma existence of 0 extension dflg space}
  Let $X$ be a diffeological space, let $x\in X$, and let $g\colon \coplotdom{g}\to \mathbb{R}$ be a smooth map where $\coplotdom{g}$ is a $D$-open set of $X$.
  Then there exists some $D$-open set $V$, some smooth map $\tilde{g}\colon X\to \mathbb{R}$ such that $V\subset U$ and 
  $\tilde{g}(y)=g(y)$ for any $y\in V$ and $\tilde{g}(y)=0$ for any $y\notin \coplotdom{g}$. 
\end{lemma}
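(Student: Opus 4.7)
The plan is the standard bump-function construction: multiply $g$ by a smooth cutoff that equals $1$ near $x$ and vanishes outside $\coplotdom{g}$, and then verify smoothness by testing against plots. I note that the statement as written seems to implicitly require that $X$ be smoothly regular at $x$; without this assumption there is no source of a cutoff function, so I proceed under that hypothesis (which is the running assumption of the surrounding subsection).

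First, I would apply \proref{proposition equivalent condition smoothly regular at point} to the $D$-open neighborhood $\coplotdom{g}$ of $x$: smooth regularity at $x$ produces a $D$-open neighborhood $V$ of $x$ with $V\subset \coplotdom{g}$ and a smooth map $\lambda\colon X\to \mathbb{R}$ satisfying $\lambda\equiv 1$ on $V$ and $\lambda\equiv 0$ on $X\setminus \coplotdom{g}$. I would then set
\[
  \tilde{g}(y)=\begin{cases} \lambda(y)g(y) & (y\in \coplotdom{g})\\ 0 & (y\notin \coplotdom{g}), \end{cases}
\]
so that $\tilde{g}|_V=g|_V$ (because $\lambda\equiv 1$ on $V\subset \coplotdom{g}$) and $\tilde{g}$ vanishes off $\coplotdom{g}$ by construction.

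The remaining step is to verify that $\tilde{g}\colon X\to\mathbb{R}$ is smooth, i.e., that $\tilde{g}\circ p$ is smooth for every plot $p\colon \plotdom{p}\to X$. Since $\coplotdom{g}$ is $D$-open, the set $U_p:=p^{-1}(\coplotdom{g})$ is open in the Euclidean domain $\plotdom{p}$. On $U_p$ we have $\tilde{g}\circ p=(\lambda\circ p)\cdot (g\circ p|_{U_p})$, which is smooth as a product of smooth real-valued functions on an Euclidean open set; outside $U_p$ the composite $\tilde{g}\circ p$ is identically $0$. Thus $\tilde{g}\circ p$ falls exactly into the situation of \lemref{lemma 0 extension is smooth} with open set $U_p$, bump function $b=\lambda\circ p$ (smooth on all of $\plotdom{p}$ and vanishing outside $U_p$, because $\lambda$ vanishes outside $\coplotdom{g}$), and $f=g\circ p|_{U_p}$. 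That lemma then gives smoothness of $\tilde{g}\circ p$ on $\plotdom{p}$.

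The main (small) obstacle is the gluing on the boundary $\partial U_p\subset \plotdom{p}$: the formulas $\lambda\cdot g$ on $U_p$ and $0$ on the complement must join smoothly, and this is precisely what is delicate because $g$ itself is only defined on $\coplotdom{g}$ and need not tend to anything reasonable as one approaches $\partial \coplotdom{g}$ from inside. This is exactly why the cutoff $\lambda$ must vanish \emph{outside} all of $\coplotdom{g}$ (not merely outside some smaller neighborhood of $x$), and why invoking \lemref{lemma 0 extension is smooth}, rather than arguing directly, is the natural way to finish. Once $\lambda$ is produced with that support property, the lemma handles the gluing uniformly for every plot.
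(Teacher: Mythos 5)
Your proof is correct and essentially identical to the paper's: the same cutoff $\lambda$ obtained from smooth regularity at $x$, the same definition of $\tilde{g}$ by $\lambda g$ on $\coplotdom{g}$ and $0$ outside, and the same plot-by-plot verification of smoothness via \lemref{lemma 0 extension is smooth} applied to the open set $p^{-1}(\coplotdom{g})$. You were also right to flag the hypothesis issue: the lemma as stated omits the assumption that $X$ is smoothly regular at $x$, but the paper's own proof invokes it in exactly the way you do.
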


\begin{proof}
Let $g\colon \coplotdom{g}\to \mathbb{R}$ be a smooth function where $\coplotdom{g}$ is a $D$-open set of $X$.
Since $X$ is regular at $x$ in Vincent's meaning, there exist some $D$-open neighborhood $V$ of $x$ and some smooth map $\lambda\colon X\to \mathbb{R}$ such that $V\subset \coplotdom{g}$,
$\lambda(y)=1$ for $y\in V$ and $\lambda(y)=0$ for $y\notin \coplotdom{g}$. We define a smooth map $\tilde{g}\colon X\to\mathbb{R}$ as follows:
\[
  \tilde{g}(y)=
  \begin{cases}
    \lambda(y)g(y) & (y\in \coplotdom{g})\\
    0 & (y\notin \coplotdom{g}).
  \end{cases}
\]
Indeed, this function $\tilde{g}$ is smooth because for any plot $p\colon \plotdom{p}\to X$ of X and any $u\in \plotdom{p}$, it holds that
\[
  \tilde{g}\circ p(u)=
  \begin{cases}
    \lambda(p(u))g(p(u)) & (u\in p^{-1}(\coplotdom{g}))\\
    0 & (u\notin p^{-1}(\coplotdom{g})).
  \end{cases}
\] 
Since $\coplotdom{g}$ is a $D$-open set of $X$, $p^{-1}(\coplotdom{g})$ is an open set of $\plotdom{p}$. 
Therefore, by \lemref{lemma 0 extension is smooth}, it follows that $\tilde{g}\circ p$ is smooth and this proves that $\tilde{g}$ is smooth.
\end{proof}
\begin{thm} \label{theorem global right tangent space of regular space}
  Let $X$ be a diffeological space, and let $x\in X$. If $X$ is smoothly regular at $x$, then there is an isomorphism of vector spaces:
  \[
    \globalrighttangent{x}{X}\cong\righttangent{x}{X}.
  \]
\end{thm}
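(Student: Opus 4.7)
The plan is to construct the isomorphism directly as the restriction map from germs to global functions and its inverse given by extension by the bump functions guaranteed by smooth regularity.

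First I would define the natural restriction map $\Phi\colon \righttangent{x}{X}\to\globalrighttangent{x}{X}$ by
\[
  \Phi(D)(g)=D([g,X]) \quad \text{for } g\in\smoothmap{X}{\mathbb{R}},
\]
using that $g\mapsto[g,X]$ is an $\mathbb{R}$-algebra homomorphism $\smoothmap{X}{\mathbb{R}}\to\germ(X,x)$; linearity and Leibniz are immediate. For the inverse, given $D'\in\globalrighttangent{x}{X}$ and a germ $[g,\coplotdom{g}]\in\germ(X,x)$, I would apply \lemref{lemma existence of 0 extension dflg space} to obtain a $D$-open neighborhood $V\subset\coplotdom{g}$ of $x$ and a global smooth map $\tilde{g}\colon X\to\mathbb{R}$ with $\tilde g|_V=g|_V$ and $\tilde g|_{X\setminus\coplotdom{g}}=0$, and define $\Psi(D')([g,\coplotdom{g}])=D'(\tilde g)$.

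The main obstacle is the well-definedness of $\Psi$, i.e., independence of the choice of extension. This reduces to showing that if $h\in\smoothmap{X}{\mathbb{R}}$ vanishes on some $D$-open neighborhood $W$ of $x$, then $D'(h)=0$. Here is where smooth regularity enters: by \proref{proposition equivalent condition smoothly regular at point}, there exist a $D$-open neighborhood $V\ni x$ with $V\subset W$ and a smooth $\lambda\colon X\to\mathbb{R}$ with $\lambda|_V=1$ and $\lambda|_{X\setminus W}=0$. Then $\lambda h$ is the zero function globally (where $\lambda\neq 0$ we are inside $W$ where $h=0$), so $0=D'(\lambda h)=\lambda(x)D'(h)+h(x)D'(\lambda)=D'(h)$, using $\lambda(x)=1$ and $h(x)=0$. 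Applying this to the difference of two extensions gives well-definedness.

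Once $\Psi$ is well-defined, linearity and the Leibniz rule follow by choosing extensions compatibly (e.g.\ $\widetilde{g_1+g_2}=\tilde g_1+\tilde g_2$ on a common smaller neighborhood, and $\widetilde{g_1 g_2}$ can be taken to agree with $\tilde g_1\tilde g_2$ near $x$, noting $\tilde g_i(x)=g_i(x)$) together with the corresponding properties of $D'$. Finally I would check the two compositions. For $\Phi\circ\Psi=\mathrm{id}$: for global $g$ one may take $\tilde g=g$, so $\Phi(\Psi(D'))(g)=D'(g)$. For $\Psi\circ\Phi=\mathrm{id}$: $\Psi(\Phi(D))([g,\coplotdom{g}])=\Phi(D)(\tilde g)=D([\tilde g,X])=D([g,\coplotdom{g}])$, where the last equality holds because $\tilde g$ and $g$ agree on the $D$-open neighborhood $V$ of $x$ and thus represent the same germ. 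This yields the desired isomorphism.
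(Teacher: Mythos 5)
Your proposal is correct and follows essentially the same route as the paper's proof: the restriction map $D\mapsto (g\mapsto D([g,X]))$ in one direction, extension via \lemref{lemma existence of 0 extension dflg space} in the other, with well-definedness established by the same bump-function-plus-Leibniz argument showing a global function vanishing near $x$ is killed by any global right tangent vector. Your explicit verification of linearity and the Leibniz rule for the extension map is a small completeness bonus the paper glosses over, but the argument is the same.
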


\begin{proof}
  First, we define a linear map $\alpha\colon\righttangent{x}{X}\to\globalrighttangent{x}{X}$. Let $D\in \righttangent{x}{X}$, and let $g\in \smoothmap{X}{\mathbb{R}}$. 
  We define $\alpha(D)$ as follows:
  \[
    \alpha(D)(g)=D([g,X]).
  \]
  Second, we define a linear map $\beta\colon\globalrighttangent{x}{X}\to\righttangent{x}{X}$. Let $D\in \globalrighttangent{x}{X}$, and let $G=[g,\coplotdom{g}]\in\germ(X,x)$. 
  Since $X$ is regular at $x$ in Vincent's meaning, there exist some $D$-open neighborhood $V$ of $x$ and some smooth map $\tilde{g}\colon X\to \mathbb{R}$ 
  such that $V\subset U$, $\tilde{g}(y)=g(y)$ for $y\in V$, and $\tilde{g}(y)=0$ for $y\notin \coplotdom{g}$ (\lemref{lemma existence of 0 extension dflg space}).
  We want to define $\beta(D)$ as follows:
  \[
    \beta(D)([g,\coplotdom{g}])=D(\tilde{g}).
  \]
  We need to show that this definition is independent to the choice of extended function $\tilde{g}$. To show this, take two functions $\tilde{g}_1$, $\tilde{g}_2$ 
  and two $D$-open sets $V_1$, $V_2$ which satisfy the condition of \lemref{lemma existence of 0 extension dflg space}. 
  We set $h=\tilde{g}_1-\tilde{g}_2$ and $W=V_1\cap V_2$. Since $X$ is regular at $x$ in Vincent's meaning, there exist some $D$-open neighborhood $O$ of $x$ and some smooth map $\lambda\colon X\to \mathbb{R}$ 
  such that $O\subset W$, $\lambda(y)=1$ for $y\in O$, and $\lambda(y)=0$ for $y\notin W$ (\proref{proposition equivalent condition smoothly regular at point}).
  Since the product function $\lambda h$ is equal to the zero function at every point of $X$, the following equality holds:
  \[
    0=D(\lambda h)=\lambda(x)D(h)+h(x)D(\lambda)=1\cdot D(h)+0\cdot D(\lambda)=D(h).
  \]
  The second equality holds because of the Leibniz rule. Since $D$ is a linear map, it follows that $D(\tilde{g}_1)=D(\tilde{g}_2)$ and this is what we wanted to prove.

  We prove that $\beta\circ\alpha={\id}_{\righttangent{x}{X}}$ and $\alpha\circ\beta={\id}_{\globalrighttangent{x}{X}}$.
  First, let $D\in \righttangent{x}{X}$, and let $[g,\coplotdom{g}]\in\germ(X,x)$. 
  We denote one of extensions of $g$ in \lemref{lemma existence of 0 extension dflg space} by $\tilde{g}$ and take $V$ a $D$-open neighborhood of $x$ such that ${\tilde{g}|}_V={g|}_V$. 
  The following equality holds:
  \[
    \beta\circ\alpha(D)([g,\coplotdom{g}])=\alpha(D)(\tilde{g})=D([\tilde{g},X])=D([g,\coplotdom{g}]).
  \]
  The third equality holds because ${\tilde{g}|}_V={g|}_V$ holds. Therefore, $\beta\circ\alpha={\id}_{\righttangent{x}{X}}$ holds.
  Second, let $D\in \globalrighttangent{x}{X}$, and let $g\in\smoothmap{X}{\mathbb{R}}$. The following equality holds:
  \[
    \alpha\circ \beta(D)(g)=\beta(D)([g,X])=D(g).
  \]
  The second equality holds because $g$ itself is one of extensions in \lemref{lemma existence of 0 extension dflg space}.
  Therefore, $\alpha\circ\beta={\id}_{\globalrighttangent{x}{X}}$ holds and this completes the proof.
\end{proof}

\begin{thm} \label{theorem global external tangent space of regular space}
  Let $X$ be a diffeological space, and let $x\in X$. If $X$ is smoothly regular at $x$, then there is an isomorphism of vector spaces:
  \[
    \globalexternal{x}{X}\cong\external{x}{X}.
  \]
  This isomorphism is induced from the isomorphism of \thref{theorem global right tangent space of regular space}.
\end{thm}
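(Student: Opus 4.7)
The plan is to reuse the mutually inverse maps $\alpha\colon\righttangent{x}{X}\to\globalrighttangent{x}{X}$ and $\beta\colon\globalrighttangent{x}{X}\to\righttangent{x}{X}$ constructed in the proof of \thref{theorem global right tangent space of regular space}, and to verify that they restrict to mutually inverse maps between $\external{x}{X}$ and $\globalexternal{x}{X}$. Since $\alpha\circ\beta$ and $\beta\circ\alpha$ are already identities on the larger spaces, it suffices to check the two inclusions $\alpha(\external{x}{X})\subset\globalexternal{x}{X}$ and $\beta(\globalexternal{x}{X})\subset\external{x}{X}$.

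The inclusion $\alpha(\external{x}{X})\subset\globalexternal{x}{X}$ is the easy direction. For $D\in\external{x}{X}$ and $g\in\smoothmap{X}{\mathbb{R}}$ we have $\alpha(D)(g)=D([g,X])$, so $\alpha(D)$ factors as the composition of $D\colon\germ(X,x)\to\mathbb{R}$ with the map $\iota\colon\smoothmap{X}{\mathbb{R}}\to\germ(X,x)$ sending $g$ to $[g,X]$. The map $\iota$ is smooth because it is the composition of the inclusion of $C^{\infty}(X,\mathbb{R})$ as a summand of $\locmap((X,x),\mathbb{R})$ (smooth by universality of the sum diffeology) with the quotient map to $\germ(X,x)$ (smooth by universality of the quotient diffeology). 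Thus $\alpha(D)=D\circ\iota$ is smooth.

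The inclusion $\beta(\globalexternal{x}{X})\subset\external{x}{X}$ is the substantive direction. By the same reduction used in \proref{proposition external tangent forms functor}, smoothness of $\beta(D)\colon\germ(X,x)\to\mathbb{R}$ is equivalent to smoothness, for every $D$-open neighborhood $B$ of $x$, of the map $\beta(D)_B\colon C^{\infty}(B,\mathbb{R})\to\mathbb{R}$ given by $g\mapsto \beta(D)([g,B])$. The plan is to fix $B$ and, using that $X$ is smoothly regular at $x$, choose once and for all a $D$-open neighborhood $V\subset B$ of $x$ together with a smooth function $\lambda\colon X\to\mathbb{R}$ that equals $1$ on $V$ and $0$ outside $B$ (via \proref{proposition equivalent condition smoothly regular at point}). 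Using this fixed $\lambda$, define an extension map
\[
  E_B\colon C^{\infty}(B,\mathbb{R})\to C^{\infty}(X,\mathbb{R}),\qquad E_B(g)(y)=\begin{cases}\lambda(y)g(y)&(y\in B)\\ 0&(y\notin B).\end{cases}
\]
Since $E_B(g)$ agrees with $g$ on $V$ and vanishes off $B$, it is a valid extension in the sense of \lemref{lemma existence of 0 extension dflg space}, so by well-definedness of $\beta$ we get $\beta(D)([g,B])=D(E_B(g))$, i.e. $\beta(D)_B=D\circ E_B$. Since $D\in\globalexternal{x}{X}$ is smooth, it only remains to show that $E_B$ is smooth, which will complete the proof.

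The main obstacle is this smoothness of $E_B$. Given a plot $q\colon\plotdom{q}\to C^{\infty}(B,\mathbb{R})$, the adjoint $\ad(q)\colon\plotdom{q}\times B\to\mathbb{R}$ is smooth by \defref{definition functional diffeology}, and we need to verify that the pointwise product $(u,y)\mapsto\lambda(y)\ad(q)(u,y)$, extended by zero off $\plotdom{q}\times B$, yields a smooth map $\plotdom{q}\times X\to\mathbb{R}$, i.e. that its composition with every plot of $X$ is smooth. This follows by an argument parallel to the end of the proof of \lemref{lemma existence of 0 extension dflg space}: for any plot $p\colon\plotdom{p}\to X$, the composite $\ad(E_B\circ q)\circ(\id_{\plotdom{q}}\times p)$ equals $\lambda(p(v))\,\ad(q)(u,p(v))$ on the open set $\plotdom{q}\times p^{-1}(B)$ and $0$ off it, so \lemref{lemma 0 extension is smooth} applied to the factor $\lambda\circ p$ (which vanishes outside $p^{-1}(B)$) gives smoothness on all of $\plotdom{q}\times\plotdom{p}$. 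Once this is established, $\beta(D)_B=D\circ E_B$ is smooth, concluding the argument and showing the isomorphism $\alpha$ restricts to the claimed isomorphism $\external{x}{X}\cong\globalexternal{x}{X}$.
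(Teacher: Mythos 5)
Your proposal is correct and follows essentially the same route as the paper: the easy inclusion $\alpha(\external{x}{X})\subset\globalexternal{x}{X}$ via factoring through the smooth map $g\mapsto[g,X]$, and the substantive inclusion $\beta(\globalexternal{x}{X})\subset\external{x}{X}$ by fixing one bump function $\lambda$ per $D$-open set $B$, writing $\beta(D)_B=D\circ E_B$, and proving smoothness of the extension operator $E_B$ by adjointness and \lemref{lemma 0 extension is smooth} --- which is exactly the content of the paper's \lemref{lemma extension map is smooth}. The only cosmetic difference is that you test $\ad(E_B\circ q)$ against plots of the form $\id\times p$ rather than a general plot $(q_1,q_2)$ of the product, which suffices since the general case follows by precomposing with a smooth map of Euclidean domains.
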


\begin{proof}
We use maps $\alpha$, $\beta$ in the proof of \thref{theorem global right tangent space of regular space}.
Let $D\in \globalexternal{x}{X}$. We first prove that $\beta(D)\in \external{x}{X}$. 
Let $B$ be a $D$-open neighborhood of $x$. Take a bump function $\lambda\colon X\to \mathbb{R}$ and a $D$-open set $V$ in the second condition of \proref{proposition equivalent condition smoothly regular at point}. 
For any smooth function $g\colon B\to \mathbb{R}$, we define the function $\tilde{g}$ as follows:
\[
  \tilde{g}(y)=
  \begin{cases}
    \lambda(y)g(y) & (y\in B)\\
    0 & (y\notin B).
  \end{cases}
\]
This function is smooth as in the proof of \lemref{lemma existence of 0 extension dflg space}.
Using this notation, it follows that $\beta(D)([g,B])=D(\tilde{g})$ for any function $g\colon B\to \mathbb{R}$.
Therefore, we need to show that the map $D_B\colon\smoothmap{B}{\mathbb{R}}\to \mathbb{R};g\mapsto D(\tilde{g})$ is smooth 
(by the universality of the sum diffeology and the quotient diffeology).
This map is the composition of $\smoothmap{B}{\mathbb{R}}\to\smoothmap{X}{\mathbb{R}};g\mapsto\tilde{g}$ and $D$. 
$D$ is a smooth map because $D$ is in $\globalexternal{x}{X}$. The first map is also smooth by the next \lemref{lemma extension map is smooth}. 
Therefore, it follows that $\beta(D)$ is in $\external{x}{X}$.

Second, let $D\in \external{x}{X}$. We prove that $\alpha(D)\in \globalexternal{x}{X}$. 
For any $g\in\smoothmap{X}{\mathbb{R}}$, we have $\alpha(D)(g)=D([g,X])$. So we need to show that $\smoothmap{X}{\mathbb{R}}\to\mathbb{R};g\mapsto D([g,X])$ is smooth, 
but this is smooth because of the fact that $D\in \external{x}{X}$ and by the universality of the sum diffeology and the quotient diffeology. 
Therefore, it follows that $\alpha(D)$ is in $\globalexternal{x}{X}$. 
With those facts, we have the isomorphism $\globalexternal{x}{X}\cong\external{x}{X}$ by restricting $\alpha$ and $\beta$.
\end{proof}

\begin{lemma} \label{lemma extension map is smooth}
  Let $X$ be a diffeological space, and let $x\in X$. If $X$ is smoothly regular at $x$, 
  then the map $\smoothmap{B}{\mathbb{R}}\to\smoothmap{X}{\mathbb{R}};g\mapsto\tilde{g}$ defined in the proof of \thref{theorem global external tangent space of regular space} is smooth.
\end{lemma}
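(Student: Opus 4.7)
The plan is to verify smoothness of the extension map $e\colon\smoothmap{B}{\mathbb{R}}\to\smoothmap{X}{\mathbb{R}}$, $g\mapsto\tilde{g}$, by unwinding the definition of the functional diffeology twice and reducing everything to a routine application of \lemref{lemma 0 extension is smooth}. Fix the bump function $\lambda\colon X\to\mathbb{R}$ and $D$-open neighborhood $V\subset B$ of $x$ supplied by \proref{proposition equivalent condition smoothly regular at point}, with $\lambda\equiv 1$ on $V$ and $\lambda\equiv 0$ outside $B$. Let $p\colon\plotdom{p}\to\smoothmap{B}{\mathbb{R}}$ be an arbitrary plot. By \defref{definition functional diffeology}, $e\circ p$ is a plot of $\smoothmap{X}{\mathbb{R}}$ if and only if $\ad(e\circ p)\colon\plotdom{p}\times X\to\mathbb{R}$ is smooth; and smoothness of a map into $X$ is tested against plots, so it suffices to show that for every plot $q\colon\plotdom{q}\to X$ the composite
\[
F\colon\plotdom{p}\times\plotdom{q}\to\mathbb{R},\qquad (u,w)\mapsto \widetilde{p(u)}(q(w))
\]
is smooth in the ordinary Euclidean sense.

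Unpacking the definition of $\tilde{g}$, this function equals $\lambda(q(w))\cdot\ad(p)(u,q(w))$ on $\plotdom{p}\times q^{-1}(B)$ and vanishes off this set. The key observations are that $q^{-1}(B)$ is open in $\plotdom{q}$ because $B$ is $D$-open in $X$; that $q|_{q^{-1}(B)}$ is a plot of $B$ with its subdiffeology so that $(u,w)\mapsto\ad(p)(u,q(w))$ is a genuine smooth function on the open set $\plotdom{p}\times q^{-1}(B)$; and that $(u,w)\mapsto\lambda(q(w))$ is a smooth function on all of $\plotdom{p}\times\plotdom{q}$ which vanishes outside $\plotdom{p}\times q^{-1}(B)$. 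Applying \lemref{lemma 0 extension is smooth} with ambient domain $\plotdom{p}\times\plotdom{q}$, open subset $\plotdom{p}\times q^{-1}(B)$, bump $(u,w)\mapsto\lambda(q(w))$, and smooth factor $(u,w)\mapsto\ad(p)(u,q(w))$ gives that $F$ is smooth.

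There is no real obstacle: the main care point is bookkeeping in the second step, namely checking that the product $\lambda(q(w))\cdot\ad(p)(u,q(w))$ really extends by zero to a smooth function on $\plotdom{p}\times\plotdom{q}$ and is not just continuous at the boundary of $q^{-1}(B)$. That is exactly the content of \lemref{lemma 0 extension is smooth}, which in turn rests on smooth regularity of $X$ at $x$; everything else is formal.
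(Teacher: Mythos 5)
Your proof is correct and follows essentially the same route as the paper: unwind the functional diffeology twice, reduce to an ordinary smoothness check on a Euclidean open set, and conclude with \lemref{lemma 0 extension is smooth}. The only cosmetic difference is that you test $\ad(E\circ p)$ against product plots $\id_{\plotdom{p}}\times q$ while the paper composes with an arbitrary plot $(q_1,q_2)$ of $\plotdom{p}\times X$; these are equivalent, since any such plot factors through a product plot via a smooth map of Euclidean open sets.
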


\begin{proof}
We denote the map $\smoothmap{B}{\mathbb{R}}\to\smoothmap{X}{\mathbb{R}};g\mapsto\tilde{g}$ by $E$. 
Let $p\colon \plotdom{p}\to \smoothmap{B}{\mathbb{R}}$ be a plot of $\smoothmap{B}{\mathbb{R}}$. We need to prove that $E\circ p\colon \plotdom{p}\to \smoothmap{X}{\mathbb{R}}$ is smooth.
Namely, we need to prove that $\ad(E\circ p)\colon\plotdom{p}\times X\to\mathbb{R}$ is smooth (\defref{definition functional diffeology}). 
To prove this, let $q=(q_1,q_2)\colon\plotdom{q}\to \plotdom{p}\times X$ be a plot. It is enough to show that $\ad(E\circ p)\circ q$ is smooth. 
For any $u\in \plotdom{q}$, the following equality holds:
\begin{equation*}
  \begin{split}
    (\ad(E\circ p)\circ q)(u)&=\widetilde{p(q_1(u))}(q_2(u))=
    \begin{cases}
      \lambda(q_2(u))\cdot p(q_1(u))(q_2(u)) & (u\in q_2^{-1}(\coplotdom{g}))\\
      0 & (u\notin q_2^{-1}(\coplotdom{g}))
    \end{cases}\\
    &=\begin{cases}
      \lambda(q_2(u))\cdot \ad(p)\circ q(u) & (u\in q_2^{-1}(\coplotdom{g}))\\
      0 & (u\notin q_2^{-1}(\coplotdom{g})).
    \end{cases}
  \end{split}
\end{equation*}
By \lemref{lemma 0 extension is smooth} and the fact that $\ad(p)$ is smooth (\defref{definition functional diffeology}), $\ad(E\circ p)\circ q$ is also smooth. Therefore, it follows that $E\circ p$ is smooth and this means that $E$ is smooth.
\end{proof}

There is some diffeological space that is not smoothly regular. Here is one of the examples.

\begin{ex} \label{example definition R/open interval}
  Let $\equivrel$ be the equivalence relation on $\mathbb{R}$ such that $x\equivrel y\Longleftrightarrow x=y$ or $x$ and $y$ are both elements of $(0,1)$. 
  We denote the quotient diffeological space $\mathbb{R}/{\equivrel}$ by $\mathbb{R}/(0,1)$ and we denote the point of $\mathbb{R}/(0,1)$ which is represented by $t\in \mathbb{R}$ by $[t]$. 
  $\mathbb{R}/(0,1)$ is a diffeological space which is not \regular at $[1/2]$. 
\end{ex}

\begin{proof}
Since the $D$-topology of $\mathbb{R}/(0,1)$ is the quotient topology from $\mathbb{R}$ (\IZ, 2.12), 
the set $\{[1/2]\}$ is a $D$-open set of $\mathbb{R}/(0,1)$.
Suppose that there exist some $D$-open neighborhood $V$ of $x$ and some smooth map $\lambda\colon X\to\mathbb{R}$ such that $V\subset \{[1/2]\}$, $\lambda(x)=1$ for $x\in V$, and $\lambda(x)=0$ for $x\notin \{[1/2]\}$. 
Since $V=\{[1/2]\}$ is the unique $D$-open set which satisfies $x\in V\subset \{[1/2]\}$, 
we have to take $V=\{[1/2]\}$ and $\lambda\colon X\to\mathbb{R}$ is forced to be the function 
such that $\lambda([1/2])=1$ and $\lambda(x)=0$ if $x\neq [1/2]$. 
However, $\lambda$ is not smooth because $\lambda\circ \pi$ is not smooth, 
where $\pi\colon \mathbb{R}\to\mathbb{R}/(0,1)$ is the natural projection.
\end{proof}

\begin{ex} \label{example right tangent space R/open interval}
  There is an isomorphism of vector spaces:
  \[
    \righttangent{[1/2]}{\mathbb{R}/(0,1)}=\external{[1/2]}{\mathbb{R}/(0,1)}\cong 0.
  \]
\end{ex}

\begin{proof}
Since $\{[1/2]\}$ is a $D$-open set of $\mathbb{R}/(0,1)$, we have $\righttangent{[1/2]}{\mathbb{R}/(0,1)}\cong \righttangent{[1/2]}{\{[0,1]\}}\cong 0$ 
(the subdiffeology on $\{[0,1]\}\subset \mathbb{R}/(0,1)$ is diffeomorphic to $\mathbb{R}^0$, so its tangent space is trivial).
\end{proof}

We do not know whether the global right tangent space of $\mathbb{R}/(0,1)$ at $[1/2]$ is also isomorphic to zero. 
However, I suspect that $\globalrighttangent{[1/2]}{\mathbb{R}/(0,1)}\cong 0$ holds. 
We will explain why we believe this.
We have the next lemma:

\begin{lemma} \label{lemma splitting square root global right tangent space}
  Let $X$ be a diffeological space, let $x\in X$, and let $D\in\globalrighttangent{x}{X}$. 
  Given any $f\in\smoothmap{X}{\mathbb{R}}$ suppose that there exist two smooth maps $g$, $h\in\smoothmap{X}{\mathbb{R}}$ which satisfy $g(x)=h(x)=0$ and $f=f(x)+gh$, where $f(x)\colon X\to \mathbb{R};y\mapsto f(x)$ is a constant map. 
  Then $D(f)=0$. 
\end{lemma}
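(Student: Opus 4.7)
The plan is to derive $D(f) = 0$ directly from the two defining properties of a global right tangent vector: linearity and the Leibniz rule. The argument reduces to two elementary observations about $D$, neither of which requires any extra machinery beyond \defref{definition other external tangent space}.

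First I would verify the standard fact that $D$ annihilates every constant function. Applying the Leibniz rule to the constant function $1\colon X\to\mathbb{R}$ gives $D(1)=D(1\cdot 1)=1\cdot D(1)+1\cdot D(1)=2D(1)$, hence $D(1)=0$. By linearity of $D$, it follows that $D(c)=0$ for the constant function with value $c\in\mathbb{R}$; in particular $D(f(x))=0$, where $f(x)$ is interpreted as the constant map on $X$.

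Next, I would expand using the hypothesis $f = f(x) + gh$ and the linearity of $D$:
\[
D(f) = D(f(x)) + D(gh).
\]
The first summand vanishes by the previous step. For the second, the Leibniz rule yields
\[
D(gh) = g(x)D(h) + h(x)D(g),
\]
and both terms vanish because $g(x)=h(x)=0$ by assumption. Combining these, $D(f)=0$, as required.

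There is essentially no obstacle: the statement is a direct formal consequence of the Leibniz rule and linearity, with the vanishing on constants being the only non-tautological preliminary. The only care needed is to keep clear the distinction between the value $f(x)\in\mathbb{R}$ and the constant function $X\to\mathbb{R}$ that it denotes, which the statement already makes explicit.
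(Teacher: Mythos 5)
Your proposal is correct and follows exactly the same route as the paper: establish $D(1)=0$ from the Leibniz rule, deduce $D$ vanishes on constants by linearity, then expand $D(f)=D(f(x))+D(gh)$ and kill the second term using $g(x)=h(x)=0$. No differences worth noting.
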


\begin{proof}
By the linearity and Leibniz rule, we have 
\[
  D(f)=D(f(x))+D(gh)=g(x)D(h)+h(x)D(g)=0.
\]
Indeed, for the constant map $1\colon X\to \mathbb{R};y\mapsto 1$, we have $D(1)=1(x)D(1)+1(x)D(1)=2D(1)$. Therefore, it follows that $D(f(x))=f(x)D(1)=0$ by the linearity of $D$.
\end{proof}

From here, we denote the natural projection $\mathbb{R}\to\mathbb{R}/(0,1)$ by $\pi$. 
Also, for a smooth map $f\colon \mathbb{R}/(0,1)\to \mathbb{R}$, we denote the smooth map $f\circ \pi$ by $\hat{f}$. 

\begin{ex} \label{example global right tangent space R/open interval}
  Take an element $D\in \globalrighttangent{[1/2]}{\mathbb{R}/(0,1)}$ and a smooth map $f\in\smoothmap{\mathbb{R}/(0,1)}{\mathbb{R}}$. 
  If $\hat{f}\in\smoothmap{\mathbb{R}}{\mathbb{R}}$ is in the linear span (we denote this linear span by $S$) of the set 
  \[
    \{gh+a1\in\smoothmap{\mathbb{R}}{\mathbb{R}}\mid \text{$g$,$h\in \smoothmap{\mathbb{R}}{\mathbb{R}}$, $g((0,1))=h((0,1))=\{0\}$, and $a\in \mathbb{R}$.}\}\subset \smoothmap{\mathbb{R}}{\mathbb{R}},
  \]
  then $D(f)=0$ holds by the universality of the quotient diffeology and \lemref{lemma splitting square root global right tangent space}. 
  In particular, if $\hat{f}\in\smoothmap{\mathbb{R}}{\mathbb{R}}$ is a non-negative function and its square root is also smooth, then $D(f)=0$ holds. 
  However, even if in this case, it is known that there is a smooth function $\hat{f}$ whose square root is not smooth (Bony-Broglia-Colombini-Pernazza \nonnegativeroot).
  This is such a function:
  \[
    \hat{f}(t+1)=\begin{cases}
      e^{-1/t}\left(\sin^2(\pi/t)+e^{-1/t^2}\right)& (t\geq 0)\\
      0 & (t\leq 0)
    \end{cases}.
  \]
  Here, we horizontally translated the function in \nonnegativeroot, Section 2 by 1 to adjust it to our settings. 
  Therefore, it is difficult to prove that any $\hat{f}\in\smoothmap{\mathbb{R}}{\mathbb{R}}$ is in $S$. 
  On the other hand, for any $\varepsilon>0$, there is a smooth bump function $\lambda_{\varepsilon}:\mathbb{R}\to\mathbb{R}$ such that $\lambda_{\varepsilon}((0,1))=\{1\}$ and 
  $\lambda_{\varepsilon}(t)=0$ for any $t\leq -\varepsilon$ and $t\geq 1+\varepsilon$. 
  This indicates that for any $f\in\smoothmap{\mathbb{R}/(0,1)}{\mathbb{R}}$, if $\hat{f}$ is equal to some element of $S$ 
  only within some interval $(-\varepsilon,1+\varepsilon)$, then $D(f)=0$ holds. 
  I expect that almost all elements $f\in\smoothmap{\mathbb{R}/(0,1)}{\mathbb{R}}$ satisfy this condition. This is why I suspect that $\globalrighttangent{[1/2]}{\mathbb{R}/(0,1)}\cong 0$ holds.
\end{ex}

\begin{ex} \label{example manifold external tangent}
  Let $M$ be a manifold, and let $x\in M$. Then the right tangent space is isomorphic to the standard tangent space 
  because $(M,x)$ is isomorphic to $(U,0)$ as an object of the category $\dflgloc$, where $U$ is a local chart of $M$ at $x$, and $\rightfunctor$ is a strict extension of the tangent functor from $\enclzeroloc$. 
  Moreover, since all elements of $\righttangent{x}{M}$ are in the image of the natural transformation $\nattrans_{(M,x)}$, the external tangent space is also isomorphic to the standard tangent space (\proref{proposition image of internal is external}). 
  Since $M$ is a smoothly regular diffeological space, the global right or external tangent spaces are also the same:  
  \[
    \righttangent{x}{M}=\external{x}{M}\cong\globalrighttangent{x}{M}=\globalexternal{x}{M}\cong T_x(M).
  \]
\end{ex}

\begin{lemma} \label{lemma locally smooth function globalexternal}
  Let $X$ be a diffeological space, and let $x\in X$. If any smooth function $X\to \mathbb{R}$ is locally constant for the $D$-topology of $X$, then there is an isomorphism of vector spaces: 
  \[
    \globalrighttangent{x}{X}=\globalexternal{x}{X}\cong 0.
  \]
\end{lemma}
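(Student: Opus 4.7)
The plan is to reduce everything to showing $\globalrighttangent{x}{X}=0$, since $\globalexternal{x}{X}$ is a subspace of $\globalrighttangent{x}{X}$ and therefore vanishes automatically. Fix $D\in\globalrighttangent{x}{X}$ and any $f\in\smoothmap{X}{\mathbb{R}}$; I will show $D(f)=0$. A standard application of the Leibniz rule to $1=1\cdot 1$ forces $D(1)=2D(1)$, whence $D(1)=0$, and by linearity $D$ annihilates every constant function. So it suffices to handle $g:=f-f(x)$, which is still smooth and locally constant and satisfies $g(x)=0$.

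The key step is to factor $g$ as a product of two smooth functions that both vanish at $x$. Let $V=\{y\in X\mid g(y)\neq 0\}$. Because $g$ is locally constant in the $D$-topology, every point $y\in V$ admits a $D$-open neighborhood on which $g$ is a fixed nonzero constant, while every point of $X\setminus V$ admits a $D$-open neighborhood on which $g\equiv 0$; hence both $V$ and $X\setminus V$ are $D$-open, and $x\in X\setminus V$. I claim the characteristic function $\chi_V\colon X\to\mathbb{R}$ is smooth. Indeed, for any plot $p\colon\plotdom{p}\to X$, smoothness of $p$ implies continuity with respect to the $D$-topology, so $p^{-1}(V)$ and $p^{-1}(X\setminus V)$ partition $\plotdom{p}$ into two open subsets on which $\chi_V\circ p$ is identically $1$ and $0$ respectively; hence $\chi_V\circ p$ is locally constant, hence smooth.

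Since $g=g\cdot\chi_V$ pointwise on $X$ and both factors vanish at $x$, the Leibniz rule gives
\[
  D(g)=g(x)D(\chi_V)+\chi_V(x)D(g)=0+0=0,
\]
and combined with $D(f-g)=D(f(x))=0$ this yields $D(f)=0$. Since $f$ was arbitrary, $D=0$.

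The only genuine step beyond bookkeeping is the verification that $\chi_V$ is smooth, and this is exactly where the hypothesis is used: local constancy of every smooth function forces $\{g\neq 0\}$ to be $D$-clopen, turning the otherwise useless identity $g=g\cdot\chi_V$ into a Leibniz-applicable factorization by two functions vanishing at $x$.
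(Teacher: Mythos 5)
Your proof is correct, and it reaches the conclusion by a different product decomposition than the paper. The paper works with $\chi_U$, the characteristic function of the $D$-connected component $U$ of $x$: applying the Leibniz rule to $\chi_U=\chi_U\cdot\chi_U$ with $\chi_U(x)=1$ gives $D(\chi_U)=0$, and then $D(f)=D(f\chi_U)=D(f(x)\chi_U)=f(x)D(\chi_U)=0$, the middle step using that a locally constant $f$ is constant on the whole component $U$, so $f\chi_U=f(x)\chi_U$. You instead center $f$ at $x$, set $g=f-f(x)$, and factor $g=g\cdot\chi_V$ with $V=\{g\neq 0\}$, so that both factors vanish at $x$ and a single application of the Leibniz rule kills $D(g)$ outright. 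Both arguments rest on the same two ingredients --- characteristic functions of $D$-clopen sets are smooth because their composites with plots are locally constant, plus the Leibniz rule --- but your decomposition avoids connected components altogether; in particular it does not implicitly depend on the $D$-topology being locally connected, which is what makes the component $U$ open (hence $\chi_U$ locally constant) in the paper's version and which the paper leaves unremarked. That is a small gain in robustness. The paper's route, in exchange, isolates the reusable identity $D(\chi_U)=0$ and displays more explicitly where the hypothesis enters, namely through $f\chi_U=f(x)\chi_U$. Your verification that $V$ and $X\setminus V$ are both $D$-open and that $\chi_V\circ p$ is locally constant on $\plotdom{p}$ for every plot $p$ is exactly the point that needs care, and you handle it correctly.
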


\begin{proof}
  Let $D\in\globalrighttangent{x}{X}$, and let $\chi_U\colon X\to \mathbb{R}$ be a characteristic function where $U\subset X$ is a $D$-connected component of $x$ (since $\chi_U$ is locally constant, $\chi_U$ is smooth).
  Because of the Leibniz rule, we have
  \[
    D(\chi_U)=\chi_U(x)D(\chi_U)+\chi_U(x)D(\chi_U).
  \] 
  Since $x\in U$, the right side of this equality is equal to $2D(\chi_U)$ and this indicates that $D(\chi_U)=0$. 
  Next, let $f\colon X\to \mathbb{R}$ be a smooth function. By the Leibniz rule again, we have
  \[
    D(f\chi_U)=f(x)D(\chi_U)+\chi_U(x)D(f)=f(x)\cdot 0+1\cdot D(f)=D(f).
  \]
  However, since $f\chi_U=f(x)\chi_U$ holds, 
  we have $D(f\chi_U)=0$. 
  Therefore, $D(f)=0$, and this means that $D=0$. 
\end{proof}

A similar lemma holds for the standard right tangent space and the external tangent space. The next lemma is used in \CW, Example 3.15.

\begin{lemma}[\CW, Example 3.15] \label{lemma locally smooth function external}
  Let $X$ be a diffeological space, and let $x\in X$. 
  If any smooth function $A\to \mathbb{R}$ is locally constant for the $D$-topology of $X$ for any $D$-open set $A$ of $X$, then there is an isomorphism of vector spaces: 
  \[
    \righttangent{x}{X}=\external{x}{X}\cong 0.
  \]
\end{lemma}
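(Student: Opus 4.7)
The plan is to adapt the argument of \lemref{lemma locally smooth function globalexternal} to germs rather than global functions. Since $\external{x}{X}\subset \righttangent{x}{X}$ holds by \defref{definition external}, it suffices to prove $\righttangent{x}{X}\cong 0$.

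First, I would show that any right tangent vector $D\in \righttangent{x}{X}$ kills germs of constant functions. For any $D$-open neighborhood $V$ of $x$, let $c_1\colon V\to \mathbb{R}$ be the constant function $1$. Since $[c_1,V]\cdot [c_1,V]=[c_1,V]$ in $\germ(X,x)$, the Leibniz rule gives
\[
  D([c_1,V])=c_1(x)D([c_1,V])+c_1(x)D([c_1,V])=2D([c_1,V]),
\]
so $D([c_1,V])=0$. By linearity, $D([c_a,V])=aD([c_1,V])=0$ for the constant germ $c_a$ of any value $a\in\mathbb{R}$.

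Next, I would reduce an arbitrary germ to a constant germ. Let $[g,\coplotdom{g}]\in\germ(X,x)$, so $g\colon\coplotdom{g}\to\mathbb{R}$ is smooth and $\coplotdom{g}$ is a $D$-open set of $X$. By hypothesis, $g$ is locally constant for the $D$-topology of $X$, hence there exists a $D$-open neighborhood $V\subset \coplotdom{g}$ of $x$ with ${g|}_V=c_{g(x)}$. By the definition of germs, $[g,\coplotdom{g}]=[c_{g(x)},V]$, and therefore $D([g,\coplotdom{g}])=D([c_{g(x)},V])=0$ by the previous step.

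Since $D$ vanishes on every element of $\germ(X,x)$, we conclude $D=0$, so $\righttangent{x}{X}\cong 0$ and hence $\external{x}{X}\cong 0$ as well. There is essentially no obstacle here; the only subtle point is ensuring that the Leibniz computation takes place in the correct germ algebra, which is handled by choosing $V\subset \coplotdom{g}$ inside the domain on which $g$ is constant so that the germ equality $[g,\coplotdom{g}]=[c_{g(x)},V]$ is literally an instance of the equivalence relation in \defref{definition local map}.
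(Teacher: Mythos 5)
Your proof is correct. It is worth noting, though, that it is not quite the argument the paper intends: the paper's proof of this lemma is literally ``the same as \lemref{lemma locally smooth function globalexternal}'', i.e.\ the characteristic-function argument --- one takes $\chi_U$ for $U$ the $D$-connected component of $x$ (in the relevant $D$-open set), kills it via the Leibniz idempotent computation $D(\chi_U)=2D(\chi_U)$, and then uses the identity $f\chi_U=f(x)\chi_U$ (valid because $f$ is locally constant, hence constant on $U$) together with a second application of the Leibniz rule to conclude $D(f)=D(f\chi_U)=0$. You share the same core step --- Leibniz applied to an idempotent germ taking the value $1$ at $x$ --- but you replace the multiplication-by-$\chi_U$ reduction with the observation that, in the germ algebra, a locally constant function \emph{is} a constant germ: $[g,\coplotdom{g}]=[c_{g(x)},V]$ once you shrink to a $D$-open $V$ on which $g$ is constant. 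This is a genuine (if small) simplification available only in the germ setting: it needs no connected component, no characteristic function, and only one application of the Leibniz rule, at the cost of not transferring to the global version where no such restriction is possible. Both routes are valid; yours is arguably the more natural one for $\righttangent{x}{X}$ and $\external{x}{X}$.
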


\begin{proof}
  The proof of this lemma is the same as the previous \lemref{lemma locally smooth function globalexternal}. 
\end{proof}

\begin{ex}[\CW, Example 3.14] \label{example max/min external tangent}
  Let $X$ be a set, and let $x\in X$. We denote the diffeological space $(X,\mathscr{D}_X^{max})$ by $X_{max}$ and $(X,\mathscr{D}_X^{min})$ by $X_{min}$
  ($\mathscr{D}_X^{max}$ and $\mathscr{D}_X^{min}$ were defined in \exref{example definition max/min diffeology}). 
  Since the $D$-topology and the $I$-topology of $X_{max}$ are both indiscrete, $X_{max}$ is smoothly regular. Also, since the $D$-topology and the $I$-topology of $X_{min}$ are both discrete, $X_{min}$ is smoothly regular. 
  Therefore, we have $\globalrighttangent{x}{X_{max}}\cong \righttangent{x}{X_{max}}$ and $\globalrighttangent{x}{X_{min}}\cong \righttangent{x}{X_{min}}$. 
  Moreover, from the previous \lemref{lemma locally smooth function globalexternal}, there is an isomorphism of vector spaces:
  \[
    \righttangent{x}{X_{max}}=\external{x}{X_{max}}\cong 0,\;\righttangent{x}{X_{min}}=\external{x}{X_{min}}\cong 0. 
  \]
  Therefore, all variants of tangent spaces of $X_{max}$ and $X_{min}$ are isomorphic to zero.
\end{ex}

\begin{ex} \label{example topological space external tangent}
  Let $T$ be a topological space, and let $x\in T$. We think of the diffeological space $\conti(T)$ defined in \exref{example definition topological space diffeology}. 
  As proved in \CW, Example 3.15, it is known that for any $D$-open set $A\subset X$, all smooth functions $A\to \mathbb{R}$ are locally constant. 
  Therefore, there is an isomorphism of vector spaces (\lemref{lemma locally smooth function globalexternal}, \lemref{lemma locally smooth function external}):
  \[
    \righttangent{x}{\conti(T)}=\external{x}{\conti(T)}\cong\globalrighttangent{x}{\conti(T)}=\globalexternal{x}{\conti(T)}\cong 0.
  \]
\end{ex}

\begin{remark}{}{}
  For almost all topological spaces $(T,\mathscr{O}_T)$, $\conti(T)$ is not smoothly regular. We see this here.
  First, $\mathscr{O}_T\subset \mathscr{O}_{\conti(T)}$ where we denote the $D$-topology of $\conti(T)$ by $\mathscr{O}_{\conti(T)}$. 
  Indeed, for any $U\in\mathscr{O}_T$ and any plot $p\colon \plotdom{p}\to \conti(T)$, $p^{-1}(U)$ is an open set because $p$ is continuous, so $U$ is a $D$-open set of $\conti(T)$. 
  However, it is known that any smooth function $f\colon \conti(T)\to \mathbb{R}$ is locally constant (\CW, Example 3.15) so the connected sets are the only constituent of the $I$-topology of $\conti(T)$. 
  Therefore, the $D$-topology and the $I$-topology of $\conti(T)$ do not coincide if there is at least one open set that is not connected, this holds even if $T=\mathbb{R}$.
\end{remark}

The next example can be proved in the same manner as \exref{example cross right tangent} by using the global tangent version of \corref{corollary calculation of right tangent}.
However, we give some other ways of calculation by using properties of the global right tangent space.

\begin{ex} \label{example crossquot global external tangent}
  The diffeological space $\crossquot$ defined in \exref{example definition cross_quot generating family} is smoothly regular. 
  In particular, by \thref{theorem global right tangent space of regular space} and \thref{theorem global external tangent space of regular space}, there is an isomorphism of vector spaces, 
  \[
    \globalrighttangent{x}{\crossquot}=\globalexternal{x}{\crossquot}\cong
    \begin{cases}
      \mathbb{R}^2 & (x=(0,0))\\
      \mathbb{R} & (x\neq (0,0)).
    \end{cases}
  \]
\end{ex}

\begin{proof}
It is enough to show that $\crossquot$ is smoothly regular because we have already proved that
\[
  \righttangent{x}{\crossquot}=\external{x}{\crossquot}\cong
  \begin{cases}
    \mathbb{R}^2 & (x=(0,0))\\
    \mathbb{R} & (x\neq (0,0))
  \end{cases}
\]
in \exref{example cross right tangent}. We show that the $D$-topology and the $I$-topology of $\crossquot$ are the same. 
Let $U$ be a $D$-open set of $\crossquot$. The $D$-topology of $\crossquot$ is the quotient topology from $\mathbb{R}_{left}\amalg \mathbb{R}_{right}$ because of \IZ, 2.12.
Therefore, it is clear that $U$ is also an $I$-open set of $\crossquot$. This completes the proof.
\end{proof}

\begin{ex} \label{example crosssub global external tangent}
  There is an isomorphism of vector spaces, where $\crosssub$ is defined in \exref{example crosssub internal tangent}:
  \[
    \globalrighttangent{x}{\crosssub}=\globalexternal{x}{\crosssub}\cong
    \begin{cases}
      \mathbb{R}^2 & (x=(0,0))\\
      \mathbb{R} & (x\neq (0,0)).
    \end{cases}
  \]
\end{ex}

\begin{proof}
In \BKW, Example 5.1, it is proved that $\Pi \Phi(\crossquot)=\crosssub$. 
Since the equality $\Phi\Pi \Phi(\crossquot)=\Phi(\crossquot)$ holds (\proref{proposition reflexive stability}), we have $\Phi(\crossquot)=\Phi(\crosssub)$. 
Therefore, by \proref{proposition global right is differential invariant}, we have $\globalrighttangent{x}{\crosssub}\cong \globalrighttangent{x}{\crossquot}$. 
It also holds that $\globalexternal{x}{\crosssub}=\globalrighttangent{x}{\crosssub}$ 
because generators of $\globalrighttangent{x}{\crosssub}$ are in the image of the natural transformation $\nattransglobal_{(\crosssub,x)}$. 
\end{proof}

\begin{ex}[\CW, Example 3.24] \label{example orthogonal action quotient external tangent}
  We equip $\mathbb{R}^n/O(n)$ with the quotient diffeology from $\mathbb{R}^n$. 
  Let $0$ be the image of $0\in\mathbb{R}^n$ by the natural projection $\mathbb{R}^n\to \mathbb{R}^n/O(n)$.
  According to \CW, Example 3.24, there is an isomorphism of vector spaces:
  \[
    \righttangent{0}{\mathbb{R}^n/O(n)}=\external{0}{\mathbb{R}^n/O(n)}\cong \mathbb{R}.
  \]
  Moreover, $\mathbb{R}^n/O(n)$ is smoothly regular at $0$. Therefore, there is an isomorphism of vector spaces:
  \[
    \globalrighttangent{0}{\mathbb{R}^n/O(n)}=\globalexternal{0}{\mathbb{R}^n/O(n)}\cong \mathbb{R}.
  \]
\end{ex}

\begin{proof}
Let $U$ be a $D$-open neighborhood of $0$. $U$ is an open set of the quotient topology from $\mathbb{R}^n$ because of \IZ, 2.12.
Therefore, there is some $\varepsilon >0$ such that $[0,\varepsilon)\subset U$. Since $[0,\varepsilon)$ is an $I$-open neighborhood of $0$, it shows that $\mathbb{R}^n/O(n)$ is smoothly regular at $0$. 
\end{proof}

\begin{ex}[\CW, Example 3.23] \label{example irrational torus external tangent}
  Let $\theta\in\mathbb{R}\backslash\mathbb{Q}$, and let $\irrationaltorus$ be the irrational torus of slope $\theta$ defined in \exref{example irrational torus internal tangent}.
  Then for each $x\in \irrationaltorus$, there is an isomorphism of vector spaces:
  \[
    \globalrighttangent{x}{\irrationaltorus}=\globalexternal{x}{\irrationaltorus}\cong \righttangent{x}{\irrationaltorus}=\external{x}{\irrationaltorus}\cong 0.
  \]
  Moreover, $\irrationaltorus$ is smoothly regular.
\end{ex}

\begin{proof}
Since the $D$-topology of $\irrationaltorus$ is the indiscrete topology and any smooth function $\irrationaltorus\to\mathbb{R}$ is a constant function, 
the isomorphism is proved by \lemref{lemma locally smooth function globalexternal} and \lemref{lemma locally smooth function external}. 
Since the $D$-topology of $\irrationaltorus$ is the indiscrete topology, it is straightforward to see that $\irrationaltorus$ is smoothly regular at every point. 
\end{proof}

\begin{ex}[\CW, Example 3.22] \label{example wire diffeology external tangent}
  Let $\mathbb{R}^n_{wire}$ be the diffeological space $(\mathbb{R}^n, \mathscr{D}_{wire})$ defined in \exref{example definition wire diffeology}, and let $x\in \mathbb{R}_{wire}$. 
  According to \CW, Example 3.22, there is an isomorphism of vector spaces 
  (the external tangent space is determined only by 1-plots as in \CW, Proposition 3.13. The same proof is also applicable for the right tangent space):
  \[
    \righttangent{x}{\mathbb{R}_{wire}}=\external{x}{\mathbb{R}_{wire}}\cong \mathbb{R}^n.
  \]
  Moreover, $\mathbb{R}_{wire}$ is smoothly regular. 
  Then by \thref{theorem global right tangent space of regular space} and \thref{theorem global external tangent space of regular space}, there is an isomorphism of vector spaces:
  \[
    \globalrighttangent{x}{\mathbb{R}_{wire}}=\globalexternal{x}{\mathbb{R}_{wire}}\cong \mathbb{R}^n.
  \]
\end{ex}

\begin{proof}
We know that the $D$-topology of $\mathbb{R}^n_{wire}$ and the $D$-topology of $\mathbb{R}^n$ are the same because the $D$-topology is determined only by 1-plots (Christensen-Sinnamon-Wu \dtop, Theorem 3.7). 
Also, the differential spaces $\Phi(\mathbb{R}^n_{wire})$ and $\Phi(\mathbb{R}^n)$ are the same (\BKW, Example 2.8). 
From the above facts and the fact that $\mathbb{R}^n$ is smoothly regular, it follows that $\mathbb{R}^n_{wire}$ is also smoothly regular. 
\end{proof}

\begin{ex} \label{example extenal tangent horizontal quotient}
  Consider the pointwise smooth action of $\mathbb{R}$ on $\mathbb{R}^2$ given by $t\cdot (x,y)=(x+ty,y)$ for each $t\in \mathbb{R}$. 
  We write $X$ for the quotient diffeological space of $\mathbb{R}^2$ by this action (this is defined in \exref{example internal tangent horizontal quotient}). 
  Then $X$ is not smoothly regular.
  However, there is an isomorphism of vector spaces:
  \[
    \righttangent{[(0,0)]}{X}=\external{[(0,0)]}{X}\cong \mathbb{R},\; \globalrighttangent{[(0,0)]}{X}=\globalexternal{[(0,0)]}{X}\cong \mathbb{R}.
  \]
\end{ex}

\begin{proof}
  By \IZ, 2.12, 
  the set $\{[(x,0)]\in X\mid \text{$x\leq -1$ or $x\geq 1$}\}$ is a $D$-closed set. 
  Therefore, $U=X\backslash\{[(x,0)]\in X\mid \text{$x\leq -1$ or $x\geq 1$}\}$ is a $D$-open set.
  However, $U$ is not an $I$-open set. Indeed, any $I$-open set which contains the point $[(0,0)]$ also contains $[(2,0)]$ because for any smooth map $g\colon X\to \mathbb{R}$, $g([(0,0)])=g([(2,0)])$ holds. 
  Therefore, $X$ is not smoothly regular at $[(0,0)]$. 

  By the universality of the quotient diffeology, 
  there is an isomorphism of vector spaces:
  \begin{equation*}
    \begin{split}
    \smoothmap{X}{\mathbb{R}}&\cong \{f\colon \mathbb{R}^2\to \mathbb{R}\mid \text{$f$ is smooth and for any $(x, y)\in\mathbb{R}^2$ and $t\in\mathbb{R}$, $f(x,y)=f(x+ty,y)$}\}\\
    &\cong\{f\colon \mathbb{R}^2\to \mathbb{R}\mid \text{$f$ is smooth and for any $(x, y)\in\mathbb{R}^2$, $f(x,y)=f(0,y)$}\}\cong\smoothmap{Y}{\mathbb{R}},
    \end{split}
  \end{equation*}
  where $Y$ is the quotient diffeological space of $\mathbb{R}^2$ by the equivalence relation $\equivrel$ generated by $(x,y)\equivrel (0,y)$ for any $(x,y)\in\mathbb{R}$. 
  The second isomorphism holds because of the continuity of smooth maps in the set. 
  First, we prove that $\globalrighttangent{[(0,0)]}{X}\cong\mathbb{R}$. 
  By the above isomorphism, it is enough to show that $\globalrighttangent{[(0,0)]}{Y}\cong \mathbb{R}$. 
  However, since $Y$ is diffeomorphic to $\mathbb{R}$, we have $\globalrighttangent{[(0,0)]}{X}\cong\mathbb{R}$. 
  Moreover, one of nontrivial elements of $\globalrighttangent{x}{X}$ is $\nattransglobal_{(X,[(0,0)])}([p,\frac{d}{dx}])$, where $p\colon \mathbb{R}\to X;t\mapsto [(0,t)]$. 
  This is an element of $\globalexternal{[(0,0)]}{X}$ (\defref{definition natural transformation to global tangent space}). 
  Indeed, take a smooth map $g\colon X\to\mathbb{R};[(x,y)]\mapsto y$ (this is smooth because of the universality of the quotient diffeology), 
  then we have $g\circ p={\id}_{\mathbb{R}}$. This means that $\nattransglobal_{(X,[(0,0)])}([p,\frac{d}{dx}])$ is a nontrivial element of $\globalrighttangent{x}{X}$. 

  We also prove that $\righttangent{[(0,0)]}{X}\cong \mathbb{R}$. Take a smooth map $f\colon X\to \mathbb{R};[(x,y)]\mapsto y$ (this is smooth because of the universality of the quotient diffeology).
  It is enough to show that $f$ satisfies the conditions of \corref{corollary calculation of right tangent}. 
  Let $g\colon \coplotdom{g}\to \coplotim{g}$ be a smooth map from a $D$-open set $\coplotdom{g}$ of $X$ to a Euclidean open set $\coplotim{g}$. 
  Take a smooth map $h\colon\mathbb{R}\to\coplotim{g};t\mapsto g([0,t])$ (this is smooth because $g$ and the natural projection $\mathbb{R}^2\to X$ are smooth). 
  The equality $g=h\circ ({f|}_{\coplotdom{g}})$ holds because for any $[(x,0)]\in X$, $g([(x,y)])=g([(0,y)])$ holds. 
  Moreover, this $h$ is the unique map which satisfies $g=h\circ ({f|}_{\coplotdom{g}})$. Therefore, applying \corref{corollary calculation of right tangent}, we have $\righttangent{[(0,0)]}{X}\cong \mathbb{R}$.
  By the same argument as the case of the global tangent space, we obtain $\external{[(0,0)]}{X}\cong \mathbb{R}$.
\end{proof}

\begin{ex} \label{example right tangent not equal to global right tangent}
    Let $\equivrel$ be the equivalence relation on $\crosssub$ (defined in \exref{example crosssub internal tangent}) such that $s\equivrel t \Longleftrightarrow s=t$ or $s$ and $t$ are both elements of $\{(x,0)\in\crosssub\mid x\neq 0\}\subset\crosssub$. 
    We denote the quotient diffeological space $\crosssub/{\equivrel}$ by $X$. We denote the point represented by $(x,y)\in\crosssub$ by $[1,0]$ 
    Then, $X$ has the following properties:
    \begin{description}
        \item[1] $\righttangent{[1,0]}{X}=\external{[1,0]}{X}\cong 0$.
        \item[2] $X$ is not smoothly regular at $[1,0]$.
        \item[3] $\globalrighttangent{[1,0]}{X}=\globalexternal{[1,0]}{X}\cong \mathbb{R}$.
    \end{description}
\end{ex}

\begin{proof}
We first prove 1. By \IZ, 2.12, 
$\{[1,0]\}$ is a $D$-open set of $X$. 
Therefore, by using \corref{corollary subspace external tangent}, we have
\[
    \righttangent{[1,0]}{X}\cong\righttangent{[1,0]}{\{[1,0]\}}\cong 0.
\]
Since $\righttangent{[1,0]}{X}$ contains $\external{[1,0]}{X}$, we also have $\external{[1,0]}{X}\cong 0$. 

Next, we prove 3. By the universality of the quotient diffeology, 
there is an isomorphism of vector spaces:
\begin{equation*}
  \begin{split}
  \smoothmap{X}{\mathbb{R}}&\cong \{f\colon \crosssub\to \mathbb{R}\mid \text{$f$ is smooth and for any $x\in\mathbb{R}\backslash \{0\}$, $f(x,0)=f(1,0)$}\}\\
  &\cong\{f\colon \crosssub\to \mathbb{R}\mid \text{$f$ is smooth and for any $x\in\mathbb{R}$, $f(x,0)=f(1,0)$}\}\cong\smoothmap{Y}{\mathbb{R}},
  \end{split}
\end{equation*}
where $Y$ is the quotient diffeological space of $\crosssub$ by the equivalence relation $\equivrel$ generated by $(x,0)\equivrel (1,0)$ for any $x\in\mathbb{R}$. 
The second isomorphism holds because of the continuity of smooth maps in the set. 
Since $Y$ is diffeomorphic to $\mathbb{R}$, we have $\globalrighttangent{[1,0]}{X}\cong \globalrighttangent{0}{\mathbb{R}}\cong \mathbb{R}$. 
From this isomorphism, 
\[
    D\colon \smoothmap{X}{\mathbb{R}}\to\mathbb{R};f\mapsto \left.\frac{d}{dy}\right|_{y=0}f(0,y)
\]
is one of nontrivial elements in $\globalrighttangent{[1,0]}{X}$. This element is in $\globalexternal{[1,0]}{X}$ because of the \lemref{lemma differential operator is smooth}. 
Therefore, we have $\globalexternal{[1,0]}{X}\cong \mathbb{R}$. 
2 is proved by using 1, 3 and $\thref{theorem global right tangent space of regular space}$. 
\end{proof}

From here, we present some examples of which we have not been able to calculate the right tangent space or the external tangent space, 
but only we could calculate the Vincent's tangent space which is the image of the natural transformation from the internal tangent space (\remref{remark Vincent's tangent space}). 
We denote the image of $\nattrans_{(X,x)}\colon\internal{x}{X}\to \righttangent{x}{X}$ by $\vincenttangent{x}{X}$ 
and the image of $\nattransglobal_{(X,x)}\colon\internal{x}{X}\to \globalrighttangent{x}{X}$ by $\globalvincenttangent{x}{X}$. 

\begin{lemma} \label{lemma global Vincent tangent space of regular space}
  Let $X$ be a diffeological space, and let $x\in X$. If $X$ is smoothly regular at $x$, then there is an isomorphism of vector spaces:
  \[
    \globalvincenttangent{x}{X}\cong\vincenttangent{x}{X}.
  \]
  This isomorphism is induced from the isomorphism of \thref{theorem global right tangent space of regular space}.
\end{lemma}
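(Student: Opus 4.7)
The plan is to reduce the statement directly to \thref{theorem global right tangent space of regular space}. That theorem supplies a linear isomorphism
\[
  \alpha\colon\righttangent{x}{X}\longrightarrow\globalrighttangent{x}{X},\qquad \alpha(D)(g)=D([g,X]),
\]
with inverse $\beta$. Since $\vincenttangent{x}{X}=\image(\nattrans_{(X,x)})$ and $\globalvincenttangent{x}{X}=\image(\nattransglobal_{(X,x)})$, it suffices to show that $\alpha$ carries the first image onto the second, i.e.\ that the square
\[
  \xymatrix{
    \internal{x}{X} \ar[r]^{\nattrans_{(X,x)}} \ar@{=}[d] & \righttangent{x}{X} \ar[d]^{\alpha}\\
    \internal{x}{X} \ar[r]^{\nattransglobal_{(X,x)}} & \globalrighttangent{x}{X}
  }
\]
commutes. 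Once commutativity is verified, $\alpha(\vincenttangent{x}{X})=\alpha(\image(\nattrans_{(X,x)}))=\image(\nattransglobal_{(X,x)})=\globalvincenttangent{x}{X}$, and since $\alpha$ is injective, its restriction to $\vincenttangent{x}{X}$ is the desired isomorphism.

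The commutativity of the square is just unwinding the definitions. Fix a generator $[p,v]\in\internal{x}{X}$, with $p\colon\plotdom{p}\to X$ a plot and $v\in T_0\plotdom{p}$, and an arbitrary $g\in\smoothmap{X}{\mathbb{R}}$. By \defref{definition natural transformation}, $\nattrans_{(X,x)}([p,v])([g,X])$ equals $v$ applied (as an element of $\righttangent{0}{\plotdom{p}}\cong T_0\plotdom{p}$) to the germ of $g\circ p$ at $0$, which is the directional derivative $v(g\circ p)$. On the other hand, by \defref{definition natural transformation to global tangent space}, $\nattransglobal_{(X,x)}([p,v])(g)$ is (via the identification $\globalrighttangent{g(x)}{\mathbb{R}}\cong T_{g(x)}\mathbb{R}$) precisely $v(g\circ p)$ as well. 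Thus $\alpha(\nattrans_{(X,x)}([p,v]))(g)=\nattransglobal_{(X,x)}([p,v])(g)$ for every $g$, which gives $\alpha\circ\nattrans_{(X,x)}=\nattransglobal_{(X,x)}$.

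No genuine obstacle arises here, because smooth regularity at $x$ is only used implicitly, through the fact that $\alpha$ and $\beta$ are mutually inverse linear isomorphisms on all of $\righttangent{x}{X}$ and $\globalrighttangent{x}{X}$ in the regular case. Once that is granted, the argument is purely categorical: a bijection between two vector spaces restricts to a bijection between subspaces that are images of a common linear map under the two projections making the square commute.
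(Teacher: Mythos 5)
Your proof is correct and follows essentially the same route as the paper: both arguments reduce the lemma to the compatibility of $\nattrans_{(X,x)}$ and $\nattransglobal_{(X,x)}$ with the isomorphism $\alpha$ of \thref{theorem global right tangent space of regular space}, verified by unwinding the definitions on generators $[p,v]$. The only difference is that the paper also checks $\beta\circ\nattransglobal_{(X,x)}=\nattrans_{(X,x)}$ directly via the extension lemma, whereas you correctly note that this second identity is a formal consequence of $\alpha\circ\nattrans_{(X,x)}=\nattransglobal_{(X,x)}$ together with $\beta\circ\alpha=\id$, so verifying the single relation suffices.
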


\begin{proof}
  It is enough to show that $\alpha\circ\nattrans_{(X,x)}=\nattransglobal_{(X,x)}$ and $\beta\circ\nattransglobal_{(X,x)}=\nattrans_{(X,x)}$, 
  where $\alpha\colon\righttangent{x}{X}\to\globalrighttangent{x}{X}$ and $\beta\colon\globalrighttangent{x}{X}\to\righttangent{x}{X}$ are the maps defined in \thref{theorem global right tangent space of regular space}. 
  First, let $[p,v]\in\internal{x}{X}$, where $p\colon \plotdom{p}\to X$ is a plot of $X$ and $v\in T_0 (\plotdom{p})$. 
  For any $g\in\smoothmap{X}{\mathbb{R}}$, the following equality holds:
  \begin{equation*}
    \begin{split}
    &(\alpha\circ\nattrans_{(X,x)}([p,v]))(g)=(\nattrans_{(X,x)}([p,v]))([g,X])\\
    &=(T(g\circ {p|}_{p^{-1}(X)})v)[{\id}_{\mathbb{R}},\mathbb{R}]=(T(g\circ p)v)[{\id}_{\mathbb{R}},\mathbb{R}]=(\nattransglobal_{(X,x)}([p,v]))(g).
    \end{split}
  \end{equation*}
  Therefore, it follows that $\alpha\circ\nattrans_{(X,x)}=\nattransglobal_{(X,x)}$. 

  Second, let $[p,v]\in\internal{x}{X}$, where $p\colon \plotdom{p}\to X$ is a plot of $X$ and $v\in T_0 (\plotdom{p})$. 
  For any $g\in\smoothmap{\coplotdom{g}}{\mathbb{R}}$, where $\coplotdom{g}$ is a $D$-open set of $X$, take a smooth map $\tilde{g}$ and a $D$-open neighborhood $V$ of $x$ in \lemref{lemma existence of 0 extension dflg space}. 
  Then the following equality holds:
  \begin{equation*}
    \begin{split}
    &(\beta\circ\nattransglobal_{(X,x)}([p,v]))([g,\coplotdom{g}])=(\nattransglobal_{(X,x)}([p,v]))(\tilde{g})=(T(\tilde{g}\circ p)v)[{\id}_{\mathbb{R}},\mathbb{R}]\\
    &=(T(g\circ {p|}_{p^{-1}(V)})v)[{\id}_{\mathbb{R}},\mathbb{R}]=(\nattrans_{(X,x)}([p,v]))([{g|}_V,V])=(\nattrans_{(X,x)}([p,v]))([g,\coplotdom{g}]).
    \end{split}
  \end{equation*}
  Therefore, it follows that $\beta\circ\nattransglobal_{(X,x)}=\nattrans_{(X,x)}$ and this is what we wanted to prove.
\end{proof}

\begin{ex} \label{example n line gluing external tangent}
  Let $Y_{A}=(\coprod_{\alpha\in A}\mathbb{R}_{\alpha})/{\equivrel}$ be the diffeological space defined in \exref{example n line gluing internal tangent}. 
  Then $Y_A$ is smoothly regular (this is straightforward). 
  Also, there is an isomorphism of vector spaces:
  \[
    \vincenttangent{[0]}{Y_{A}}\cong\globalvincenttangent{[0]}{Y_A}\cong\mathbb{R}.
  \]
\end{ex}

\begin{proof}
We have already proved that $\internal{[0]}{Y_A}\cong \bigoplus_{\alpha\in A}\mathbb{R}$ 
and $\internal{[0]}{Y_A}$ is generated by $\{[i_{\alpha},\frac{d}{dx}]\}_{\alpha\in A}$ where $i_{\alpha}\colon\mathbb{R}_{\alpha}\to Y_A$ is the natural inclusion in \exref{example n line gluing internal tangent}. 
Therefore, it is enough to show that the subspace of $\globalrighttangent{[0]}{Y_A}$ generated by $\{\nattransglobal_{(Y_A,[0])}([i_{\alpha},\frac{d}{dx}])\}_{\alpha\in A}$ is $1$-dimensional. 
In particular, we show that for any $\alpha$, $\beta\in A$, the equality $\nattransglobal_{(Y_A,[0])}([i_{\alpha},\frac{d}{dx}])=\nattransglobal_{(Y_A,[0])}\beta([i_{\beta},\frac{d}{dx}])$ holds. 
For any $g\in\smoothmap{Y_A}{\mathbb{R}}$, and any $\alpha\in A$, 
the following equality holds:
\[
  \left(\nattransglobal_{(Y_A,[0])}\left(\left[i_{\alpha},\frac{d}{dx}\right]\right)\right)(g)=\left(T\left(g\circ i_{\alpha}\right)\frac{d}{dx}\right)[{\id}_{\mathbb{R}},\mathbb{R}]=\left.\frac{d}{dx}\right|_{x=0}g\circ i_{\alpha}.
\]
However, for any $x\leq 0$ and $\alpha$, $\beta\in A$, $g\circ i_{\alpha}(x)=g\circ i_{\beta}(x)$ holds because of the equality $i_{\alpha}(x)=i_{\beta}(x)$ for $x\leq 0$. 
Therefore, we have the next equality:
\[
  \left.\frac{d}{dx}\right|_{x=0}g\circ i_{\alpha}=\left.\frac{d}{dx}\right|_{x=0}g\circ i_{\beta}.
\]
This means that for any $\alpha$, $\beta\in A$, $\nattransglobal_{(Y_A,[0])}([i_{\alpha},\frac{d}{dx}])=\nattransglobal_{(Y_A,[0])}\beta([i_{\beta},\frac{d}{dx}])$. 
\end{proof}
  I have not known whether $\righttangent{[0]}{Y_A}$ has some elements other than those of \exref{example n line gluing external tangent} or not.

\begin{ex} \label{example external tangent eigenvalue>,<1}
  Consider the pointwise smooth action of $\mathbb{R}$ on $\mathbb{R}^2$ given by $t\cdot (x,y)=(2^tx, 2^{-t}y)$ for each $t\in \mathbb{R}$. 
  We write $X$ for the quotient diffeological space of $\mathbb{R}^2$ by this action. 
  The right tangent space of $X$ is isomorphic to the right tangent space of $Y$, where $Y$ is a quotient diffeological space of $\mathbb{R}^2$ by the equivalence relation $\equivrel$ 
  generated by the relation induced from the action $\varphi$ and the relation $(s,0)\equivrel (0,0)$, $(0,s)\equivrel (0,0)$ for each $s\in \mathbb{R}$ (the same reason as \exref{example extenal tangent horizontal quotient}). 
  Also, let $\cross$ be the set $\cross=\{(x,y)\in\mathbb{R}\mid \text{$x=0$ or $y=0$}\}$. 
  Take four maps $\mathbb{R}\to \cross$ as follows: $\alpha_1(t)=\begin{cases}
    (t,0) & (t\leq 0)\\
    (0,t) & (t\geq 0)
  \end{cases}$, $\alpha_2(t)=\begin{cases}
    (-t,0) & (t\leq 0)\\
    (0,t) & (t\geq 0)
  \end{cases}$, $\alpha_3(t)=\begin{cases}
    (-t,0) & (t\leq 0)\\
    (0,-t) & (t\geq 0)
  \end{cases}$, $\alpha_4(t)=\begin{cases}
    (t,0) & (t\leq 0)\\
    (0,-t) & (t\geq 0)
  \end{cases}$. We equip the set $\cross$ with the diffeology generated by $\{\alpha_1,\alpha_2,\alpha_3,\alpha_4\}$ and we denote this diffeological space by $Z$. 
  Then, $X$, $Y$ and $Z$ have the following properties:
  \begin{description}
    \item[1] $Y$ is diffeomorphic to $Z$. 
    \item[2] $Y$ and $Z$ are smoothly regular.
    \item[3] $\internal{(0,0)}{Z}\cong \mathbb{R}^4$.
    \item[4] $\globalvincenttangent{(0,0)}{Z}\cong \mathbb{R}$ (remark that $\globalvincenttangent{[(0,0)]}{X}\cong 0$ because we proved that $\internal{[(0,0)]}{X}\cong 0$ in \exref{example internal tangent eigenvalue>,<1}). 
    \item[5] $\globalrighttangent{(0,0)}{X}$ and $\globalexternal{(0,0)}{X}$ are at least $1$-dimensional.
  \end{description}
\end{ex}

\begin{proof}
  We first prove 1. We define four $D$-open sets of $Y$ as $U_1^{+}=\{[(x,y)]\mid x>0\}$, $U_1^{-}=\{[(x,y)]\mid x<0\}$, $U_2^{+}=\{[(x,y)]\mid y>0\}$, $U_2^{-}=\{[(x,y)]\mid y<0\}$, 
  which satisfies $Y=U_1^{+}\cup U_1^{-}\cup U_2^{+}\cup U_2^{-}$. 
  Take four smooth maps 
  \[
    \eta_1^{+}\colon U_1^{+}\to Z;[(x,y)]\mapsto\begin{cases}
      (xy,0) & (y\leq 0)\\
      (0,xy) & (y\geq 0)
    \end{cases},\; 
    \eta_2^{+}\colon U_2^{+}\to Z;[(x,y)]\mapsto\begin{cases}
      (-xy,0) & (x\leq 0)\\
      (0,xy) & (x\geq 0),
    \end{cases}
  \]
  \[
    \eta_1^{-}\colon U_1^{-}\to Z;[(x,y)]\mapsto\begin{cases}
      (0,-xy) & (y\leq 0)\\
      (-xy,0) & (y\geq 0)
    \end{cases},\; 
    \eta_2^{-}\colon U_2^{-}\to Z;[(x,y)]\mapsto\begin{cases}
      (0,-xy) & (x\leq 0)\\
      (xy,0) & (x\geq 0).
    \end{cases}
  \]
  By combining these smooth maps, we define $\eta\colon Y\to Z$ (these maps coincide within the intersections of their domains). 
  Moreover, take a smooth map 
  \[
    \gamma\colon Z\to Y;(x,y)\mapsto \begin{cases}
      [(1,x)] & (x\leq 0, y=0)\\
      [(1,y)] & (y\geq 0, x=0)\\
      [(-1,x)] & (x\geq 0, y=0)\\
      [(-1,y)] & (y\leq 0, x=0).
    \end{cases}
  \]
  This is smooth because for each $i\in\{1,2,3,4\}$, the composition $\gamma\circ\alpha_{i}$ is smooth. Indeed, 
  \[
    \gamma\circ \alpha_1(t)=[(1,t)]\text{ and } \gamma\circ \alpha_3(t)=[(-1,-t)]
  \]
  hold. Also, the next two equalities 
  \[
    \gamma\circ \alpha_2(t)
    =\begin{cases}
      [(-1,-t)] & (t\leq 0)\\
      [(1,t)] & (t\geq 0)
    \end{cases}=[(t,1)]\text{ and }
    \gamma\circ \alpha_4(t)
    =\begin{cases}
      [(1,t)] & (t\leq 0)\\
      [(-1,-t)] & (t\geq 0)
    \end{cases}=[(-t,-1)]
  \]
  hold because for any $t<0$, $[(-1,-t)]=[(2^{\log_2 (-t)}\cdot (-1),2^{-\log_2 (-t)}\cdot (-t))]=[(t,1)]$ and $[(1,t)]=[(2^{\log_2 (-t)}\cdot 1,2^{-\log_2 (-t)}\cdot t)]=[(-t,-1)]$ hold, 
  and for any $t>0$, $[(1,t)]=[(2^{\log_2 t}\cdot 1,2^{-\log_2 t}\cdot t)]=[(t,1)]$ and $[(-1,-t)]=[(2^{\log_2 t}\cdot (-1),2^{-\log_2 t}\cdot (-t))]=[(-t,-1)]$ hold. 
  Since it is straightforward to check that $\eta$ and $\gamma$ are inverses of each other, it follows that $Y$ and $Z$ are diffeomorphic.

  Next, we prove 2. Since $Y$ and $Z$ are diffeomorphic to each other (by 1), it is enough to show that $Z$ is smoothly regular. 
  Since $Z$ is also a quotient diffeological space of $\mathbb{R}\coprod\mathbb{R}\coprod\mathbb{R}\coprod\mathbb{R}$, 
  the $D$-open set of $Z$ is a set $U\subset Z$ such that $\pi^{-1}(U)$ is open,where $\pi\colon \mathbb{R}\coprod\mathbb{R}\coprod\mathbb{R}\coprod\mathbb{R}\to Z$ is the natural projection. 
  Therefore, it is clear to prove that $U$ is also an $I$-open set. Namely, $Z$ is smoothly regular. 

  From here, we prove 3. It is enough to show that $\{[\alpha_{i},\frac{d}{dx}]\}_{i=1}^4$ is a basis of $\internal{(0,0)}{Z}$. 
  We can prove this in the same manner as \exref{example n line gluing internal tangent}. 

  We prove 4. In the proof of 3, we have shown that $\{[\alpha_{i},\frac{d}{dx}]\}_{i=1}^4$ is a basis of $\internal{(0,0)}{Z}$. 
  Therefore, it is enough to show that the image of $[\alpha_{i},\frac{d}{dx}]$ by $\nattransglobal_{(Z,(0,0))}$ is the same element for each $i\in\{1,2,3,4\}$. 
  As in the proof of \exref{example n line gluing external tangent}, it is enough to show that for any $g\in \smoothmap{Z}{\mathbb{R}}$, 
  the value $\left.\frac{d}{dx}\right|_{t=0} g\circ \alpha_i$ is independent to the choice of $i\in\{1,2,3,4\}$. 
  Since $g\circ \alpha_1=g\circ \alpha_2$ for $t\geq 0$, we have $\left.\frac{d}{dx}\right|_{t=0} g\circ \alpha_1=\left.\frac{d}{dx}\right|_{t=0} g\circ \alpha_2$ (the same as the proof of \exref{example n line gluing external tangent}). 
  The values $\left.\frac{d}{dx}\right|_{t=0} g\circ \alpha_3$ and $\left.\frac{d}{dx}\right|_{t=0} g\circ \alpha_4$ are the same values by the same argument. 

  We prove 5. From 1 and 4, we have $\globalrighttangent{(0,0)}{Y}\supset \globalvincenttangent{(0,0)}{Y}\cong \mathbb{R}$. Therefore, $\globalrighttangent{(0,0)}{Y}$ is at least $1$-dimensional. 
  Also, by the same argument as \exref{example extenal tangent horizontal quotient}, we have $\globalrighttangent{(0,0)}{X}\cong \globalrighttangent{(0,0)}{Y}$. 
  Therefore, $\globalrighttangent{(0,0)}{Y}$ is also at least $1$-dimensional. 
\end{proof}

\begin{ex} \label{example crushing y-axis external tangent}
  Let $\equivrel$ be the equivalence relation on $\mathbb{R}$ such that $x\equivrel y\Longleftrightarrow x=y$ or $x$ and $y$ are both elements of $\{(x,y)\in \mathbb{R}^2\mid x=0\}$.
  We denote the quotient diffeological space $\mathbb{R}^2/\equivrel$ by $\mathbb{R}^2/\{x=0\}$. 
  Then $\globalvincenttangent{[(0,0)]}{\mathbb{R}^2/\{x=0\}}$ and $\vincenttangent{[(0,0)]}{\mathbb{R}^2/\{x=0\}}$ are uncountably-infinite dimensional.
\end{ex}

\begin{proof}
  Just like the proof of \exref{example crushing y-axis internal tangent}, 
  we can prove that all $\globalrightfunctor(p_t)(\frac{d}{dx})\in\globalrighttangent{[(0,0)]}{\mathbb{R}^2/\{x=0\}}$ for each $t\in \mathbb{R}$ 
  and all $\rightfunctor(p_t)(\frac{d}{dx})\in\righttangent{[(0,0)]}{\mathbb{R}^2/\{x=0\}}$ for each $t\in \mathbb{R}$ are linearly independent respectively, 
  where $\frac{d}{dx}$ is one of the non trivial elements of $T_0(\mathbb{R})\cong \globalrighttangent{0}{\mathbb{R}}\cong \righttangent{0}{\mathbb{R}}$.
\end{proof}

\appendix
\section{Differential spaces and \frolicher spaces} \label{section differential and frolicher}

This appendix provides an overview of the definitions and properties of differential spaces and \frolicher spaces.  
The definitions and properties presented here are found in 
\watts, and \BKW, with the addition of 
\proref{proposition the limit of reflexive dflg}, which was not stated in \watts, and \BKW~explicitly. 

\begin{defi}[\watts, Definition 2.17] \label{definition differential space}
  Let $X$ be a set. A \italic{differential structure} on $X$ is a family of real-valued functions $\mathscr{F}_X$ on $X$, such that the following two axioms are satisfied:
  \begin{description}
    \item[Smooth compatibility] For any positive integer $k\in\mathbb{Z}$, functions $f_1,\dots,f_k\in\mathscr{F}_X$, and $F\in C^{\infty}(\mathbb{R}^k,\mathbb{R})$, 
    the composition $F(f_1,\dots,f_k)$ is in $\mathscr{F}_X$. 
    \item[Locality] Let $f\colon X\to\mathbb{R}$ be a function such that for any $x\in X$, there exists an $I$-open neighborhood $U\subset X$ of $x$ 
    and a function $g\in \mathscr{F}_X$ satisfying ${f|}_U={g|}_U$. Then $f\in \mathscr{F}_X$, 
  \end{description}
  where we equip $X$ with the initial topology of $\mathscr{F}_X$, called the \italic{$I$-topology} and we call open sets of the $I$-topology \italic{$I$-open sets}. 
  A \italic{differential space} is a set equipped with a differential structure. We usually write the underlying set $X$ to represent the differential space $(X,\mathscr{F}_X)$.
\end{defi}

\begin{defi}[\watts, Definition 2.20] \label{definition smooth map between differential spaces}
  Let $(X,\mathscr{F}_X)$ and $(Y,\mathscr{F}_Y)$ be two differential spaces. A map $f\colon X\to Y$ is said to be smooth if for every element $g\in\mathscr{F}_Y$, 
  the composition map $g\circ f$ is in $\mathscr{F}_X$. Differential spaces and smooth maps between them form a category denoted by $\dfrn$. We call an isomorphism of this category a \italic{diffeomorphism}. 
  The category $\dfrn$ contains the category $\mfd$ as a full subcategory as follows.
\end{defi}

\begin{ex} \label{example manifold as differential space}
  Let $M$ be a smooth manifold, and let $\mathscr{F}_M$ be the set of all smooth maps $M\to \mathbb{R}$. Then $(M, \mathscr{F}_M)$ is a differential space. 
  The set of all smooth maps between two manifolds coincide with the set of all smooth maps regarding the two manifolds as differential spaces. 
\end{ex}

We can also define the limit and colimit of any small diagram in $\dfrn$. See \watts, 2.2.

There are adjoint functors between the categories $\dflg$ and $\dfrn$.
\begin{defi}[\watts, Lemma 2.42 and Lemma 2.44] \label{definition adjoint between dflg and dfrn}
  Let $(X,\mathscr{D}_X)$ be a diffeological space. 
  We define the set 
  \[
    \Phi \mathscr{D}_X=\{f\colon X\to\mathbb{R}\mid \text{For any $p\in\mathscr{D}_X$, $f\circ p\in C^{\infty}(\plotdom{p},\mathbb{R})$.}\}.
  \]
  Then $(X,\Phi \mathscr{D}_X)$ is a differential space. If $(X,\mathscr{D}_X)$ and $(Y,\mathscr{D}_Y)$ are two diffeological spaces and $f\colon X\to Y$ is smooth as a map between diffeological spaces,  
  then the map $f$ is also smooth as a map between differential spaces $(X,\Phi\mathscr{D}_X)$ and $(Y,\Phi\mathscr{D}_Y)$. Therefore, we have the functor $\Phi\colon \dflg\to\dfrn$.
  
  Also, let $(X,\mathscr{F}_X)$ be a differential space. We define the set 
  \[
    \Pi \mathscr{F}_X=\{p\colon \plotdom{p}\to X\mid \text{$p$ is a parametrization of $X$ and for any $f\in\mathscr{F}_X$, $f\circ p\in C^{\infty}(\plotdom{p},\mathbb{R})$.}\}.
  \]
  Then $(X,\Pi\mathscr{F}_X)$ is a diffeological space. If $(X,\mathscr{F}_X)$ and $(Y,\mathscr{F}_Y)$ are two differential spaces and $f\colon X\to Y$ is smooth as a map between differential spaces,  
  then the map $f$ is also smooth as a map between differential spaces $(X,\Pi\mathscr{F}_X)$ and $(Y,\Pi\mathscr{F}_Y)$. Therefore, we have the functor $\Pi\colon \dfrn\to\dflg$.
  The functor $\Phi$ is the left adjoint functor of $\Pi$. 
\end{defi}

\begin{remark}{}{} \label{remark I-topology for dflg sp}
  If $X$ is a diffeological space, then the $I$-topology of $\Phi X$ is simply called the $I$-topology of $X$. 
  Also, if $X$ is a differential space, then the $D$-topology of $\Pi X$ is simply called the $D$-topology of $X$. 
\end{remark}

\begin{defi}[\BKW, Definition 2.6] \label{definition reflexive}
  A diffeology $\mathscr{D}$ is said to be \italic{reflexive} if $\mathscr{D}=\Pi\Phi\mathscr{D}$. A differential structure $\mathscr{F}$ is said to be reflexive if $\mathscr{F}=\Phi\Pi\mathscr{F}$.  
\end{defi}

\begin{prop}[\BKW, Proposition 2.7] \label{proposition reflexive stability}
  Let $X$ be a set. For any differential structure $\mathscr{F}$ on $X$, $\Pi(\mathscr{F})$ is a reflexive diffeology on $X$. 
  For any diffeology $\mathscr{D}$ on $X$, $\Phi(\mathscr{D})$ is a reflexive differential structure on $X$. 
\end{prop}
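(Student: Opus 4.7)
The plan is to extract both claims from the Galois-connection structure that $\Phi$ and $\Pi$ induce between families of parametrizations on $X$ and families of real-valued functions on $X$, each ordered by inclusion. The core identity is the symmetric biconditional
\[
  \mathscr{D} \subseteq \Pi(\mathscr{F}) \iff \mathscr{F} \subseteq \Phi(\mathscr{D}),
\]
which is valid for arbitrary families $\mathscr{D}$ of parametrizations and $\mathscr{F}$ of real-valued functions on $X$, since both sides unwind to the same statement: $f \circ p \in C^{\infty}(\plotdom{p}, \mathbb{R})$ for every $p \in \mathscr{D}$ and every $f \in \mathscr{F}$.

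Two immediate consequences I would record first. Specializing $\mathscr{F} := \Phi(\mathscr{D})$ in the biconditional gives the unit $\mathscr{D} \subseteq \Pi\Phi(\mathscr{D})$, and specializing $\mathscr{D} := \Pi(\mathscr{F})$ gives $\mathscr{F} \subseteq \Phi\Pi(\mathscr{F})$. Moreover, both $\Phi$ and $\Pi$ are inclusion-reversing, because enlarging the input only adds further test conditions against which the output must be smooth.

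The proposition then follows by the standard formal calculation. For the first assertion, I would apply $\Pi$ to the unit $\mathscr{F} \subseteq \Phi\Pi(\mathscr{F})$ and use that $\Pi$ is inclusion-reversing to obtain $\Pi\Phi\Pi(\mathscr{F}) \subseteq \Pi(\mathscr{F})$; the reverse inclusion $\Pi(\mathscr{F}) \subseteq \Pi\Phi(\Pi(\mathscr{F}))$ is just the unit applied to the diffeology $\Pi(\mathscr{F})$ itself. Combining these gives $\Pi\Phi\Pi(\mathscr{F}) = \Pi(\mathscr{F})$, which is exactly the reflexivity of $\Pi(\mathscr{F})$. The second assertion is dual: apply $\Phi$ to $\mathscr{D} \subseteq \Pi\Phi(\mathscr{D})$ to obtain $\Phi\Pi\Phi(\mathscr{D}) \subseteq \Phi(\mathscr{D})$, and combine with the unit applied to $\Phi(\mathscr{D})$ for the reverse inclusion.

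The closest thing to a genuine obstacle is justifying the Galois biconditional for subsets that are not yet known to satisfy the diffeology or differential-structure axioms, since the formal chase above iterates $\Phi$ and $\Pi$ at intermediate stages. This is however immediate from the definitions: both operations depend only on the pointwise condition $f \circ p \in C^{\infty}$ and make sense on arbitrary families, and the closure axioms are automatic byproducts of the universal quantifiers, as already implicit in the construction after \defref{definition adjoint between dflg and dfrn}. Once this is granted, the argument uses no geometric information about $X$ at all.
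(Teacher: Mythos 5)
Your argument is correct. The paper itself gives no proof of this proposition---it is quoted from Batubenge--Karshon--Watts \BKW, Proposition 2.7---and your Galois-connection derivation (the biconditional $\mathscr{D}\subseteq\Pi(\mathscr{F})\iff\mathscr{F}\subseteq\Phi(\mathscr{D})$, the two units, antitonicity, and the resulting identities $\Pi\Phi\Pi=\Pi$ and $\Phi\Pi\Phi=\Phi$) is exactly the standard argument underlying that reference, so there is nothing to add.
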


\begin{prop}[\BKW, Theorem 2.13] \label{proposition reflexive as frolicher}
  The full subcategory of $\dflg$ whose objects are all reflexive diffeological spaces and the full subcategory of $\dfrn$ whose objects are all reflexive differential spaces 
  are category equivalent to the category of \frolicher space and smooth maps between them, defined in \BKW, Definition 2.12.
\end{prop}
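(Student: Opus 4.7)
The plan is to break the equivalence into two parts. First, I would show the equivalence between the full subcategories of reflexive diffeological spaces and reflexive differential spaces by restricting the adjunction $\Phi \dashv \Pi$ from \defref{definition adjoint between dflg and dfrn}. Second, I would identify reflexive diffeological spaces with \frolicher spaces directly by exhibiting mutually inverse functors.

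For the first part, the key observation is a general categorical fact: any adjunction $F \dashv G$ with unit $\eta$ and counit $\varepsilon$ restricts to an equivalence between the full subcategory of objects at which $\eta$ is an isomorphism and the full subcategory of objects at which $\varepsilon$ is an isomorphism. Applied to $\Phi \dashv \Pi$, a diffeological space $(X,\mathscr{D})$ is reflexive (\defref{definition reflexive}) precisely when $\eta_{(X,\mathscr{D})}$ is the identity, and analogously on the differential-space side. Combined with \proref{proposition reflexive stability}, which guarantees that the images of $\Phi$ and $\Pi$ always land in reflexive objects, this yields the desired equivalence between the two reflexive full subcategories.

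For the second part, I would recall that a \frolicher space is a triple $(X, \mathscr{C}, \mathscr{F})$ consisting of a family $\mathscr{C}$ of curves $\mathbb{R}\to X$ and a family $\mathscr{F}$ of functions $X\to\mathbb{R}$ that are mutually saturated: a curve $c$ lies in $\mathscr{C}$ iff $f\circ c\in C^{\infty}(\mathbb{R},\mathbb{R})$ for every $f\in\mathscr{F}$, and a function $f$ lies in $\mathscr{F}$ iff $f\circ c\in C^{\infty}(\mathbb{R},\mathbb{R})$ for every $c\in\mathscr{C}$. From a reflexive diffeological space $(X,\mathscr{D})$, I would extract the \frolicher structure whose curves are the $1$-plots of $\mathscr{D}$ and whose functions are $\Phi\mathscr{D}$; conversely, from a \frolicher space $(X,\mathscr{C},\mathscr{F})$ I would produce the diffeological space $(X,\Pi\mathscr{F})$, which is reflexive by \proref{proposition reflexive stability}. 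Starting from $(X,\mathscr{D})$, the round-trip returns $(X,\Pi\Phi\mathscr{D}) = (X,\mathscr{D})$ by reflexivity, and naturality under smooth maps is formal on both sides.

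The main obstacle will be verifying the saturation conditions and closing the other round-trip. Specifically, one must show that whenever $f\colon X\to\mathbb{R}$ composes smoothly with every $1$-plot of $\mathscr{D}$, one has $f\in\Phi\mathscr{D}$, i.e., $f\circ p$ is smooth for every plot $p\colon U\to X$. After composing with such a $p$, this reduces to the classical Boman theorem that a function on an open subset of a Euclidean space is smooth iff its composition with every smooth curve is smooth. Once this is in place, the recovery of $\mathscr{C}$ as the set of $1$-plots of $\Pi\mathscr{F}$ is immediate from the definition of $\Pi$, and the two functors between reflexive diffeological spaces and \frolicher spaces are seen to be mutually inverse, completing the chain of equivalences.
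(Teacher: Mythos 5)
The paper does not actually prove this proposition: it is imported verbatim from Batubenge--Karshon--Watts (their Theorem 2.13), so there is no in-paper argument to compare yours against. Judged on its own, your outline is essentially the standard proof from that source and is sound. Restricting the adjunction $\Phi\dashv\Pi$ to the objects at which the unit (respectively counit) is an isomorphism is the right mechanism for the first equivalence; since all the structure maps here are identities on underlying sets, reflexivity in the sense of \defref{definition reflexive} is indeed the same as the unit or counit being an isomorphism, and \proref{proposition reflexive stability} guarantees that the restricted functors land in the reflexive subcategories. Your identification of Boman's theorem as the analytic crux of the second equivalence is also correct; that is precisely the nontrivial input. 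Two points to tighten. First, the curves of a Fr\"{o}licher space are maps defined on all of $\mathbb{R}$, whereas $1$-plots have domains that are arbitrary open subsets of $\mathbb{R}$; you should either take $\mathscr{C}$ to be the plots with domain exactly $\mathbb{R}$, or note that the two detection conditions agree because smoothness is local and every point of an open subset of $\mathbb{R}$ has a neighborhood diffeomorphic to $\mathbb{R}$, with precomposition preserving plots. Second, the claim that naturality and the morphism comparison are ``formal'' hides a real (if short) verification: a set map $f$ between reflexive diffeological spaces is smooth if and only if it pulls back $\Phi$-functions to $\Phi$-functions, and the ``if'' direction uses reflexivity of the source via the same unwinding as your round-trip computation ($g\circ f\circ p$ smooth for all plots $p$ and all $g$ in the target's function family forces $f\circ p\in\Pi\Phi\mathscr{D}=\mathscr{D}$). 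With those two lines added, the chain of equivalences closes as you describe.
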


Manifolds equipped with the standard diffeology are examples of reflexive diffeological spaces. 
Also, the following proposition holds. 

\begin{prop} \label{proposition the limit of reflexive dflg}
  We denote the diffeology of a diffeological space $X$ by $\mathscr{D}_{X}$. The following two properties hold:
  \begin{description}
    \item[1] Let $\{X_{\alpha}\}_{\alpha\in A}$ be a family of reflexive diffeological spaces. 
    Then the product diffeological space $\prod_{\alpha\in A} X_{\alpha}$ is reflexive. 
    \item[2] Let $X$ be a diffeological space, and let $Y$ be a subset of $X$. If we equip $Y$ with the subdiffeology of $X$, then $Y$ is reflexive. 
  \end{description}
\end{prop}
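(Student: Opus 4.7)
The plan is to exploit the fact that $\mathscr{D}_Z \subseteq \Pi\Phi\mathscr{D}_Z$ holds for every diffeological space $Z$ (immediate from the definitions: any plot $p\in \mathscr{D}_Z$ satisfies $f\circ p \in C^{\infty}$ for every $f\in \Phi\mathscr{D}_Z$), so in both parts the real work is to establish the reverse inclusion, and this I will do by transporting membership in $\Pi\Phi$ through the structure maps down to the reflexive building blocks.

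For (1), I take an arbitrary $p\in \Pi\Phi\mathscr{D}_X$ where $X=\prod_{\alpha} X_\alpha$, and aim to show each component $p_\alpha = \pi_\alpha \circ p$ is a plot of $X_\alpha$; the universal property of the product diffeology then yields $p\in \mathscr{D}_X$. By reflexivity of $X_\alpha$ it suffices to verify $p_\alpha \in \Pi\Phi\mathscr{D}_{X_\alpha}$, so pick $f\in \Phi\mathscr{D}_{X_\alpha}$; the key intermediate step is to see that $f\circ \pi_\alpha \in \Phi\mathscr{D}_X$, which follows because any plot $q$ of the product satisfies $\pi_\alpha \circ q \in \mathscr{D}_{X_\alpha}$, hence $f\circ \pi_\alpha \circ q$ is smooth. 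Applying the hypothesis on $p$ then gives smoothness of $f \circ p_\alpha$, as wanted.

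For (2), I read the statement under the (implicit but necessary) assumption that $X$ itself is reflexive, since otherwise taking $Y=X$ would force the claim to fail whenever $X$ is not already reflexive. Writing $i\colon Y\hookrightarrow X$ for the inclusion and starting with $p\in \Pi\Phi\mathscr{D}_Y$, the goal is $i\circ p \in \mathscr{D}_X$; by reflexivity of $X$ this reduces to $i\circ p \in \Pi\Phi\mathscr{D}_X$. Given $g\in \Phi\mathscr{D}_X$, the crucial observation is that $g\circ i \in \Phi\mathscr{D}_Y$, which holds because the plots of the subdiffeology are precisely those $q$ with $i\circ q \in \mathscr{D}_X$, so $(g\circ i)\circ q$ is smooth. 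Applying the hypothesis on $p$ to $g\circ i$ then concludes the argument.

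Both of the key intermediate checks are direct unwindings of the definitions of the product diffeology and of the subdiffeology, so the whole proof is really just careful diagram-chasing with the adjunction $\Phi \dashv \Pi$ in mind. The only subtle point is properly parsing part (2): it must be read with the hypothesis that $X$ is reflexive, and identifying this as an implicit assumption is the main thing to be careful about; once that is settled, there is no serious obstacle in the argument itself.
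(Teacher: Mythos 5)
Your proof is correct and follows essentially the same route as the paper: in both parts you reduce the problem to showing that the structure map ($\pi_\alpha$ or $i$) carries $\Pi\Phi$-plots to $\Pi\Phi$-plots, which is exactly the ``functoriality of $\Pi\Phi$'' step the paper invokes, just unwound explicitly. Your reading of part (2) as requiring $X$ to be reflexive is also correct --- the paper's own proof says ``Since $X$ is reflexive'' even though that hypothesis is missing from the statement, and the case $Y=X$ shows it cannot be dropped.
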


\begin{proof}
We first prove 1. It is enough to show that each plot $p\in \Pi \Phi(\mathscr{D}_{\prod_{\alpha\in A} X_{\alpha}})$ is also in $\mathscr{D}_{\prod_{\alpha\in A} X_{\alpha}}$. 
To prove this, we show that for any $\alpha\in A$, the composition $p_{\alpha}\circ p$ is in $\mathscr{D}_{X_{\alpha}}$, where $p_{\alpha}\colon {\prod_{\alpha\in A} X_{\alpha}}\to X_{\alpha}$ is the natural projection. 
By the functionality of $\Pi \Phi$, the map $p_{\alpha}\colon (\prod_{\alpha\in A} X_{\alpha},\Pi \Phi(\mathscr{D}_{\prod_{\alpha\in A} X_{\alpha}}))\to (X_{\alpha},\Pi \Phi\mathscr{D}_{X_{\alpha}})$ is smooth. 
This shows that $p_{\alpha}\circ p$ is in $\Pi \Phi\mathscr{D}_{X_{\alpha}}$. Since $X_{\alpha}$ is reflexive, $p_{\alpha}\circ p$ is in $\mathscr{D}_{X_{\alpha}}$. 
Therefore, $p$ is in $\mathscr{D}_{\prod_{\alpha\in A} X_{\alpha}}$. 

Second, we prove 2. It is enough to show that each plot $p\in \Pi \Phi\mathscr{D}_{Y}$ is also in $\mathscr{D}_Y$. 
To prove this, we prove that $i\circ p$ is in $\mathscr{D}_X$, where $i\colon Y\to X$ is the natural inclusion. 
In the same manner as the proof of 1, we have $i\circ p\in \Pi \Phi(\mathscr{D}_{X})$. Since $X$ is reflexive, this means that $i\circ p\in \mathscr{D}_X$. 
\end{proof}

\begin{remark}{}{}
  We can also prove that any limit of reflexive diffeological spaces as diffeological spaces are reflexive diffeological space in the same manner as this proposition. 
  By this proposition, both $[0,\infty)$ with the subdiffeology from $\mathbb{R}$ and $\crosssub$ in \exref{example crosssub internal tangent} are reflexive diffeological spaces.
  
  However, quotient diffeological spaces or sum diffeological spaces may not be reflexive. 
  For example, neither $\mathbb{R}/O(n)$ with the quotient diffeology from $\mathbb{R}$ nor $\crossquot$ in \exref{example definition cross_quot generating family} is a reflexive diffeological space. 
  Indeed, $\Pi \Phi(\mathbb{R}/O(n))=[0,\infty)$ for any $n\in \mathbb{Z}_{\geq 0}$ and $\Pi \Phi(\crossquot)=\crosssub$ hold (\BKW, Example 4.1, Example 5.1). 
  We have already shown that $\mathscr{D}_{\mathbb{R}/O(n)}\subsetneq \mathscr{D}_{[0,\infty)}$ and $\mathscr{D}_{\crossquot}\subsetneq\mathscr{D}_{\crosssub}$ 
  in \remref{remark half line is not diffeo to orthogonal quotient} and \remref{remark crosssub is not diffeo to crossquot}. 
  On the other hand, if we take a colimit of \frolicher spaces regarding them as differential spaces, then the colimit differential space is also a reflexive differential space. 
  The proof of this claim is the same as that of \proref{proposition the limit of reflexive dflg}.
\end{remark}

\end{document}